\theoremstyle{plain}
\newtheorem{theorem}{Theorem}[section]
\newtheorem*{theorem*}{Theorem}
\newtheorem{proposition}[theorem]{Proposition}
\newtheorem{lemma}[theorem]{Lemma}
\newtheorem{corollary}[theorem]{Corollary}
\theoremstyle{definition}
\newtheorem{definition}[theorem]{Definition}
\newtheorem{notation}[theorem]{Notation}
\newtheorem{remark}[theorem]{Remark}
\newtheorem{example}[theorem]{Example}
\newcommand{\R}{\mathbb{R}}
\newcommand{\Z}{\mathbb{Z}}
\newcommand{\RC}[1]{\mathcal{R}_\vec{0}(#1)}
\newcommand{\Rh}{\mathcal{R}_h}
\newcommand{\RS}[1]{\mathcal{R}(#1)}
\renewcommand{\NG}[1]{\mathrm{NG}(#1)}
\newcommand{\PGL}[1]{\mathrm{PGL}(\R,#1)}
\renewcommand{\vec}[1]{\mathbf{#1}}
\renewcommand{\v}{\vec{v}}
\renewcommand{\u}{\vec{u}}
\renewcommand{\a}{\vec{a}}
\renewcommand{\b}{\vec{b}}
\newcommand{\w}{\vec{w}}
\newcommand{\x}{\vec{x}}
\newcommand{\y}{\vec{y}}
\newcommand{\twentyfourcell}{C_4^{(24)}}
\DeclareMathOperator{\conv}{conv}
\DeclareMathOperator{\rank}{rank}
\DeclareMathOperator{\pyr}{pyr}
\DeclareMathOperator{\bipyr}{bipyr}
\DeclarePairedDelimiter\ceil{\lceil}{\rceil}
\DeclarePairedDelimiter\floor{\lfloor}{\rfloor}
\newcommand{\wh}{\widehat}
\title{On the Dimensions of the Realization Spaces of Polytopes}
\author{Laith Rastanawi\thanks{Supported by the DFG Research Training Group GRK 2434 “Facets of Complexity”}\\
Institut f\"ur Mathematik\\Freie Universität Berlin\\Arnimallee 2\\14195 Berlin, Germany\\
\url{laith.rastanawi@fu-berlin.de}
\and
Rainer Sinn\\
Institut f\"ur Mathematik\\Freie Universität Berlin\\Arnimallee 2\\14195 Berlin, Germany\\
\url{rainer.sinn@fu-berlin.de }
\and
G\"unter M.~Ziegler\setcounter{footnote}{6}\thanks{Research supported by the DFG Collaborative Research Center TRR~109 ``Discretization in Geometry and Dynamics''}\\
Institut f\"ur Mathematik\\Freie Universität Berlin\\Arnimallee 2\\14195 Berlin, Germany\\
\url{ziegler@math.fu-berlin.de}}
\date{July 1, 2020}
\begin{document}
\maketitle

\begin{abstract} 
Robertson (1988) suggested a model for the realization space of a convex $d$-dimensional polytope
and an approach via the implicit function theorem, which -- in the case of a full rank Jacobian -- proves that the realization space is a manifold of dimension $\NG{P}:=d(f_0+f_{d-1})-f_{0,d-1}$, which is the natural guess for the dimension given by the number of variables minus the number of quadratic equations that are used in the definition of the realization space.
While this indeed holds for many natural classes of polytopes (including simple and simplicial polytopes, as well as 
all polytopes of dimension at most~$3$),
and Robertson claimed this to be true for all polytopes,
Mn\"ev's (1986/1988) Universality Theorem implies that it is not true in general:
Indeed, (1) the centered realization space is not a smoothly embedded manifold in general, and (2) it does not have the dimension $\NG{P}$ in general.

In this paper we develop Jacobian criteria for the analysis of realization spaces. From these we get easily that for various large and natural classes of polytopes the realization spaces are indeed manifolds, whose dimensions are given by $\NG{P}$. 
However, we also identify the smallest polytopes where the dimension count (2) and thus Robertson's claim fails,
among them the bipyramid over a triangular prism.
For the property (1), we analyze the classical 24-cell:
We show that the realization space has at least the dimension $\NG{\twentyfourcell}=48$,
and it has points where it is a manifold of this dimension, but it is not smoothly embedded as a manifold everywhere.
\end{abstract}

\section{Introduction} \label{sec:introduction}
The study of geometric realizations of convex polytopes goes back to Legendre in 1794, who asked the following question \cite[p.~309]{Legendre1794}:
\emph{How many variables are needed to determine a geometric realization of a given (combinatorial type of a) polytope?}
In modern terms, Legendre asks for the dimension of the space of geometric realizations of a polytope $P$, 
i.e.~the space of all choices of coordinates for the vertices of~$P$ that lead to a polytope with the same (isomorphic) face lattice.

The case of polygons is straightforward: The number of parameters is two times the number of vertices,
which we would now write as $2f_0=2f_1$. The first major step is due to Legendre himself \cite[Note~VIII]{Legendre1794} and Steinitz \cite[Sec.~69]{Steinitz1976}, who settled the question in dimension $3$, where the number of variables turns out to be the number of edges plus $6$.
So the Legendre--Steinitz Theorem says that the realization space of any $3$-dimensional polytope $P$ is a manifold of dimension $f_1(P)+6$.

It is natural to ask Legendre's question for $d$-dimensional polytopes ($d$-polytopes, for short).
An answer was given by Robertson \cite[Theorem, p.~18]{robertson1984} in 1984: 
\emph{The realization space of any $d$-polytope is a smooth submanifold of $\R^{d(f_0+f_{d-1})}$ 
of dimension $d(f_0+f_{d-1})-\mu$},
where $f_0=f_0(P)$ is the number of vertices and $f_{d-1}=f_{d-1}(P)$ is the number of facets of~$P$,
and $\mu=\mu(P)=f_{0,d-1}(P)$ is the number of vertex-facet incidences.
For the proof of his claim, Robertson represented the realization space as an open subset (defined by strict quadratic inequalities)
of the solution set of $\mu$ quadratic equations in $\R^{d(f_0+f_{d-1})}$.
In such a setting, it is natural to expect that the solution set has the dimension
given by the number of variables minus the number of equations,
and Robertson's proof is then “built round a simple application of the implicit function theorem” \cite[p.~18]{robertson1984}.

With the Euler--Poincaré equation it is not hard to check that Robertson's claim agrees with what we know in dimensions $d=1,2,3$.
For simplicial and for simple polytopes, the realization spaces can be seen as open subspaces of $\R^{df_0}$ resp.\ $\R^{df_{d-1}}$ (see below).
However, Mn\"ev's Universality Theorem for polytopes from 1986/1988
(Mn\"ev \cite{Mnev1988}, see also Richter-Gebert \cite{Jurgen1996}, as well as the exposition in \cite{Z45}
and Mn\"ev's web page \url{http://www.pdmi.ras.ru/~mnev/bhu.html})
implies that the realization space is not in general a manifold: For any semi-algebraic set defined over the integers
one can construct a polytope whose realization space modulo affine transformations is equivalent to $M$ up to
certain trivial fibrations. This implies that realization spaces can have very complicated topology (locally, as well as globally),
so realization spaces are not manifolds in general,
but it does not have immediate implications on the dimension of the realization space.

So it appears that Robertson's claimed theorem is true for the polytopes that we see occurring “in nature”,
but it is false for very special examples that arise by complicated constructions
in the proofs of Mn\"ev, Richter-Gebert, et al.

In this paper we start from Robertson's work.
The model for the realization space suggested by him, formalized as the \emph{centered realization space} by Adiprasito \& Ziegler \cite{Z130,ZA2011}
(see Definition~\ref{def:RC}) is indeed very natural and convenient,
as it is \emph{by definition} a semi-algebraic set (indeed, an open subset of a real-algebraic variety), so topology and metric are clear
and the dimension is well-defined, but also the implicit function theorem is directly applicable in the way that Robertson set it up.
From this we get 
\begin{compactitem}
	\item a natural and rather general sufficient criterion for the validity of Robertson's claim (see Section~\ref{sec:jacobian_and_degeneracy}),
    \item a very simple and natural proof (see {\cite[Lemma 2.8]{BDH2008}}) for the Legendre--Steinitz theorem for $3$-dimensional polytopes (Corollary \ref{cor:3polytopes}), as well as
	\item a natural tool for the analysis of other classes, with positive as well as negative results.
\end{compactitem}

Our next step is the search for counter-examples (Section~\ref{sec:4dim}), where we identify the unique three smallest counter-examples to Robertson's claim, for which the realization space is still a smooth manifold, but its dimension is \emph{not} given by the “natural guess” of “number of variables minus number of equations.” One of these is very simple: The bipyramid over a triangular prism (Section~\ref{subsec:three_smallest_counter-examples}).

Finally, we go for an iconic object in polytope theory, the 24-cell.
We know from Paffenholz' thesis \cite{Paffenholz-diss} that the 24-cell is not projectively unique, but its realization space (in our model) has dimension at least 28, which is the dimension of the group of projective transformations on~$\R^4$ plus~$4$.
We construct new classes of realizations of the 24-cell, and from these we derive in Section~\ref{sec:4dim} that
\begin{compactitem}
	\item the realization space (in our model) has dimension at least $48$, which is the dimension of the group
  of projective transformations on $\R^4$ plus $24$, and indeed this is the “natural guess” dimension
  predicted by Robertson's claim;
	\item indeed, there are points in the realization space where locally the realization space is a manifold of dimension~$48$
	(Corollary \ref{cor:smoothmanifold24}),
	\item but there are also points in the realization space (such as those given by
	realizations as a regular polytope)
	where the realization space is \emph{not} a smooth submanifold of the ambient space $\R^{d(f_0+f_{d-1})}=\R^{192}$
	(Theorem~\ref{thm:c24notsmooth}).
\end{compactitem}
We doubt that the realization space is a topological manifold, and 
indeed we are not sure that it is pure (has the same local dimension everywhere), but this is left open: The 24-cell keeps some of its mystery.

\section{Realization Spaces of Polytopes} \label{sec:background}

For general facts about polytopes, we refer to \cite{Ziegler1995}.
Let $P \subset \R^d$ be a $d$-dimensional polytope (or $d$-polytope, for short).
We write $f_i(P)$ (or simply $f_i$, if the polytope is clear from the context) for the total number of faces of~$P$ that have dimension $i$.
In particular, $f_0(P)$ is the number of vertices and $f_{d-1}(P)$ is the number of facets.
We write $f_{0,d-1}(P)$ for the number of vertex-facet incidences.
We call a polytope $P\subset\R^d$ \emph{centered} if it contains the origin $\vec{0}$ in its interior.
We can represent every polytope as the convex hull $P = \conv(V)$ of its vertices,
where $V$ is a $(d \times f_0)$-matrix and $\conv(V)$ is the convex hull of the columns of $V$.
By rescaling the facet defining inequalities, we can represent every centered polytope as the intersection of half-spaces
\[
	P = \{ \x \in \R^d \mid A^t \vec{x} \leq \vec{1} \},
\]
where $A$ is a $(d \times f_{d-1})$-matrix.
In these two representations, the matrices $V$ and $A$ are unique up to column permutations.
From now on, we label the vertices and facets to make these matrices unique.
In this case, we say that $P$ is \emph{labeled}, and we call $(V,A)$ the \emph{combined vertex and facet description} of~$P$.

A labeled $d$-polytope $Q \subset \R^d$ is said to \emph{realize} $P$ if there exists an isomorphism between the face lattices of~$P$ and $Q$
that respects the labeling of their vertices and facets.
If $Q$ was centered, we say that $Q$ is a \emph{centered realization} of~$P$.

We now define our model for the realization space of a polytope.

\begin{definition}[Centered realization space {\cite{Z130,ZA2011}}] \label{def:RC}
    Let $P$ be a labeled $d$-polytope with $n$ vertices and $m$ facets.
    The \emph{centered realization space} of~$P$ is the set
    \[
	\RC{P} := \left\{ (V,A) \in \R^{d \times (n+m)} \mid \conv(V) = \{\x \in \R^d \mid A^t\x \leq \vec{1}\} \text{ realizes $P$} \right\}.
	\]
    That is, $\RC{P}$ is the set of combined vertex and facet descriptions of centered realizations of~$P$.
\end{definition}

The next proposition shows that the centered realization space has a nice description as a (basic) semi-algebraic set (over $\Z$),
that is, the solution set of a system of polynomial equations and inequalities. In particular, its dimension is well-defined.

\begin{proposition} \label{prop:semialgebraic_desc}
    Let $P$ be a $d$-polytope with vertices $\v_1, \cdots, \v_n$ and facets $F_1, \cdots, F_m$.
    The centered realization space of~$P$ is equal to the set
    \begin{align*}
        \RC{P} = \left\{
        (W,B) =
        (\w_1, \cdots, \w_n, \b_1, \cdots, \b_m) \in \R^{d \times (n+m)}
		\quad\middle|\quad
		 \w_i^t \b_j
		 \Big\{
        \begin{array}{ll}
            =1 & \textrm{ if } \v_i \in F_j    \\[-1mm]
            <1 & \textrm{ if } \v_i \not\in F_j
        \end{array}
        \right\}.
    \end{align*}
\end{proposition}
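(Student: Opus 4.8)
The plan is to prove the two set inclusions separately. Throughout, write $H_j = \{\x \in \R^d : \b_j^t \x = 1\}$ for the hyperplane attached to the $j$-th column of $B$, and $Q = \{\x : B^t \x \le \vec{1}\}$ for the candidate half-space polytope.

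For the inclusion ``$\subseteq$'' (a realization satisfies the incidence conditions), suppose $(W,B) \in \RC{P}$, so $Q = \conv(W)$ realizes $P$ via a label-respecting isomorphism of face lattices. Then the columns $\w_i$ are exactly the vertices of $Q$, with $\w_i$ labeled by $\v_i$, and the inequalities $\b_j^t \x \le 1$ are the facet-defining inequalities, with the facet $Q \cap H_j$ labeled by $F_j$. Since $\w_i \in Q$ we always have $\b_j^t \w_i \le 1$, and because $Q \cap H_j$ consists of the points of $Q$ lying on $H_j$, the vertex $\w_i$ lies on the facet labeled $F_j$ precisely when $\b_j^t \w_i = 1$. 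As the isomorphism respects labels, this happens iff $\v_i \in F_j$; in the complementary case $\b_j^t \w_i \ne 1$ forces $\b_j^t \w_i < 1$. This is exactly the asserted system of (in)equalities.

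For the reverse inclusion, assume $(W,B)$ satisfies the incidence conditions; I must show that $Q = \conv(W)$, that this is a $d$-polytope, and that it realizes $P$. First, each $\w_i$ satisfies $B^t \w_i \le \vec{1}$ (with equality in coordinate $j$ iff $\v_i \in F_j$), so $\conv(W) \subseteq Q$. Next I would record two rigidity facts that hold because $P$ is a genuine polytope: distinct vertices of $P$ lie on distinct sets of facets, and distinct facets contain distinct sets of vertices (each vertex is the intersection of the facets through it, and dually for facets). Transported through the (in)equalities, these show that the points $\w_i$ are pairwise distinct and that the hyperplanes $H_j$ are pairwise distinct. The crucial step is then to show that $Q$ is bounded with $Q = \conv(W)$, and that the $\w_i$ and $H_j$ are exactly its vertices and facets: here the strict inequalities do the work, since for each $j$ every $\w_i$ with $\v_i \notin F_j$ lies strictly on one side of $H_j$, which pins down the correct orientation of the half-space $\{\b_j^t \x \le 1\}$ and rules out the degenerate or unbounded configurations. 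Once $Q$ is known to be a $d$-polytope with vertex set $\{\w_i\}$, facet set $\{H_j \cap Q\}$, and vertex-facet incidences matching those of $P$, I conclude by the standard fact that the face lattice of a polytope is determined by its vertex-facet incidence data: the induced bijection on vertices and facets extends to a label-respecting isomorphism of face lattices, so $Q$ realizes $P$ and $(W,B) \in \RC{P}$.

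The main obstacle is this reverse direction, and within it the verification that the incidence conditions alone (the equalities together with the strict inequalities) force $Q = \conv(W)$ to be a $d$-polytope whose vertices and facets are exactly the prescribed $\w_i$ and $H_j$, with no redundancies and no spurious incidences. I expect this to require either a careful orientation-and-boundedness argument driven by the strict inequalities as sketched above, or an induction on the dimension $d$ that realizes each facet $F_j$ inside its hyperplane $H_j$ (using that the incidence pattern of $(W,B)$ restricts to the correct vertex--ridge incidences of the $(d-1)$-polytope $F_j$) and then assembles these facet realizations into $Q$.
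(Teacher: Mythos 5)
Your plan follows the same route as the paper's proof: the ``$\subseteq$'' inclusion is routine, and the ``$\supseteq$'' inclusion is reduced to showing that $Q=\conv(W)=\{\x : B^t\x\le\vec{1}\}$ is a $d$-polytope whose vertices are exactly the $\w_i$, whose facets are exactly the $Q\cap H_j$, and whose incidences match those of $P$, after which one cites the fact that the vertex--facet incidence data determine the face lattice (Ziegler, Exercise 2.7) --- precisely the reference the paper uses. The difference is one of completion rather than of strategy: the paper compresses the geometric core into the single assertion that the $H_j$ are facet-defining hyperplanes of $\conv(W)$, while you correctly isolate that core as ``the main obstacle'' and then stop, naming two possible strategies (orientation-and-boundedness, or induction on $d$) without carrying either out. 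Since that one claim carries the entire content of the proposition, the proposal as written has a genuine gap; it is only fair to add that the paper's own two-sentence proof asserts the same claim without argument.

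To close the gap you do not need induction or a boundedness analysis; the combinatorics of $P$ can be injected directly. (a) Each $\w_i$ is a vertex of $\conv(W)$: if $\w_i=\sum_{k\ne i}\lambda_k\w_k$ with $\lambda_k\ge0$, $\sum_k\lambda_k=1$, then pairing with $\b_j$ for any facet $F_j\ni\v_i$ gives $1=\sum_k\lambda_k\,\w_k^t\b_j\le1$, which forces $\lambda_k=0$ unless $\v_k\in F_j$; since $\bigcap_{F_j\ni\v_i}F_j=\{\v_i\}$ in $P$, all $\lambda_k$ vanish, a contradiction. Combined with your distinctness observation this pins down the vertex set of $Q$. (b) $Q$ is $d$-dimensional: choosing vertices and facets along a complete flag of $P$ produces a $(d+1)\times(d+1)$ submatrix of $\vec{1}\vec{1}^t-W^tB$ that is triangular with nonzero diagonal, so $\rank(\vec{1}\vec{1}^t-W^tB)\ge d+1$ and hence the $\w_i$ affinely span $\R^d$ (dually, so do the $\b_j$). (c) Each $Q\cap H_j$ is then a proper exposed face with vertex set $\{\w_i:\v_i\in F_j\}$, and the dual of (a) (no facet's vertex set is contained in another's) shows these faces are distinct; but one must still argue that $Q$ has \emph{no facets other than} these $m$ faces, equivalently that $\conv(W)$ really equals $\{\x : B^t\x\le\vec{1}\}$. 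This last point is where the strict inequalities and a polarity argument on the configuration $\conv(\b_1,\dots,\b_m)$ are genuinely needed, and it is the step that neither your proposal nor the paper's proof actually writes out. Only after (a)--(c) is the appeal to Exercise 2.7 legitimate.
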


\begin{proof}
    The equality $\conv(W) = \{ \x \in \R^d \mid B^t \x \leq \vec{1}\}$ holds since the hyperplanes
    $\{ \x \in \R^d \mid \x^t \b_j = 1\}$ are facet-defining hyperplanes for $\conv(W)$.
    The polytope $\conv(W)$ realizes $P$
    because it has the same vertex-facet incidence structure as $P$
    (this determines the face lattice of the polytope, see \cite[Exercise 2.7]{Ziegler1995})
    with the correct labels.
\end{proof}

Our model of the realization space behaves nicely with respect to duality.
Recall that the \emph{polar polytope} $P^\Delta$ of a labeled centered $d$-polytope $P$ is the polytope
\[
	P^\Delta := \{ \y \in \R^d \mid \y^t \x  \leq 1 \textrm{ for all } \x \in P \}.
\]
There is an inclusion-reversing bijection between the face lattices of~$P$ and $P^\Delta$.
This bijection maps the vertices (resp.\ the facets) of~$P$ onto the facets (resp.\ the vertices) of $P^\Delta$.
We assume that the vertices (resp.\ the facets) of $P^\Delta$ are labeled with the labels of the facets (resp.\ the vertices) of~$P$ induced from the bijection.

\begin{proposition}
    For every labeled centered $d$-polytope $P$ we have $\RC{P} \cong \RC{P^\Delta}$,
	where the isomorphism is given by the permutation $(V,A)\mapsto (A,V)$.
    In particular, $\dim \RC{P} = \dim \RC{P^\Delta}$.
\end{proposition}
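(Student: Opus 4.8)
The plan is to reduce everything to the explicit semi-algebraic description of Proposition~\ref{prop:semialgebraic_desc} and to check that the coordinate swap $(V,A)\mapsto(A,V)$ matches the defining (in)equalities of $\RC{P}$ with those of $\RC{P^\Delta}$ term by term. Since this swap is a linear permutation of the $d(n+m)$ coordinates, it is automatically a polynomial isomorphism with polynomial inverse (the swap again); the only thing requiring an argument is that it actually carries one defining system onto the other, which is a purely combinatorial bookkeeping statement about labels.

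First I would fix notation for the polar. Under the inclusion-reversing bijection $\phi$ between the face lattices of $P$ and $P^\Delta$, every vertex $\v_i$ of $P$ corresponds to a facet $\phi(\v_i)$ of $P^\Delta$ carrying the label $i$, and every facet $F_j$ of $P$ corresponds to a vertex $\phi(F_j)$ of $P^\Delta$ carrying the label $j$. The key combinatorial fact is that, because $\phi$ reverses inclusions, $\v_i\in F_j$ holds in $P$ if and only if $\phi(F_j)\subseteq\phi(\v_i)$ holds in $P^\Delta$; that is, the vertex of $P^\Delta$ labeled $j$ lies on the facet of $P^\Delta$ labeled $i$.

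Next I would write out $\RC{P^\Delta}$ via Proposition~\ref{prop:semialgebraic_desc}. A point of $\RC{P^\Delta}$ is a pair $(W',B')$ whose vertex columns $\w'_1,\dots,\w'_m$ are indexed by the facet labels $j$ of $P$ and whose facet columns $\b'_1,\dots,\b'_n$ are indexed by the vertex labels $i$ of $P$; the defining conditions are $(\w'_j)^t\b'_i=1$ exactly when the vertex labeled $j$ lies on the facet labeled $i$ in $P^\Delta$, and $(\w'_j)^t\b'_i<1$ otherwise. By the combinatorial fact above this is the same as: $(\w'_j)^t\b'_i=1$ iff $\v_i\in F_j$, and $<1$ otherwise. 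Now apply the swap to a point $(W,B)\in\RC{P}$, i.e.\ set $\w'_j:=\b_j$ and $\b'_i:=\w_i$. Since $(\w'_j)^t\b'_i=\b_j^t\w_i=\w_i^t\b_j$ (the product is a scalar), the system defining $\RC{P}$, namely $\w_i^t\b_j=1$ iff $\v_i\in F_j$ and $<1$ otherwise, is carried precisely onto the system defining $\RC{P^\Delta}$.

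Since the swap and its inverse are polynomial and match the two defining systems, they restrict to mutually inverse maps $\RC{P}\to\RC{P^\Delta}$, which gives the claimed isomorphism of semi-algebraic sets, and in particular $\dim\RC{P}=\dim\RC{P^\Delta}$. I expect the only delicate point to be the label bookkeeping in the third paragraph: keeping straight which index set is swapped, and noting that the symmetry $\w_i^t\b_j=\b_j^t\w_i$ of the scalar product is exactly what lets the equalities and strict inequalities line up. Everything else is formal.
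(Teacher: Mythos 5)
Your proof is correct and is exactly the argument the authors intend: the paper states this proposition without proof, treating it as immediate from the semi-algebraic description of Proposition~\ref{prop:semialgebraic_desc}, the symmetry $\w_i^t\b_j=\b_j^t\w_i$, and the labeling convention for $P^\Delta$. Your write-up just makes that omitted bookkeeping explicit.
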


\section{The Jacobian and the Degeneracy Criteria} \label{sec:jacobian_and_degeneracy}

According to Proposition~\ref{prop:semialgebraic_desc}, the centered realization space $\RC{P}$
is a semi-algebraic set defined by quadratic equations and strict quadratic inequalities,
so it may be seen as an open subset (cut out by the strict inequalities)
of a fiber of the characteristic map defined by the equations.
This interpretation of Robertson's work on \cite[p.~19]{robertson1984} yields the setup for applying the implicit function theorem in this context.

\begin{definition}[The characteristic map] \label{def:charmap}
    Let $P$ be a $d$-polytope with vertices $\v_1, \cdots \v_n$ and facets $F_1, \cdots, F_m$.
    Let $\mu := f_{0,d-1}(P)$ denote the number of vertex-facet incidences of~$P$.
    The \emph{characteristic map} of~$P$, denoted $\Phi_P$, is the map
    \begin{align*}
        \Phi_P: \R^{d \times (n+m)}     & \rightarrow \R^\mu \\
        (W, B)  & \mapsto \left( \w_i^t \b_j - 1 \right)_{\substack{[i,j], \\ \v_i \in F_j}}.
    \end{align*}
    We order the entries $\w_i^t \b_j - 1$ lexicographically into the vector $\Phi_P(W, B)$, i.e.~$\w_i^t \b_j - 1$ occupies the $[i,j]$th entry.
\end{definition}

Clearly, this map sends (the vertex and facet description of) any centered realization of~$P$ to $\vec{0} \in \R^\mu$.
Thus $\RC{P}$ is an open subset of the fiber $\Phi_P^{-1}(\vec{0})$.
The heuristic that the solution set of a system of equations has dimension
“number of variables minus number of equations”
suggests that $\dim \RC{P}$ is the number of variables $d f_0(P)+ d f_{d-1}(P)$ minus the number of equations $f_{0,d-1}(P)$.
This is what we call the “natural guess” for $\dim \RC{P}$ (formerly called the “naive guess” in \cite{Z130,ZA2011}).

\begin{definition}[The “natural guess” for the dimension of the realization space] \label{def:ng}
    Let $P$ be a $d$-polytope. The \emph{natural guess} for 
	$\dim \RC{P}$, the dimension of the centered realization space of~$P$, is
    \[
	\NG{P} := d \big(f_0(P)+f_{d-1}(P)\big) - f_{0,d-1}(P).
	\]
\end{definition}

From an algebraic point of view, the equations $\w_i^t\b_j = 1$ do not necessarily generate a nice ideal. It is certainly not prime in general, as we argue next.
\begin{remark}
    For any $d$-polytope $P$ with $n$ vertices and $m$ facets,
    the fiber $\Phi_P^{-1}(\vec{0})$ of the characteristic map contains
	certain subsets of ``degenerate configurations.'' Namely, there is
    \begin{itemize}
        \item the set $S_V(P)$ where all columns of $V$ are identical (corresponding to identical points)
        and the $m$ affine hyperplanes contain that point, which has dimension
	    \[
		d + m(d-1) = (m+1)d-m = (m+1)(d-1) + 1,
		\]
        \item and the set $S_A(P)$ where all columns of $A$ are identical (corresponding to identical affine hyperplanes)
        and the $n$ points lie on that affine hyperplane, which has dimension
    	\[
		d + n(d-1) = (n+1)d-n = (n+1)(d-1) + 1.
		\]
    \end{itemize}
    These subsets are not contained in $\RC{P}$ because all of the strict inequalities are violated. Yet the equations defining $\RC{P}$ still hold.
\end{remark}

\begin{example}
    The $3$-cube $C_3 \subset \R^3$ has $8$ vertices, $6$ facets and $24$ vertex-facet incidences.
    We will later give a proof for the Legendre--Steinitz Theorem, which in this special case
	gives that $\dim(\RC{C_3}) = \NG{C_3} = 18$.
	However, the set of ``degenerate configurations'' $S_A(C_3)$ for the cube has dimension~$19$.
\end{example}

\subsection{Examples}
In the following proposition, we calculate the natural guess for special classes and common constructions of polytopes.

\begin{proposition}\label{prop:ng}
    Let $P$ be a $d$-polytope.
    \begin{enumerate}[\rm(i)]
        \item If $P$ is simplicial, then $\NG{P} = df_0(P)$.
        \item If $P$ is simple, then $\NG{P} = df_{d-1}(P)$.
        \item If $P$ is a $3$-polytope, then $\NG{P} = f_{1}(P) + 6$.
        \item If $P = \pyr(Q)$ is a pyramid over a $(d-1)$-polytope $Q$, then $\NG{P} = \NG{Q} + 2d$.
        \item If $P = \bipyr(Q)$ is a bipyramid over a $(d-1)$-polytope $Q$, then $\NG{P} = 2\NG{Q} + (2-d)f_0(Q) + 2d$.
    \end{enumerate}
\end{proposition}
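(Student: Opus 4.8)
The plan is to verify each part by direct computation from Definition~\ref{def:ng}, the only real work being to express the number of vertex-facet incidences $f_{0,d-1}(P)$ in terms of simpler data in each case. Once the correct incidence count is in hand, every identity follows by substitution and elementary algebra, so none of the five parts is deep; the care lies entirely in getting the combinatorics right.

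For parts (i) and (ii) I would use the defining combinatorics of the class directly. For a simplicial $d$-polytope every facet is a $(d-1)$-simplex, hence is incident to exactly $d$ vertices; summing over facets gives $f_{0,d-1}=df_{d-1}$, and substituting into $\NG{P}$ cancels the $df_{d-1}$ term to leave $df_0$. For (ii) one can either dualize---the polar of a simple polytope is simplicial, and $\NG$ is invariant under the duality permutation $(V,A)\mapsto(A,V)$ since it interchanges $f_0$ and $f_{d-1}$ while preserving $f_{0,d-1}$, so (ii) follows from (i) with vertices and facets swapped---or argue directly that each vertex of a simple polytope lies on exactly $d$ facets, whence $f_{0,d-1}=df_0$.

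For (iii) I would combine an incidence identity with Euler's relation. Each facet of a $3$-polytope is a polygon whose number of vertices equals its number of edges, while each edge lies on exactly two facets; summing the vertex counts over all facets therefore yields $f_{0,2}=2f_1$. Substituting into $\NG{P}=3(f_0+f_2)-f_{0,2}$ and then using $f_0-f_1+f_2=2$ to replace $f_0+f_2$ by $f_1+2$ gives $\NG{P}=3(f_1+2)-2f_1=f_1+6$.

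Parts (iv) and (v) require genuine care, and the incidence bookkeeping there is where I expect the main obstacle. For a pyramid $P=\pyr(Q)$ one adds a single apex, so $f_0(P)=f_0(Q)+1$, while the facets are the base $Q$ together with the pyramids over the facets of $Q$, giving $f_{d-1}(P)=f_{d-2}(Q)+1$. The base is incident to all $f_0(Q)$ vertices of $Q$ but not the apex, and each lateral facet $\pyr(F)$ is incident to the vertices of $F$ together with the apex; summing produces $f_{0,d-1}(P)=f_0(Q)+f_{0,d-2}(Q)+f_{d-2}(Q)$. For a bipyramid $P=\bipyr(Q)$ the crucial distinction is that the base $Q$ is \emph{not} a facet: the facets of $P$ are the pyramids over the facets of $Q$, two for each facet of $Q$ (one from each apex), so $f_0(P)=f_0(Q)+2$, $f_{d-1}(P)=2f_{d-2}(Q)$, and since each such facet is incident to the vertices of the corresponding facet of $Q$ plus one apex, $f_{0,d-1}(P)=2f_{0,d-2}(Q)+2f_{d-2}(Q)$. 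In both cases I would substitute these counts, expand $\NG{Q}=(d-1)(f_0(Q)+f_{d-2}(Q))-f_{0,d-2}(Q)$, and collect terms; the $f_{0,d-2}(Q)$ and $f_{d-2}(Q)$ contributions cancel cleanly, leaving $\NG{Q}+2d$ in the pyramid case and $2\NG{Q}+(2-d)f_0(Q)+2d$ in the bipyramid case. The main thing to get right is tracking which facets contain the apex (or apices) and remembering that the equatorial copy of $Q$ disappears as a facet of the bipyramid.
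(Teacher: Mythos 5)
Your proposal is correct and follows essentially the same route as the paper: in each case you compute $f_{0,d-1}(P)$ from the combinatorics of the class or construction (facets of a simplicial polytope have $d$ vertices, $f_{0,2}=2f_1$ for $3$-polytopes combined with Euler's relation, and the same facet/incidence bookkeeping for pyramids and bipyramids, including the key observation that the base $Q$ is not a facet of $\bipyr(Q)$) and then substitute into the definition of $\NG{P}$. The incidence counts you give match those in the paper's proof exactly, so there is nothing to add.
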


\begin{proof}
    (i) Each facet has exactly $d$ vertices. Therefore, $f_{0,d-1}(P) = df_{d-1}$.\\
    (ii) This is the dual statement of (i).\\
    (iii) Each edge of a $P$ determines $4$ vertex-facet incidences, while each vertex-facet incidence corresponds to $2$ edges.
    Therefore, $2f_{0,2}(P) = 4f_1(P)$. Using Euler's formula for $3$-polytopes, we get
    \[
	\NG{P} = 3\left( f_0(P) + f_2(P) \right) - f_{0,2}(P) = f_1(P) + 6.
	\]
    (iv) Here we have $f_0(P) = f_0(Q) + 1$ and $f_{d-1}(P) = f_{d-2}(Q) + 1$.
    Every vertex of $Q$ lies in the facet $Q$ of~$P$ and in a facet of~$P$ that is the pyramid over a facet $F$ of $Q$ if and only if it lies in~$F$.
    This gives us $f_0(Q) + f_{0,d-2}(Q)$ incidences.
    Additionally, the new vertex of the pyramid (the apex) lies in $f_{d-2}(Q)$ many facets of~$P$.
    Thus, $f_{0,d-1}(P) = f_0(Q) + f_{0,d-2}(Q) + f_{d-2}(Q)$.
    Therefore, the natural guess is
    \begin{align*}
        \NG{P} &= d\left( f_0(Q) + 1 + f_{d-2}(Q) + 1 \right) - f_0(Q) - f_{0,d-2}(Q) - f_{d-2}(Q) \\
               &= (d-1)\left( f_0(Q) + f_{d-2}(Q) \right) - f_{0,d-2}(Q) + 2d = \NG{Q} + 2d.
    \end{align*}
    (v) Here we have $f_0(P) = f_0(Q) + 2$ and $f_{d-1}(P) = 2f_{d-2}(Q)$.
    Each vertex of $Q$ lies in two facets of~$P$ that is the two pyramids over a facet $F$ of $Q$ if and only if it lies in~$F$.
    This gives us $2f_{0,d-2}(Q)$ incidences.
    Finally, the new two vertices of the bipyramid (the apexes) each lie in $f_{d-2}(Q)$ facets of~$P$.
    Thus, $f_{0,d-1}(P) = 2f_{0,d-2}(Q) + 2f_{d-2}(Q)$.
    Therefore, the natural guess is
    \begin{align*}
        \NG{P} &= d\left( f_0(Q) + 2 + 2f_{d-2}(Q) \right) - 2f_{0,d-2}(Q) - 2f_{d-2}(Q) \\
               &= 2(d-1) \left( f_0(Q) + f_{d-2}(Q) \right) - 2f_{0,d-2}(Q) +(2-d)f_0(Q) + 2d \\
               &= 2 \NG{Q} + (2-d)f_0(Q) + 2d.
               \qedhere
    \end{align*}
\end{proof}

As we will see later, the natural guess is equal to the dimension of the realization space for large, very common classes of polytopes.
However, this is not true in general. The following examples show that the natural guess can be negative.

\begin{example}[Adiprasito \& Ziegler \cite{ZA2011}]
    A $d$-polytope is called \emph{cubical} if its facets are combinatorially isomorphic to the standard $(d-1)$-cube $[-1,1]^{d-1}$.
    A \emph{neighborly cubical polytope} is a cubical $d$-polytope which has the ($\floor{\frac{d}{2}} - 1$)-skeleton of a cube.
    For any $n \geq d \geq 2r+2$, Joswig and Ziegler in \cite{JZ2000} constructed a cubical $d$-polytope $\mathrm{NCP}_d(n)$ whose $r$-skeleton is combinatorially equivalent to that of the $n$-cube.
    The number of vertices of $\mathrm{NCP}_d(n)$ is $2^n$, and the number of its facets is given by
    \[
	f_{d-1}(\mathrm{NCP}_d(n)) = 2d + 4 \sum_{p=0}^{n-d-1}\left( \binom{\floor{\frac{d}{2}} + p + 1}{p + 2} + \binom{\floor{\frac{d+1}{2}} + p}{p + 2} \right) 2^p,
	\]
    see \cite[Corollary 18]{JZ2000}.
    Each facet of $\mathrm{NCP}_d(n)$ has $2^{d-1}$ vertices,
    and thus $f_{0,d-1}(\mathrm{NCP}_d(n)) = 2^{d-1}f_{d-1}(\mathrm{NCP}_d(n))$.
    Now the natural guess of $\mathrm{NCP}_d(n)$ is a function of $d$ and $n$ and we can compute it.
    \begin{itemize}
        \item If $d = 4$, then $f_{3}(\mathrm{NCP}_4(n)) = (n-2)2^{n-2}$, and thus $\NG{\mathrm{NCP}_4(n)} = (6-n)2^n < 0$ whenever $n \geq 7$.
        \item If $d \geq 5$, then $\NG{\mathrm{NCP}_d(n)} < 0$ for $n \geq d+1$.
    \end{itemize}
\end{example}

\begin{example}\label{examples:bipyramid-over-dual-to-cyclic}
    Let $Q:=C_{d-1}(n)^\Delta$ be the polar of a $(d-1)$-dimensional (centered) cyclic polytope with $n$ vertices.
    The number of facets of $Q$ is $n$, and the number of vertices of $Q$ is given by
    \begin{align*}
        f_0(Q) = \binom{n - \ceil{\frac{d-1}{2}}}{\floor{\frac{d-1}{2}}} + \binom{n - \floor{\frac{d-1}{2}} -1}{\ceil{\frac{d-1}{2}}-1},
    \end{align*}
    see \cite[4.7.3]{Grunbaum2003}.
    Since $Q$ is simple, we have $\NG{Q} = (d-1)f_{d-2}(Q)$.
    Finally, let $P$ be a bipyramid over $Q$.
    The natural guess of~$P$ is then given by
    \begin{align*}
        \NG{P} &= 2\NG{Q} + (2-d)f_0(Q) + 2d \\
               &= 2(d-1)f_{d-2}(Q) + (2-d)f_0(Q) + 2d.
    \end{align*}
    We can easily compute $\NG{P}$ as a function of $n$ and $d$.
    For instance, if $d=5$, then $\NG{P} < 0$ for $n \geq 10$, and if $d=6$, then $\NG{P} < 0$ for $n \geq 9$.
\end{example}

We note that in these examples, $\RC{P}$ may still be a manifold,
(and for the bipyramids over duals of cyclic polytopes this is indeed the case, see Proposition \ref{prop:simplenecessarilyflat}), but certainly its dimension is not $\NG{P}$.

\subsection{The Jacobian Criterion}

Now we are ready to state our first main theorem, which is an application of the implicit function theorem.

\begin{theorem}[The Jacobian Criterion for $d$-polytopes; cf.~Robertson {\cite[p.~19]{robertson1984}}] \label{thm:jacobian}
    Let $P$ be a $d$-polytope.
    If the Jacobian matrix $J_{\Phi_P}(V_0, A_0)$ of the characteristic map $\Phi_P$ at some point $(V_0, A_0) \in \RC{P}$ has full row rank 
	(that is, if it has rank $f_{0,d-1}(P)$),
    then $\RC{P}$ is, in a neighborhood of $(V_0, A_0)$, a smooth manifold of dimension $\NG{P}$.
    In particular, $\dim_{(V_0, A_0)} \RC{P} = \NG{P}$ and $\dim \RC{P} \geq \NG{P} \ge 0$.
\end{theorem}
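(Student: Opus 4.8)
The plan is to recognize this theorem as a direct application of the implicit function theorem in its submersion (regular value) form, with essentially no polytope-specific input beyond what is already assembled in Definition~\ref{def:charmap} and Proposition~\ref{prop:semialgebraic_desc}. First I would record that the characteristic map $\Phi_P\colon \R^{d(n+m)}\to\R^\mu$ is polynomial, each coordinate being the quadratic $\w_i^t\b_j-1$, and hence is $C^\infty$; I write $N := d(n+m) = d\big(f_0(P)+f_{d-1}(P)\big)$ for the number of variables and $\mu := f_{0,d-1}(P)$ for the number of equations. Since $(V_0,A_0)\in\RC{P}$ is a centered realization, we have $\Phi_P(V_0,A_0)=\vec{0}$, so $(V_0,A_0)$ lies in the fiber $\Phi_P^{-1}(\vec{0})$.

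The key step is to invoke the hypothesis directly: $J_{\Phi_P}(V_0,A_0)$ has full row rank $\mu$, so the differential of $\Phi_P$ at $(V_0,A_0)$ is surjective, i.e.~$\Phi_P$ is a submersion there and $\vec{0}$ is a regular value in a neighborhood of $(V_0,A_0)$. The submersion theorem (equivalently, the implicit function theorem applied after a suitable splitting of coordinates) then yields an open neighborhood $U\subseteq\R^N$ of $(V_0,A_0)$ such that $\Phi_P^{-1}(\vec{0})\cap U$ is a smoothly embedded $C^\infty$-submanifold of $\R^N$ of dimension $N-\mu = \NG{P}$.

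Finally I would pass from the fiber to the realization space itself. By Proposition~\ref{prop:semialgebraic_desc}, $\RC{P}$ is the subset of $\Phi_P^{-1}(\vec{0})$ cut out by the strict inequalities $\w_i^t\b_j<1$ over the non-incident pairs $\v_i\notin F_j$; since these are finitely many strict inequalities on continuous functions, they define an open set $O\subseteq\R^N$, and $(V_0,A_0)$ satisfies them. Hence on the smaller open neighborhood $U':=U\cap O$ the set $\RC{P}$ agrees with $\Phi_P^{-1}(\vec{0})$, so $\RC{P}$ is a smooth manifold of dimension $\NG{P}$ near $(V_0,A_0)$. This gives $\dim_{(V_0,A_0)}\RC{P}=\NG{P}$; taking the maximum of local dimensions over all points of the semi-algebraic set yields $\dim\RC{P}\ge\NG{P}$, and $\NG{P}=N-\mu\ge 0$ because the rank $\mu$ of a $\mu\times N$ matrix cannot exceed $N$.

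I expect no genuine obstacle here: the entire mathematical content is the standard regular-value theorem, and the only care needed is the routine observation that the strict inequalities define an open set, so that passing from the closed fiber to the open semi-algebraic set $\RC{P}$ preserves both the manifold structure and its dimension. The substance of the paper lies not in this theorem but in the subsequent task of deciding, for concrete polytopes, whether the full-rank hypothesis actually holds.
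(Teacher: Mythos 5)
Your proposal is correct and follows exactly the paper's argument: the paper likewise invokes the implicit function theorem to conclude that $\Phi_P^{-1}(\vec{0})$ is locally a smooth manifold of dimension $\NG{P}$ near $(V_0,A_0)$, and then notes that $\RC{P}$ is an open subset of this fiber. Your version merely spells out the submersion/regular-value mechanics and the openness of the strict inequalities, which the paper leaves implicit.
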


\begin{proof}
    By the implicit function theorem (see for example \cite[Theorem 5.15]{Lee2001}),
    $\Phi_P^{-1}(\vec{0})$ is, in a neighborhood of $(V_0, A_0)$, a smooth manifold of dimension $\NG{P}$.
    The result follows since $\RC{P}$ is an open subset of $\Phi_P^{-1}(\vec{0})$.
\end{proof}

\begin{remark}
    More generally, even if the Jacobian matrix does not have full rank, its corank still gives an upper bound for the local dimension:
    \[
        \dim_{(V_0, A_0)} \RC{P} \leq d(n+m) - \rank J_{\Phi_P}(V_0, A_0).
    \]
    Indeed, if the rank is $r$ we select $r$ rows of the Jacobian 
	that form a submatrix of full rank $r$, and then apply the Implicit Function Theorem
	to the corresponding map $\Phi_P^{(r)}: \R^{d \times (n+m)}     \rightarrow \R^r$.
	obtained by restricting the characteristic map to the corresponding $r$ components.	
\end{remark}

The previous theorem gives us the motivation to study the structure of the Jacobian matrix of $\Phi_P$ for a polytope $P$.

\begin{notation}
    Let $P$ be a $d$-polytope with $n$ vertices, $m$ facets and $\mu$ vertex-facet incidences.
    The \emph{Jacobian matrix in compressed notation} of $\Phi_P$ at $(V_0, A_0) \in \R^{d \times (n+m)}$,
    denoted by $J_{\Phi_P}^{c}(V_0, A_0)$,
    is the matrix we get from the matrix $J_{\Phi_P}(V_0, A_0)$ by replacing the columns indexed by
    $v_{i_1}, \cdots, v_{i_d}$ (resp.\ $a_{j_1}, \cdots, a_{j_d}$)
    by one column indexed by $\v_i$ (resp.\ $\a_j$) whose entries are the corresponding row vectors.
    Note that
    \begin{center}
        $J_{\Phi_P}^{c}(V_0, A_0)$ is a ($\mu \times (n+m)$)-matrix whose entries are row vectors,
    \end{center}
    while
    \begin{center}
        $J_{\Phi_P}(V_0, A_0)$ is a ($\mu \times d(n+m)$)-matrix.
    \end{center}
\end{notation}

\begin{lemma}
    $J_{\Phi_P}^{c}(V, A)$ has $\vec{a}_j^t$ in the $[i,j]$th row and the $i$th column,
    and it has $\vec{v}_i^t$ in the $[i,j]$th row and the $(n+j)$th column.
    All other entries are zero.
\end{lemma}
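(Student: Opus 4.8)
The plan is to prove this by direct differentiation, since each component of $\Phi_P$ is a polynomial -- in fact a shifted bilinear form -- so no analytic subtlety is involved and the entire content of the lemma is the bookkeeping of which variables each component depends on. First I would fix an incidence $[i,j]$ with $\v_i \in F_j$ and write the corresponding component of $\Phi_P$ in coordinates: with $\w_i = (w_{1i}, \dots, w_{di})^t$ and $\b_j = (b_{1j}, \dots, b_{dj})^t$, the $[i,j]$th component is $\w_i^t \b_j - 1 = \sum_{k=1}^{d} w_{ki} b_{kj} - 1$. The key structural observation is that this expression involves \emph{only} the $d$ coordinates of the $i$th column of $W$ and the $d$ coordinates of the $j$th column of $B$, and is constant in every other entry of $(W,B)$.

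Next I would read off the two nonzero blocks of partial derivatives of this component. Differentiating with respect to the vertex coordinate $w_{ki}$ yields $b_{kj}$, so the $d$ ordinary columns indexed by $\v_i$ contribute the row vector $(b_{1j}, \dots, b_{dj}) = \b_j^t$; evaluated at the point $(V,A)$ this is $\a_j^t$. Symmetrically, differentiating with respect to $b_{kj}$ yields $w_{ki}$, so the $d$ ordinary columns indexed by $\a_j$ contribute $\w_i^t$, which equals $\v_i^t$ at $(V,A)$. Every partial derivative of this component with respect to a coordinate of a column $\w_{i'}$ with $i' \neq i$, or of a column $\b_{j'}$ with $j' \neq j$, vanishes identically, which gives the ``all other entries are zero'' assertion.

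Finally I would translate this into the compressed notation. By definition the $i$th compressed column packages the $d$ ordinary coordinate columns of vertex $\v_i$ into a single entry recorded as a row vector, and the $(n+j)$th compressed column does the same for facet $\a_j$. Hence in the $[i,j]$th row the $i$th compressed column holds $\a_j^t$, the $(n+j)$th compressed column holds $\v_i^t$, and all remaining compressed entries are zero, exactly as stated. The only point requiring care -- and the closest thing to an obstacle -- is the indexing convention: one must check that vertex columns occupy compressed positions $1, \dots, n$ and facet columns positions $n+1, \dots, n+m$, and that the compression merges precisely the $d$ coordinate columns belonging to a single vertex or facet, so that the bilinear derivative computed above lands in the correct single entry. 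Beyond this indexing verification the claim is immediate.
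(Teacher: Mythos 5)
Your proposal is correct and follows essentially the same route as the paper: the paper's proof is exactly the direct computation $\partial(\v_i^t\a_j-1)/\partial v_{i,k}=a_{j,k}$ and $\partial(\v_i^t\a_j-1)/\partial a_{j,k}=v_{i,k}$, with all other partials vanishing. Your write-up merely spells out the bookkeeping of the compressed indexing in more detail, which the paper leaves implicit.
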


\begin{proof}
    This follows since, for each $k \in \{1, \cdots, d\}$ we have
    \[
    \frac{\partial (\v_i^t \a_j - 1)}{\partial v_{i,k}} = a_{j,k}
    \qquad \textrm{ and } \qquad
    \frac{\partial (\v_i^t \a_j - 1)}{\partial a_{i,k}} = v_{i,k}. \qedhere
    \]
\end{proof}

\begin{example}
    Consider the $3$-polytope $P$ shown below,
	whose facet labels are indicated below in the image of a projection onto the facet
    $F_1 = \{\v_1, \v_2, \v_3, \v_4\}$.

    \begin{figure}[H]
		\begin{minipage}{.5\textwidth}
            \centering
            \scalebox{.7}{
\begin{tikzpicture}
    [x  = {(0.496534251504211cm,-0.138003984746687cm)},
     y  = {(0.868011745659684cm,0.0754650600195354cm)},
     z  = {(0.00305720132416302cm,0.987552492230283cm)},
     scale = 2]

    %% Styles
    \tikzstyle{vertex} = [fill=black, circle, inner sep=2.5pt]
    \tikzstyle{bvertex} = [fill=gray, circle, inner sep=2.5pt]
    \tikzstyle{edge} = [draw=black, line width=2pt]
    \tikzstyle{dedge} = [draw=white, line width=4pt]
    \tikzstyle{bedge} = [draw=gray, line width=2pt, opacity=0.6]
    \tikzstyle{face} = [draw=none, fill=none, fill opacity=0.35]

    %% Coordinates
    \coordinate (v1) at (2, -2, 0);
    \coordinate (v2) at (2, 2, 0);
    \coordinate (v3) at (-2, 2, 0);
    \coordinate (v4) at (-2, -2, 0);
    \coordinate (v6) at (0.5, 0.5, 2);
    \coordinate (v5) at (-0.5, -0.5, 2);

    %% Draw
    \draw[bedge] (v3) -- (v2);
    \draw[bedge] (v3) -- (v6);
    \draw[bedge] (v3) -- (v5);
    \draw[bedge] (v3) -- (v4);

    \node at (v3) [bvertex, label=below:$\mathbf{v}_3$] {};

    \draw[face] (v1) -- (v2) -- (v6) -- (v5) -- (v4) -- cycle;

    \draw[edge] (v1) -- (v2);
    \draw[dedge] (v1) -- (v6);
    \draw[edge] (v1) -- (v6);
    \draw[dedge] (v1) -- (v5);
    \draw[edge] (v1) -- (v5);
    \draw[edge] (v1) -- (v4);
    \draw[edge] (v2) -- (v6);
    \draw[edge] (v5) -- (v4);
    \draw[edge] (v5) -- (v6);

    \node at (v1) [vertex, label=below:$\mathbf{v}_1$] {};
    \node at (v2) [vertex, label=below:$\mathbf{v}_2$] {};
    \node at (v4) [vertex, label=below:$\mathbf{v}_4$] {};
    \node at (v5) [vertex, label=above:$\mathbf{v}_5$] {};
    \node at (v6) [vertex, label=above:$\mathbf{v}_6$] {};
\end{tikzpicture}
    }
        \end{minipage}%
		\begin{minipage}{.5\textwidth}
		    \centering
\begin{tikzpicture}
    [scale=1]

    \tikzstyle{vertex} = [fill=black, circle, inner sep=2pt]
    \tikzstyle{edge} = [draw=black, line width=1.5pt]

    \coordinate (v1) at (-2, -2);
    \coordinate (v2) at (2, -2);
    \coordinate (v3) at (2, 2);
    \coordinate (v4) at (-2, 2);
    \coordinate (v5) at (-1/2, 1/2);
    \coordinate (v6) at (1/2, -1/2);

	\draw[edge] (v1) -- (v2) -- (v3) -- (v4) -- cycle;
    \draw[edge] (v5) -- (v6);
    \draw[edge] (v1) -- (v5) -- (v3);
    \draw[edge] (v1) -- (v6) -- (v3);
    \draw[edge] (v2) -- (v5) -- (v4);

    \coordinate[label=center:$F_2$] (a2) at (1/2,-3/2);
    \coordinate[label=center:$F_3$] (a3) at (3/2,-1/2);
    \coordinate[label=center:$F_4$] (a4) at (1/2,1/2);
    \coordinate[label=center:$F_5$] (a5) at (-1/2,3/2);
    \coordinate[label=center:$F_6$] (a6) at (-3/2,1/2);
    \coordinate[label=center:$F_7$] (a7) at (-1/2,-1/2);
    \coordinate[label=center:$F_1$] (a1) at (0,-5/2);

    \node at (v1) [vertex, label=below left:$\mathbf{v}_1$] {};
    \node at (v2) [vertex, label=below right:$\mathbf{v}_2$] {};
    \node at (v3) [vertex, label=above right:$\mathbf{v}_3$] {};
    \node at (v4) [vertex, label=above left:$\mathbf{v}_4$] {};
    \node at (v5) [vertex, label=left:$\mathbf{v}_5$] {};
    \node at (v6) [vertex, label=right:$\mathbf{v}_6$] {};
\end{tikzpicture}
		\end{minipage}
	\end{figure}

    The matrix $J_{\Phi_P}^{c}(V, A)$ at a point $(V, A) \in \R^{3 \times (6+7)}$ is

    \renewcommand{\arraystretch}{1.3}
	\begin{align*}
	\kbordermatrix{
			  & \v_1    & \v_2    & \v_3    & \v_4    & \v_5    & \v_6    & \a_1    & \a_2    & \a_3    & \a_4    & \a_5    & \a_6    & \a_7    \\
    \	[1,1] & \a^t_1  & \vec{0} & \vec{0} & \vec{0} & \vec{0} & \vec{0} & \v^t_1  & \vec{0} & \vec{0} & \vec{0} & \vec{0} & \vec{0} & \vec{0} \\
    \	[1,2] & \a^t_2  & \vec{0} & \vec{0} & \vec{0} & \vec{0} & \vec{0} & \vec{0} & \v^t_1  & \vec{0} & \vec{0} & \vec{0} & \vec{0} & \vec{0} \\
	\	[1,6] & \a^t_6  & \vec{0} & \vec{0} & \vec{0} & \vec{0} & \vec{0} & \vec{0} & \vec{0} & \vec{0} & \vec{0} & \vec{0} & \v^t_1  & \vec{0} \\
	\	[1,7] & \a^t_7  & \vec{0} & \vec{0} & \vec{0} & \vec{0} & \vec{0} & \vec{0} & \vec{0} & \vec{0} & \vec{0} & \vec{0} & \vec{0} & \v^t_1  \\
	\	[2,1] & \vec{0} & \a^t_1  & \vec{0} & \vec{0} & \vec{0} & \vec{0} & \v^t_2  & \vec{0} & \vec{0} & \vec{0} & \vec{0} & \vec{0} & \vec{0} \\
	\	[2,2] & \vec{0} & \a^t_2  & \vec{0} & \vec{0} & \vec{0} & \vec{0} & \vec{0} & \v^t_2  & \vec{0} & \vec{0} & \vec{0} & \vec{0} & \vec{0} \\
	\	[2,3] & \vec{0} & \a^t_3  & \vec{0} & \vec{0} & \vec{0} & \vec{0} & \vec{0} & \vec{0} & \v^t_2  & \vec{0} & \vec{0} & \vec{0} & \vec{0} \\
    \	[3,1] & \vec{0} & \vec{0} & \a^t_1  & \vec{0} & \vec{0} & \vec{0} & \v^t_3  & \vec{0} & \vec{0} & \vec{0} & \vec{0} & \vec{0} & \vec{0} \\
	\	[3,3] & \vec{0} & \vec{0} & \a^t_3  & \vec{0} & \vec{0} & \vec{0} & \vec{0} & \vec{0} & \v^t_3  & \vec{0} & \vec{0} & \vec{0} & \vec{0} \\
	\	[3,4] & \vec{0} & \vec{0} & \a^t_4  & \vec{0} & \vec{0} & \vec{0} & \vec{0} & \vec{0} & \vec{0} & \v^t_3  & \vec{0} & \vec{0} & \vec{0} \\
	\	[3,5] & \vec{0} & \vec{0} & \a^t_5  & \vec{0} & \vec{0} & \vec{0} & \vec{0} & \vec{0} & \vec{0} & \vec{0} & \v^t_3  & \vec{0} & \vec{0} \\
	\	[4,1] & \vec{0} & \vec{0} & \vec{0} & \a^t_1  & \vec{0} & \vec{0} & \v^t_4  & \vec{0} & \vec{0} & \vec{0} & \vec{0} & \vec{0} & \vec{0} \\
	\	[4,5] & \vec{0} & \vec{0} & \vec{0} & \a^t_5  & \vec{0} & \vec{0} & \vec{0} & \vec{0} & \vec{0} & \vec{0} & \v^t_4  & \vec{0} & \vec{0} \\
	\	[4,6] & \vec{0} & \vec{0} & \vec{0} & \a^t_6  & \vec{0} & \vec{0} & \vec{0} & \vec{0} & \vec{0} & \vec{0} & \vec{0} & \v^t_4  & \vec{0} \\
	\	[5,4] & \vec{0} & \vec{0} & \vec{0} & \vec{0} & \a^t_4  & \vec{0} & \vec{0} & \vec{0} & \vec{0} & \v^t_5  & \vec{0} & \vec{0} & \vec{0} \\
	\	[5,5] & \vec{0} & \vec{0} & \vec{0} & \vec{0} & \a^t_5  & \vec{0} & \vec{0} & \vec{0} & \vec{0} & \vec{0} & \v^t_5  & \vec{0} & \vec{0} \\
	\	[5,6] & \vec{0} & \vec{0} & \vec{0} & \vec{0} & \a^t_6  & \vec{0} & \vec{0} & \vec{0} & \vec{0} & \vec{0} & \vec{0} & \v^t_5  & \vec{0} \\
	\	[5,7] & \vec{0} & \vec{0} & \vec{0} & \vec{0} & \a^t_7  & \vec{0} & \vec{0} & \vec{0} & \vec{0} & \vec{0} & \vec{0} & \vec{0} & \v^t_5  \\
	\	[6,2] & \vec{0} & \vec{0} & \vec{0} & \vec{0} & \vec{0} & \a^t_2  & \vec{0} & \v^t_6  & \vec{0} & \vec{0} & \vec{0} & \vec{0} & \vec{0} \\
	\	[6,3] & \vec{0} & \vec{0} & \vec{0} & \vec{0} & \vec{0} & \a^t_3  & \vec{0} & \vec{0} & \v^t_6  & \vec{0} & \vec{0} & \vec{0} & \vec{0} \\
	\	[6,4] & \vec{0} & \vec{0} & \vec{0} & \vec{0} & \vec{0} & \a^t_4  & \vec{0} & \vec{0} & \vec{0} & \v^t_6  & \vec{0} & \vec{0} & \vec{0} \\
	\	[6,7] & \vec{0} & \vec{0} & \vec{0} & \vec{0} & \vec{0} & \a^t_7  & \vec{0} & \vec{0} & \vec{0} & \vec{0} & \vec{0} & \vec{0} & \v^t_6  \\
	}.
	\end{align*}
    \renewcommand{\arraystretch}{1}

    Notice that the columns are indexed by the vertices and the facets, while the rows are indexed by the vertex-facet incidences.
\end{example}

\begin{definition}[The vertex-facet incidence graph]
    The \emph{vertex-facet incidence graph} of a polytope $P$ is the undirected graph
    $\Gamma_P = (\mathcal{V} \cup \mathcal{F}, E)$,
    where $\mathcal{V}$ is the set of all vertices of~$P$,
    $\mathcal{F}$ is the set of all facets of~$P$, and
    \[
	E = \left\{ \{\v, F\} \subset \mathcal{V}\cup \mathcal{F} \mid \v \in F \right\}.
	\]
\end{definition}

Since there are no edges among the nodes in $\mathcal{V}$, nor among the nodes in $\mathcal{F}$, the graph $\Gamma_P$ is bipartite.

\begin{definition}[$k$-degenerate graphs]
    An undirected graph $G$ is \emph{$k$-degenerate} if there exists an ordering of its nodes
    in which each node has at most $k$ neighbors appearing after it in this ordering.
\end{definition}

The degeneracy of a graph was defined by Lick \& White in \cite{LW1970},
where they established some basic properties of degenerate graphs.
One of them is the following proposition.

\begin{proposition}[Lick \& White {\cite{LW1970}}]
    Let $G = (V,E)$ be a $k$-degenerate graph with $|V| \geq k$. Then
    \[
	|E| \leq k|V| - \binom{k+1}{2}.
	\]
\end{proposition}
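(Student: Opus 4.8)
The plan is to fix an ordering of the vertices that witnesses the $k$-degeneracy and then bound $|E|$ by a charging argument in which each edge is assigned to its earlier endpoint. Write $n := |V|$, and let $v_1, \ldots, v_n$ be an ordering in which every node has at most $k$ neighbors appearing later in the ordering. For an edge $\{v_i, v_j\}$ with $i < j$, charge it to the earlier vertex $v_i$. This assigns each edge to exactly one vertex, so $|E|$ equals the total number of charges, and the number of edges charged to $v_i$ is precisely the number of neighbors of $v_i$ that come after it in the ordering.

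The key refinement is to bound these charges by $\min(k,s)$ rather than by $k$ for the trailing vertices. First I would record two bounds on the number of later neighbors of the vertex in position $n-s$ (counting from the end, so $s = 0, 1, \ldots, n-1$): it has at most $k$ later neighbors by the definition of $k$-degeneracy, and it has at most $s$ later neighbors simply because only $s$ vertices follow it. Hence the number of edges charged to this vertex is at most $\min(k,s)$. This is exactly what produces the subtracted binomial term; the crude bound $k$ for every vertex would only give $|E| \leq kn$.

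Summing over all vertices then yields $|E| \leq \sum_{s=0}^{n-1} \min(k,s)$. Using the hypothesis $|V| = n \geq k$, I would split the sum at $s = k$: the terms with $s < k$ contribute $0 + 1 + \cdots + (k-1) = \binom{k}{2}$, while the remaining $n-k$ terms each equal $k$ and contribute $k(n-k)$. Adding these gives
\[
\binom{k}{2} + k(n-k) = \frac{k(k-1)}{2} + kn - k^2 = kn - \frac{k^2+k}{2} = kn - \binom{k+1}{2},
\]
which is the claimed bound.

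The argument is essentially a single counting estimate once the ordering is fixed, so I do not expect a serious obstacle. The only points requiring care are the observation that the charge must be bounded by $\min(k,s)$ rather than $k$ for the last few vertices, and the use of $|V| \geq k$ to guarantee that the second block of the sum is non-empty so that the split at $s = k$ is valid; these are precisely what make the saving of $\binom{k+1}{2}$ appear.
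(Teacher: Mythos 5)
Your proof is correct. It is in substance the same argument as the paper's, just unrolled: the paper proceeds by induction on $|V|$, repeatedly deleting the first vertex of the degeneracy ordering (which has at most $k$ later neighbors) and anchoring the induction at the base case $|V|=k$, where $|E|\le\binom{k}{2}$. Your charging argument sums exactly these same contributions directly, with the bound $\min(k,s)$ on the vertex having $s$ successors playing the role that the base case plays in the induction --- the last $k$ vertices contribute $0+1+\cdots+(k-1)=\binom{k}{2}$ rather than $k$ each, which is where the saving of $\binom{k+1}{2}$ comes from in both versions. The arithmetic $\binom{k}{2}+k(n-k)=kn-\binom{k+1}{2}$ checks out, and you correctly identify that the hypothesis $|V|\ge k$ is what makes the split of the sum at $s=k$ legitimate. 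The direct version is arguably slightly more transparent about where the binomial correction originates; the inductive version is shorter to write down. Either is a complete proof.
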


\begin{proof}
    We prove this statement by induction on $|V|$.
    If $|V| = k$, then $|E|$ can be at most $\binom{k}{2}$.
    On the other side of the inequality, we have
    \begin{align*}
        k|V| - \binom{k+1}{2} &= k^2 - \frac{k(k+1)}{2} = \frac{k(k-1)}{2} = \binom{k}{2}.
    \end{align*}
    Now let $G$ be a $k$-degenerate graph with $|V| \geq k$ and fix an order of the nodes in which each node has at most $k$ neighbors appearing after it in this ordering. In particular, the first node $x_1$ has degree at most $k$.
    Let $G' = (V', E')$ be the graph obtained from $G$ by deleting $x_1$.
    Then $G'$ is still $k$-degenerate and it has $|V| - 1$ nodes and $|E| - \deg(x_1)$ edges.
    Thus, we have
    \begin{align*}
        |E| &= |E'| + \deg(x_1) \leq k(|V| - 1) - \binom{k+1}{2} + \deg(x_1) \leq k|V| - \binom{k+1}{2}.
        \qedhere
    \end{align*}
\end{proof}

Applying the previous proposition on the vertex-facet incidence graph of
a polytope, we immediately get the following corollary.

\begin{corollary}
    Let $P$ be a $d$-polytope and assume that its vertex-facet incidence graph $\Gamma_P$ is $k$-degenerate. Then the inequality
    \[
	f_{0,d-1}(P) \leq k \left( f_0(P)+f_{d-1}(P) \right) - \binom{k+1}{2}
	\]
    holds. In particular, if $\Gamma_P$ is $d$-degenerate, then $\NG{P} \geq \binom{d+1}{2}$. \qed
\end{corollary}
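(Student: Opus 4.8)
The plan is to apply the Lick--White proposition directly to the vertex-facet incidence graph $\Gamma_P$, since the corollary is essentially a translation of that inequality into the language of face numbers. The first step is to identify the graph parameters: by definition the node set of $\Gamma_P$ is $\mathcal{V} \cup \mathcal{F}$, so $|V| = f_0(P) + f_{d-1}(P)$, and its edge set consists of exactly the pairs $\{\v, F\}$ with $\v \in F$, so $|E| = f_{0,d-1}(P)$. With this dictionary in hand, the inequality to be proved is nothing but $|E| \le k|V| - \binom{k+1}{2}$ for the $k$-degenerate graph $\Gamma_P$.

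The one hypothesis of the proposition that must be checked before quoting it is $|V| \ge k$. This is where I would spend a sentence: since $P$ is a $d$-polytope, it has at least $d+1$ vertices and at least $d+1$ facets, so $f_0(P) + f_{d-1}(P) \ge 2(d+1)$. A $k$-degenerate graph with more than $k$ nodes is the generic situation, and for the value $k = d$ used in the second claim we certainly have $2(d+1) > d$, so the hypothesis holds. Once this is confirmed, the first inequality of the corollary follows immediately by substituting the identifications above into the conclusion of the proposition.

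For the ``in particular'' statement I would simply specialize to $k = d$ and rearrange. Recall from Definition~\ref{def:ng} that $\NG{P} = d\big(f_0(P)+f_{d-1}(P)\big) - f_{0,d-1}(P)$. Plugging the bound $f_{0,d-1}(P) \le d\big(f_0(P)+f_{d-1}(P)\big) - \binom{d+1}{2}$ into this expression cancels the term $d\big(f_0(P)+f_{d-1}(P)\big)$ and leaves $\NG{P} \ge \binom{d+1}{2}$. I do not anticipate any genuine obstacle here: the content of the corollary is the observation that $\Gamma_P$ has the right nodes and edges to be fed into Lick--White, and the only thing requiring a moment's care is the side condition $|V| \ge k$, which is routine from the standard lower bounds on the face numbers of a $d$-polytope.
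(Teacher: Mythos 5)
Your proof is correct and is exactly the argument the paper intends: the corollary is a direct application of the Lick--White proposition to $\Gamma_P$ with $|V|=f_0(P)+f_{d-1}(P)$ and $|E|=f_{0,d-1}(P)$, followed by substitution into the definition of $\NG{P}$ for $k=d$. Your extra sentence verifying the side condition $|V|\ge k$ via $f_0(P)+f_{d-1}(P)\ge 2(d+1)>d$ is a welcome bit of care that the paper leaves implicit.
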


The following theorem is inspired by the proof for Steinitz's Theorem given by Borisov, Dickinson \& Hastings \cite[Lemma 2.8]{BDH2008}.

\begin{theorem}[The Degeneracy Criterion] \label{thm:degeneracy}
    Let $P$ be a $d$-polytope.
    If the vertex-facet incidence graph $\Gamma_P$ has a $d$-degenerate ordering of its nodes such that the following two conditions hold.
    \begin{enumerate}[\rm(i)]
        \item For each facet in this ordering,
        the vertices appearing after it and connected to it by an edge are linearly independent in any centered realization of~$P$.
        \item For each vertex in this ordering,
        the normal vectors of the facets appearing after it and connected to it by an edge are linearly independent in any centered realization of~$P$.
    \end{enumerate}
    Then $\RC{P}$ is a smooth manifold of dimension $\NG{P}$.
\end{theorem}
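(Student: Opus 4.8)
The plan is to deduce this from the Jacobian Criterion (Theorem~\ref{thm:jacobian}): it suffices to show that at \emph{every} point $(V,A)\in\RC{P}$ the Jacobian $J_{\Phi_P}(V,A)$ has full row rank $\mu=f_{0,d-1}(P)$, since then the smooth-manifold conclusion of Theorem~\ref{thm:jacobian} holds in a neighborhood of each point and hence $\RC{P}$ is globally a smooth manifold of dimension $\NG{P}$. So the entire argument reduces to proving that the $\mu$ rows of $J_{\Phi_P}(V,A)$ are linearly independent for an arbitrary fixed realization.

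First I would unpack what a linear dependence among the rows means, reading off the block structure recorded in the compressed-notation lemma. Writing a hypothetical relation $\sum_{[i,j]:\,\v_i\in F_j}\lambda_{i,j}\,(\text{row}_{[i,j]})=\vec{0}$ and inspecting it block by block: the $d$ coordinates belonging to vertex $\v_i$ contribute $\sum_{j:\,\v_i\in F_j}\lambda_{i,j}\,\a_j=\vec{0}$ (one relation per vertex), while the $d$ coordinates belonging to facet $F_j$ contribute $\sum_{i:\,\v_i\in F_j}\lambda_{i,j}\,\v_i=\vec{0}$ (one relation per facet). Thus a dependence is precisely a choice of scalars $\lambda_{i,j}$, indexed by the edges of $\Gamma_P$, that is simultaneously a linear relation among the facet normals incident to each vertex and among the vertex vectors incident to each facet.

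Next I would peel off the coefficients using the $d$-degenerate ordering, processing the nodes of $\Gamma_P$ in the forward order $x_1,x_2,\dots,x_{n+m}$. I claim, by induction on $k$, that after node $x_k$ is processed every $\lambda_{i,j}$ whose edge is incident to one of $x_1,\dots,x_k$ vanishes. When $x_k$ is a vertex $\v_i$, I split its incident facets into those preceding $x_k$ and those following it; the coefficients on the preceding edges are already zero by the inductive hypothesis, so the vertex relation collapses to $\sum\lambda_{i,j}\,\a_j=\vec{0}$ summed only over facets $F_j$ that follow $\v_i$ and are incident to it. By hypothesis~(ii) these normals are linearly independent, forcing the surviving $\lambda_{i,j}$ to vanish. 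The case where $x_k$ is a facet is symmetric, using the facet relation and hypothesis~(i); the base case $k=1$ is the same argument with no preceding edges. Note that degeneracy guarantees at most $d$ surviving terms at each node, consistent with linear independence of at most $d$ vectors in $\R^d$.

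Once the induction is complete, every $\lambda_{i,j}$ is zero, so the rows are linearly independent and $J_{\Phi_P}(V,A)$ has rank $\mu$ at the arbitrary point $(V,A)\in\RC{P}$; Theorem~\ref{thm:jacobian} then finishes the proof. I expect the only real obstacle to be the bookkeeping in the peeling step: making sure that at each node exactly the ``earlier'' edges have already been killed, so that the surviving relation involves only the linearly independent ``later'' vectors supplied by (i) and (ii). The degeneracy ordering is precisely what arranges this, and everything else — the block decomposition and the appeal to the implicit function theorem — is routine.
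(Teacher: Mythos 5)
Your proposal is correct and is essentially the paper's own argument: the paper proves full row rank of the Jacobian by reordering columns along the degeneracy order and moving rows so as to obtain an upper block-triangular matrix whose diagonal blocks have full row rank by conditions (i) and (ii), which is exactly your peeling induction on a hypothetical row dependence, phrased as a matrix manipulation instead of an explicit elimination of the coefficients $\lambda_{i,j}$. Both then conclude via the Jacobian Criterion at every point of $\RC{P}$.
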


\begin{proof}
    Let $\v_1, \cdots, \v_n$ and $F_1, \cdots, F_m$ denote the vertices and the facets of~$P$ respectively.
    We will show that the Jacobian Criterion (Theorem \ref{thm:jacobian}) is satisfied at each point of $\RC{P}$ by block triangularizing the Jacobian matrix.
    Let $J^{c} := J^{c}_{\Phi_P}(V_0, A_0)$ denote the Jacobian matrix in compressed notation
    of the characteristic map $\Phi_P$ of~$P$ at some centered realization $(V_0,A_0) \in \RC{P}$.
    First reorder the columns of $J^{c}$ in the way given by the degeneracy.
    Then, process these columns one by one from left to right.
    If the column is indexed by a vertex $\v_i$, move the rows indexed by 
    \[
	\{ [i,j] \mid \v_i \in F_j, F_j \textrm{ appears after } \v_i \textrm{ in the ordering} \}
    \]
    to the bottom of $J^{c}$. If the column is indexed by a normal vector $\a_j$, move the rows indexed by 
    \[
	\{ [i,j] \mid \v_i \in F_j, \v_i \textrm{ appears after } F_j \textrm{ in the ordering} \}
    \]
    to the bottom of $J^{c}$.
    After doing this for all the columns, we get an upper block-triangularized matrix.
    Now $J^{c}$ has full rank if all of these blocks have full row rank, which is guaranteed by conditions (i) and (ii).
    The statement follows using the Jacobian Criterion at all centered realizations.
\end{proof}

\begin{remark} \label{remark:degeneracy}
    Let $G$ be the graph obtained from $\Gamma_P$ by deleting some $r$ edges such that $G$ is $d$-degenerate and satisfies the conditions (i) and (ii) from the previous theorem. Then 
    $$\dim \RC{P} \leq \NG{P} + r.$$
    This corresponds to finding a $((\mu-r) \times d(n+m))$-submatrix of the Jacobian matrix that has full rank at all centered realizations of~$P$.
\end{remark}

The Degeneracy Criterion is not purely combinatorial.
The conditions (i) and (ii) might be satisfied for some geometric centered realizations and fail for others.
However, in the next section, we will derive some purely combinatorial results from the Degeneracy Criterion.

Before moving on to applications, we set up a (scaled) homogeneous version of the results of this section, which turns out to be useful below. Let $C\subset \R^{d+1}$ be a closed and pointed polyhedral cone of dimension $d+1$. Analogously to the centered realization space, we define a primal-dual realization space model for $C$ as follows
\begin{align*}
    \Rh(C) = \left\{ (W,B)\in \R^{(d+1)\times (n+m)} \mid {\rm cone}(W) = \{x\in\R^{d+1}\vert \, B^tx \leq 0\} \text{ realizes }C, \right. \\
    \left. ||\w_i||^2 = ||\b_j||^2 = 1 \right\},
\end{align*}
where $n$ is the number of extreme rays and $m$ the number of facets of $C$.
Analogously to the centered realization space, this set can be described as a semi-algebraic set. Let $\v_1, \cdots, \v_n$ and $F_1, \cdots, F_m$ denote the extreme rays and the facets of $C$ respectively. Then $\Rh(C)$ is equal to the set
    \begin{align*}
        \Rh(C) = \left\{
        (W,B) \in \R^{d+1 \times (n+m)}
		\quad\middle|\quad
		 \w_i^t \b_j
		 \Big\{
        \begin{array}{ll}
            =0 & \textrm{ if } \v_i \in F_j    \\[-1mm]
            <0 & \textrm{ if } \v_i \not\in F_j
        \end{array}
        ,
        \sum_{k=1}^{d+1}{w_{ik}^2}=\sum_{k=1}^{d+1}{b_{jk}^2}=1
        \right\}.
    \end{align*}

\begin{theorem}[The Homogeneous Degeneracy Criterion] \label{thm:degeneracyhom}
    Let $C$ be a closed and pointed polyhedral cone of dimension $d+1$. Assume that its ray-facet incidence graph $\Gamma_C$ has a  $d$-degenerate ordering of its nodes such that the following conditions hold. 
    \begin{enumerate}[\rm(i)]
        \item For each facet in this ordering, the ray generators appearing after it and connected to it by an edge are linearly independent in any realization of $C$.
        \item For every ray in this ordering, the normal vectors of the facets appearing after it and connected to it by an edge are linearly independent in any realization of $C$.
    \end{enumerate}
    Then the realization space $\Rh(C)$ is a smooth manifold of dimension $d(n+m) - \mu$, where $n$ is the number of extreme rays of $C$, $m$ is the number of facets of $C$ and $\mu$ is the number of ray-facet incidences.
\end{theorem}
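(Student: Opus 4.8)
The plan is to follow the proof of the affine Degeneracy Criterion (Theorem~\ref{thm:degeneracy}) almost verbatim: I would introduce the relevant characteristic map, show that its Jacobian has full row rank at every point of $\Rh(C)$ by block-triangularizing it along the $d$-degenerate ordering, and then apply the implicit function theorem as in Theorem~\ref{thm:jacobian}. The one genuinely new feature is that the defining system now carries, in addition to the $\mu$ homogeneous incidence equations $\w_i^t\b_j=0$, the $n+m$ normalization equations $\|\w_i\|^2=1$ and $\|\b_j\|^2=1$; the heart of the argument is to check that these extra equations slot into the same block structure.

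Concretely, I would define the characteristic map
\[
\Psi_C\colon \R^{(d+1)\times(n+m)}\to\R^{\mu+n+m},\quad
(W,B)\mapsto\bigl((\w_i^t\b_j)_{\v_i\in F_j},\,(\|\w_i\|^2-1)_i,\,(\|\b_j\|^2-1)_j\bigr),
\]
so that $\Rh(C)$ is the open subset of $\Psi_C^{-1}(\vec 0)$ carved out by the strict inequalities $\w_i^t\b_j<0$ for $\v_i\notin F_j$. In compressed notation the incidence row $[i,j]$ carries $\b_j^t$ in the column of $\w_i$ and $\w_i^t$ in the column of $\b_j$, exactly as in the affine case, while the normalization row of $\w_i$ (resp.\ $\b_j$) carries $2\w_i^t$ (resp.\ $2\b_j^t$) in its own column and zeros everywhere else. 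Reorder the columns by the degeneracy ordering and, scanning left to right, assign to each node its \emph{responsible rows}: for a ray $\v_i$ these are the incidence rows $[i,j]$ with $F_j$ appearing after $\v_i$, together with the normalization row of $\w_i$; dually for a facet. Since each edge of $\Gamma_C$ is assigned to its earlier endpoint, the responsible rows exhaust all $\mu+n+m$ rows, and every responsible row of a node in position $p$ is supported in column $p$ and columns to its right (the incidence rows because their second nonzero column belongs to the later endpoint, the normalization row because it is supported on column $p$ alone). This exhibits the Jacobian as block upper-triangular with diagonal blocks indexed by the nodes.

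The main step, and the place I would be most careful, is to check that each diagonal block has full row rank. For a ray $\v_i$ the block consists of the rows $\b_j^t$ (over facets $F_j\ni\v_i$ appearing after $\v_i$) together with the single row $2\w_i^t$. Condition~(ii) guarantees the $\b_j$ are linearly independent; the new ingredient is that $\w_i$ is orthogonal to each of them, since $\w_i^t\b_j=0$ whenever $\v_i\in F_j$. Because $\|\w_i\|^2=1\neq0$, the vector $\w_i$ cannot lie in the span of vectors to which it is orthogonal, so adjoining $\w_i$ to the independent family $\{\b_j\}$ keeps it independent and the block has full row rank. The dual argument, using condition~(i) and the same orthogonality $\w_i^t\b_j=0$, handles the facet blocks; by $d$-degeneracy each block has at most $d+1$ rows, all vectors in $\R^{d+1}$, so full rank is possible.

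Consequently $J_{\Psi_C}$ has full rank $\mu+n+m$ at every point of $\Rh(C)$, and the implicit function theorem makes $\Psi_C^{-1}(\vec 0)$ locally a smooth manifold of dimension $(d+1)(n+m)-(\mu+n+m)=d(n+m)-\mu$; since $\Rh(C)$ is open in this fiber, the same conclusion holds for $\Rh(C)$. I expect no serious obstacle beyond the orthogonality observation: one must simply confirm that absorbing each normalization row into the diagonal block of its own node preserves both the upper-triangular shape (it is supported on a single column) and the full-rank property (by orthogonality), after which the whole argument is a direct transcription of the affine Degeneracy Criterion.
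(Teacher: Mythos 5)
Your proposal is correct and follows essentially the same route as the paper: block-triangularize the compressed Jacobian along the $d$-degenerate ordering, absorb each normalization row into the diagonal block of its own node, and use the orthogonality $\w_i^t\b_j=0$ together with $\|\w_i\|^2=1\neq 0$ to see that adjoining the normalization vector preserves full rank of each block. The dimension count $(d+1)(n+m)-(\mu+n+m)=d(n+m)-\mu$ and the final appeal to the implicit function theorem also match the paper's argument.
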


\begin{proof}
    Let $\v_1, \ldots, \v_n$ be the extreme rays and $F_1,\ldots,F_m$ the facets of $C$.
    As before, let $J^c$ denote the Jacobian matrix in compressed notation of the set of equations defining $\Rh(C)$ at the point $(W, B) \in \Rh(C)$.
    We show that this matrix has full rank, namely equal to $\mu + (n+m)$.
    Each of the last $n+m$ rows corresponds to an equation of the form $||\w_i||^2=1$ or of the form $||\b_j||^2=1$.
    In $J^c$, the only non-zero entry in a row corresponding to the equation $||\w_i||^2 = 1$ is $2\w_i$ and it is in the column indexed by $\v_i$.
    Similarly, the only non-zero entry in a row corresponding to the equation $||\b_j||^2 = 1$ is $2\b_j$ and it is in the column indexed by $F_j$.

    First, we reorder the columns of $J^{c}$ by the degeneracy order of the incidence graph.
    Then, we process these columns one by one from left to right.
    If the column is indexed by a vertex $\v_i$, move the rows indexed by 
    \[
        \{ [i,j] \mid \v_i \in F_j, F_j \textrm{ appears after } \v_i \textrm{ in the ordering} \} \cup \{ \text{The row indexed by } ||\v_i||^2 = 1 \} 
    \]
    to the bottom of $J^{c}$. If the column is indexed by a normal vector $\a_j$, move the rows indexed by 
    \[
        \{ [i,j] \mid \v_i \in F_j, \v_i \textrm{ appears after } F_j \textrm{ in the ordering} \} \cup \{ \text{The row indexed by } ||\a_j||^2 = 1\}
    \]
    to the bottom of $J^{c}$.
    After doing this for all the columns, we get an upper block-triangularized matrix.
    Now $J^{c}$ has full rank if all of these blocks have full row rank.
    Each block has at most $d+1$ vectors. By the conditions (i) and (ii), all of them but the one coming from $\|w_i\|^2=1$ or $\|\b_j\|^2 = 1$ are linearly independent. This last one is orthogonal to the other vectors in the block by the incidence relation of vertices and facets.
    Thus, each block has full rank.
    The statement follows using the implicit function theorem at all points of $\Rh(C)$.
\end{proof}

\section{Applications of the Degeneracy Criterion}\label{sec:apps}

\begin{definition}[Vertex set and facet set of a face]
    Let $P$ be a $d$-polytope and let $F$ be a face of~$P$.
    The \emph{vertex set} of $F$ is the set of all vertices of~$P$ contained in $F$.
    The \emph{facet set} of $F$ is the set of all facets of~$P$ containing $F$.
\end{definition}

The following proposition is easy.
However, combining it with the Degeneracy Criterion produces interesting combinatorial results.
\begin{proposition} \label{prop:rank_prop}
    Let $P \subset \R^d$ be a polytope containing the origin in its interior.
    \begin{enumerate}[\rm(i)]
        \item Let $S$ be a set of $k \leq d$ vertices that lie on a facet of~$P$.
        If some $k-1$ vertices from $S$ are the vertex set of a $(k-2)$-face,
        then the vertices in $S$ are linearly independent.
        \item Let $S$ be a set of $k \leq d$ facets that share a vertex of~$P$.
        If some $k-1$ facets from $S$ are the facet set of a $(d-k+1)$-face,
        then the normal vectors of the facets in $S$ are linearly independent.
    \end{enumerate}
\end{proposition}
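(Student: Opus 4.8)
The plan is to reduce the assertion about \emph{linear} independence to one about \emph{affine} independence, exploiting the fact that a polytope with the origin in its interior has all its facet hyperplanes avoiding the origin.

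First I would record the elementary observation that underlies both parts: if points $\vec{p}_1,\dots,\vec{p}_r$ all lie on an affine hyperplane $H=\{\x : \vec{c}^t\x = 1\}$ that does not pass through the origin, then they are linearly independent if and only if they are affinely independent. A linear dependence is trivially an affine one; conversely, applying $\vec{c}^t$ to a relation $\sum_i \lambda_i \vec{p}_i = \vec{0}$ gives $\sum_i \lambda_i = 0$, so any such linear relation is in fact an affine dependence, and affine independence then forces all $\lambda_i=0$. Since the origin lies in the interior of $P$ (and hence, by polarity, also in the interior of $P^\Delta$), every facet-defining hyperplane of either polytope misses the origin, which is exactly the hypothesis needed below.

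For (i): the $k$ vertices of $S$ lie on a common facet, hence on a hyperplane $H$ not through the origin, so by the observation it suffices to prove that they are affinely independent. The $k-1$ vertices forming the $(k-2)$-face $G$ span $\aff(G)$, which has dimension $k-2$; since exactly $k-1$ points span a $(k-2)$-dimensional flat, they are affinely independent. It then remains to show that the remaining vertex $\v$ of $S$ does not lie in $\aff(G)$. Here I would use the standard identity $P\cap\aff(G)=G$, valid for any face $G$ (because $G$ is the set of maximizers on $P$ of some linear functional $\vec{c}$, and $\aff(G)$ is contained in the corresponding level hyperplane $\{\vec{c}^t\x=\beta\}$, so a point of $P$ in $\aff(G)$ already maximizes $\vec{c}$ and hence lies in $G$). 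If $\v\in\aff(G)$ then, being a vertex of $P$, it would lie in $G$ and thus be one of the $k-1$ vertices of $G$, contradicting that $S$ consists of $k$ distinct vertices. Hence all $k$ vertices are affinely independent, and the observation finishes part (i).

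For (ii): I would dualize rather than repeat the argument. Passing to the polar $P^\Delta$, the normal vectors of the facets in $S$ are precisely the vertices of $P^\Delta$ corresponding to these facets; the common vertex of $P$ becomes a facet of $P^\Delta$ containing all of them; and the $(d-k+1)$-face $G$ whose facet set is the chosen $k-1$ facets becomes, under the inclusion-reversing bijection of face lattices, a face $G^\Delta$ of dimension $d-1-(d-k+1)=k-2$ whose vertex set is exactly the corresponding $k-1$ normal vectors. This is precisely the hypothesis of part (i) for $P^\Delta$ applied to the set of these $k$ vertices, so they are linearly independent. The one genuinely non-formal step is the identity $P\cap\aff(G)=G$ together with the remark that the $k$-th vertex is distinct from the vertices of $G$; everything else is bookkeeping, and part (ii) comes for free once the polarity dictionary is in place.
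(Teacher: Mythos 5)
Your proof is correct and follows essentially the same route as the paper's: reduce linear independence to affine independence via the facet hyperplane avoiding the origin, establish affine independence of the $k-1$ vertices of the $(k-2)$-face, show the remaining vertex lies off $\aff(G)$ using the supporting-hyperplane characterization of a face, and obtain (ii) by polarity. The only difference is cosmetic: you spell out the ``linear iff affine on a hyperplane missing $\vec{0}$'' observation and the duality dictionary in slightly more detail than the paper does.
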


\begin{proof}
    The second statement (ii) is the dual of (i), so we will only prove (i).
    Let $F$ be the facet of~$P$ which the vertices of $S$ lie on.
    Let $\v_1, \cdots, \v_{k-1} \in S$ be the vertices of~$P$ which form a $(k-2)$-face,
    and $\v_k$ be the last vertex in $S$.
    Since $\v_1, \cdots, \v_k$ form a $(k-2)$-face (that is, a $(k-2)$-simplex), they are affinely independent.
    By the definition of a proper face, there is an affine hyperplane in $\R^d$ which contains all these vertices and does not contain $\v_k$.
    The affine hull of $\v_1, \cdots, \v_{k-1}$ is contained in this hyperplane,
    and thus the affine hull of $\v_1, \cdots, \v_{k-1}$ does not contain $\v_k$.
    Thus, $\v_1, \cdots, \v_k$ are affinely independent.
    Since the hyperplane spanned by $F$ does not contain $\vec{0}$, these vertices are also linearly independent.
\end{proof}

If the size of $S$ is at most $3$, some of the assumptions in the previous proposition can be dropped.

\begin{proposition} \label{prop:three_rank_prop}
    Let $P \subset \R^d$ be a polytope containing the origin in its interior.
    \begin{enumerate}[\rm(i)]
        \item Let $S$ be a set of $k \leq 3$ vertices that lie on a facet of~$P$.
        Then the vertices in $S$ are linearly independent.
        \item Let $S$ be a set of $k \leq 3$ facets that share a vertex of~$P$.
        Then the normal vectors of the facets in $S$ are linearly independent.
    \end{enumerate}
\end{proposition}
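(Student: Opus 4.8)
The plan is to reduce statement (ii) to (i) by polarity, exactly as in the proof of Proposition~\ref{prop:rank_prop}: passing to the polar polytope $P^\Delta$ (which again has the origin in its interior) turns facets of~$P$ sharing a vertex into vertices of $P^\Delta$ lying on a common facet, and turns facet normals into vertices. So it suffices to prove (i). For (i), my goal is to show that the hypothesis ``some $k-1$ vertices form a $(k-2)$-face'' used in Proposition~\ref{prop:rank_prop} becomes unnecessary once $k \le 3$, because affine independence of the vertices in $S$ then holds automatically. Once affine independence is in hand, linear independence follows verbatim from the last step of the proof of Proposition~\ref{prop:rank_prop}: the vertices lie on a facet hyperplane that misses~$\vec{0}$, and affine independence of points on such a hyperplane upgrades to linear independence.

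First I would dispose of the cases $k=1$ and $k=2$. A single vertex on a facet is a nonzero vector, since its facet hyperplane avoids the origin, and is therefore linearly independent; and any two distinct vertices are trivially affinely independent, so the hyperplane argument applies directly.

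The key case is $k=3$, where affine independence of three distinct vertices is exactly the statement that they are not collinear. Here I would use the following observation: three distinct vertices of a polytope can never be collinear. Indeed, if $\v_1,\v_2,\v_3$ were distinct and collinear, one of them, say~$\v_2$, would lie strictly between the other two, so $\v_2 = (1-t)\v_1 + t\v_3$ for some $t \in (0,1)$; this exhibits~$\v_2$ as a proper convex combination of two other points of~$P$, contradicting that~$\v_2$ is an extreme point. Hence the three vertices are affinely independent, and the hyperplane argument concludes the proof. The only genuine content beyond Proposition~\ref{prop:rank_prop} is this collinearity observation---it is precisely what lets us drop the $(k-2)$-face hypothesis for small $S$---and since it is elementary, I anticipate no real obstacle.
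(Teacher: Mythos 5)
Your proposal is correct and follows essentially the same route as the paper: the paper's proof is the one-line observation that any three vertices on a facet are affinely independent (since vertices are extreme points and hence never collinear) and that the affine span of a facet avoids the origin in a centered realization, with (ii) obtained from (i) by duality. Your write-up simply spells out these same steps in more detail.
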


\begin{proof}
    This is true since any three vertices on a facet are affinely independent and the affine span of any facet cannot contain the origin in a centered realization. Again, (ii) is the dual statement of (i).
\end{proof}

\subsection{Almost 3-degenerate Polytopes}\label{sec:combresults}

\begin{theorem} \label{thm:almost_3degen}
    Let $P$ be a $d$-polytope.
    Let $\Pi$ be the graph obtained from the vertex-facet incidence graph $\Gamma_P$ by removing the nodes of degree $d$.
    If $\Pi$ is $3$-degenerate, then $\RC{P}$ is a smooth manifold of dimension $\NG{P}$.
\end{theorem}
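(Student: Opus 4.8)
The plan is to apply the Degeneracy Criterion (Theorem~\ref{thm:degeneracy}) to a carefully chosen ordering of the nodes of $\Gamma_P$. The ordering suggested by the hypothesis is: first list all nodes of degree exactly $d$ (in any order), and then append a fixed $3$-degenerate ordering of the nodes of $\Pi$. Since a facet of a $d$-polytope has at least $d$ vertices and a vertex lies on at least $d$ facets, a node of degree exactly $d$ is either a \emph{simplex facet} (a facet whose $d$ vertices form a $(d-1)$-simplex) or a \emph{simple vertex} (a vertex lying on exactly $d$ facets); the nodes of $\Pi$ are precisely the non-simplex facets and the non-simple vertices.

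First I would verify that this ordering is $d$-degenerate. A degree-$d$ node has exactly $d$ neighbors, so at most $d$ of them appear after it. For a node $x$ of $\Pi$, each of its neighbors in $\Gamma_P$ is either a neighbor in $\Pi$ or a degree-$d$ node; the latter all precede $x$ since they occupy the initial block, so the neighbors of $x$ appearing after it are exactly its $\Pi$-neighbors appearing after it, of which there are at most three by the $3$-degeneracy of $\Pi$ (and a non-empty $\Pi$ forces $d \ge 3$, so this is at most $d$). Hence the ordering is $d$-degenerate. This is also exactly why the degree-$d$ nodes must come \emph{first}: it confines every $\Pi$-node to at most three later neighbors, the regime handled by Proposition~\ref{prop:three_rank_prop}. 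Concretely, for a $\Pi$-facet the at most three later vertices lie on a common facet and are linearly independent by Proposition~\ref{prop:three_rank_prop}(i), while for a $\Pi$-vertex the at most three later facet normals are linearly independent by Proposition~\ref{prop:three_rank_prop}(ii); this establishes conditions (i) and (ii) of the Degeneracy Criterion for all nodes of $\Pi$.

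It remains to check conditions (i) and (ii) for the degree-$d$ nodes, which is where Proposition~\ref{prop:rank_prop} (rather than its weaker three-element version) is needed, since a simplex facet can have more than three vertices when $d>3$. For a simplex facet $F_j$, any $d-1$ of its $d$ vertices are the vertex set of a $(d-2)$-face, so Proposition~\ref{prop:rank_prop}(i) shows that all $d$ vertices are linearly independent; hence any subset of them --- in particular the vertices appearing after $F_j$ --- is linearly independent, giving (i). Dually, for a simple vertex any $d-1$ of its $d$ facets are the facet set of an edge, so Proposition~\ref{prop:rank_prop}(ii) gives the linear independence of all $d$ facet normals and hence of those appearing after the vertex, giving (ii). With conditions (i) and (ii) verified at every centered realization, the Degeneracy Criterion immediately yields that $\RC{P}$ is a smooth manifold of dimension $\NG{P}$. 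The only genuine obstacle is the bookkeeping for the initial block of degree-$d$ nodes: one must recognise that these are exactly the simplex facets and simple vertices, whose entire incidence neighbourhoods are automatically independent, so that interleaving them in any order leaves conditions (i) and (ii) intact.
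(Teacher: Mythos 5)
Your proof is correct and follows essentially the same route as the paper's: the same ordering (all degree-$d$ nodes first, then a $3$-degenerate ordering of $\Pi$), with Proposition~\ref{prop:rank_prop} handling the simplex facets and simple vertices and Proposition~\ref{prop:three_rank_prop} handling the nodes of $\Pi$, which have at most three later neighbours. Your write-up merely spells out in more detail the verification of $d$-degeneracy and the identification of degree-$d$ nodes, which the paper leaves implicit.
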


\begin{proof}
    We will use the Degeneracy Criterion (Theorem \ref{thm:degeneracy}) to prove this statement.
    The following ordering of the nodes of $\Gamma_P$ is $d$-degenerate.
    First put all the nodes of $\Gamma_P$ of degree $d$. These correspond to the simple vertices and the simplex facets.
    Then put the nodes of $\Pi$ ordered by a $3$-degenerate ordering.
    By Proposition \ref{prop:rank_prop},
    the conditions (i) and (ii) of the Degeneracy Criterion are satisfied at the nodes of $\Gamma_P$ of degree $d$.
    By Proposition \ref{prop:three_rank_prop},
    the conditions (i) and (ii) of the Degeneracy Criterion are satisfied at the remaining nodes of $\Gamma_P$, which are exactly those nodes that are connected to at most $3$ later nodes in the $d$-degenerate ordering we constructed.
\end{proof}

This is our main tool that we apply to special classes of polytopes to show that their realization spaces are manifolds. Before we begin with these applications, we again record a homogeneous version for later use.
\begin{theorem}\label{thm:almost3degenhom}
    Let $C$ be a closed and pointed polyhedral cone of dimension $d+1$. Let $\Pi$ be the graph obtained from its ray-facet incidence graph $\Gamma_C$ by removing the nodes of degree $d$. If $\Pi$ is $3$-degenerate, then $\Rh(C)$ is a smooth manifold of dimension $d(m+n) - \mu$.
\end{theorem}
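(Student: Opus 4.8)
The plan is to reproduce the proof of Theorem~\ref{thm:almost_3degen} in the homogeneous setting, using the Homogeneous Degeneracy Criterion (Theorem~\ref{thm:degeneracyhom}) in place of the Degeneracy Criterion (Theorem~\ref{thm:degeneracy}). The ray--facet incidence graph $\Gamma_C$ of a pointed $(d+1)$-dimensional cone is combinatorially identical to the vertex--facet incidence graph of any of its base polytopes, so every graph-theoretic ingredient transfers unchanged. Concretely, I would build the same $d$-degenerate ordering of $\Gamma_C$: first list all nodes of degree $d$ (the simplicial facets and the simple rays of $C$), then append the nodes of $\Pi$ in a $3$-degenerate order. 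This is $d$-degenerate for the same reason as before: a degree-$d$ node has at most $d$ neighbours in total, hence at most $d$ after it, while a $\Pi$-node has at most $3\le d$ neighbours inside $\Pi$ after it and all of its degree-$d$ neighbours precede it.

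The only genuine work is to verify conditions~(i) and~(ii) of Theorem~\ref{thm:degeneracyhom}, i.e.\ the homogeneous analogues of Propositions~\ref{prop:rank_prop} and~\ref{prop:three_rank_prop}: for each facet in the ordering the ray generators of its later neighbours must be linearly independent, and for each ray the normal vectors of its later facet-neighbours must be linearly independent. By passing to the dual cone $C^\Delta$ (whose ray generators are the facet normals $\b_j$ of $C$ and whose facet structure encodes the rays of $C$), condition~(ii) for $C$ becomes condition~(i) for $C^\Delta$, so it suffices to treat~(i). Here lies the one point where the homogeneous case differs from the affine one: a facet of $C$ spans a \emph{linear} hyperplane through the apex $\vec{0}$, so the ``affine independence $\Rightarrow$ linear independence'' step used inside Proposition~\ref{prop:rank_prop} is not directly available. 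I would instead fix a linear functional $c$ with $c^t\x>0$ on $C\setminus\{\vec{0}\}$ and slice to obtain the base polytope $P:=C\cap\{\x\mid c^t\x=1\}$: its vertices are the ray generators of $C$ rescaled to this affine hyperplane, and since that hyperplane misses $\vec{0}$, a set of ray generators on a common facet is linearly independent if and only if the corresponding vertices of $P$ are affinely independent. The face conditions in Propositions~\ref{prop:rank_prop}(i) and~\ref{prop:three_rank_prop}(i)---which are combinatorial and therefore shared by $C$ and $P$---supply exactly this affine independence (for the $\le 3$ case no face condition is needed, and for a simplicial facet the $d$ rays already form a basis of the facet's linear span).

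With conditions~(i) and~(ii) established at every point of $\Rh(C)$, Theorem~\ref{thm:degeneracyhom} applies and yields that $\Rh(C)$ is a smooth manifold of dimension $d(m+n)-\mu$, as claimed. I expect the main obstacle to be precisely the correct formulation and verification of the homogeneous rank propositions: one must be careful that linear independence of ray generators (rather than affine independence, as for polytopes) is the hypothesis demanded by Theorem~\ref{thm:degeneracyhom}, and that the cross-section argument above faithfully converts the inherited combinatorial face conditions into this linear independence, both for the primal statement~(i) and, via $C^\Delta$, for its dual~(ii). Everything else is a line-by-line translation of the proof of Theorem~\ref{thm:almost_3degen}.
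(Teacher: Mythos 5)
Your proposal matches the paper's own proof, which is given in a single line: run the argument of \Cref{thm:almost_3degen} verbatim, invoking the Homogeneous Degeneracy Criterion (\Cref{thm:degeneracyhom}) in place of \Cref{thm:degeneracy}. The cross-section argument you supply---slicing $C$ by a linear functional positive on $C\setminus\{\vec{0}\}$ to convert affine independence of vertices of a base polytope into linear independence of ray generators, and dualizing for the facet normals---is precisely the right way to fill in the detail that the paper leaves implicit.
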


The proof is the same as that of \Cref{thm:almost_3degen} with the only exception that we use use \Cref{thm:degeneracyhom} instead of the non-homogeneous version \Cref{thm:degeneracy}.

Here is a strict simplification of \Cref{thm:almost_3degen}.
\begin{corollary}
    Let $P$ be a $d$-polytope.
    If 
    \begin{enumerate}[\rm(i)]
        \item each vertex of~$P$ lies in at most $3$ non-simplex facets, or
        \item each facet of~$P$ contains at most $3$ non-simple vertices,
    \end{enumerate}
    then $\RC{P}$ is a smooth manifold of dimension $\NG{P}$. \qed
\end{corollary}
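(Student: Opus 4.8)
The plan is to deduce the corollary directly from Theorem~\ref{thm:almost_3degen} by checking that, under either hypothesis, the auxiliary graph $\Pi$ is $3$-degenerate. The first thing I would pin down is exactly which nodes are removed in passing from $\Gamma_P$ to $\Pi$. Since every facet of a $d$-polytope has at least $d$ vertices, with equality precisely for simplex facets, and every vertex lies in at least $d$ facets, with equality precisely for simple vertices, the degree-$d$ nodes of $\Gamma_P$ are exactly the simplex facets together with the simple vertices. Hence $\Pi$ is the bipartite graph whose two sides are the non-simple vertices and the non-simplex facets, with an edge $\{v,F\}$ whenever the non-simple vertex $v$ lies on the non-simplex facet $F$.

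With this identification, condition (i) says precisely that every vertex-node of $\Pi$ has degree at most $3$: a non-simple vertex is joined in $\Pi$ only to the non-simplex facets containing it, of which there are at most $3$ by hypothesis, and the simple vertices have already been deleted. Dually, condition (ii) says precisely that every facet-node of $\Pi$ has degree at most $3$. So in each case all nodes on one of the two sides of $\Pi$ have degree at most $3$.

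It then remains to observe that a bipartite graph in which all nodes on one side have degree at most $3$ is $3$-degenerate. For this I would order the nodes of the low-degree side first, in any order, followed by the nodes of the other side. A node on the low-degree side then has all of its neighbours on the other side, hence at most $3$ of them, and all appear later; a node on the other side has no neighbours on its own side, so none of its neighbours appear later. Thus every node has at most $3$ later neighbours, which is exactly $3$-degeneracy. (Equivalently one may invoke the standard fact that a graph is $3$-degenerate iff every subgraph has a node of degree at most $3$: any subgraph of $\Pi$ either contains a low-degree-side node, whose degree can only drop and so stays at most $3$, or consists entirely of the other side and is then edgeless.)

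Applying this to the side singled out by (i) or by (ii) shows that $\Pi$ is $3$-degenerate, whereupon Theorem~\ref{thm:almost_3degen} yields that $\RC{P}$ is a smooth manifold of dimension $\NG{P}$. I do not expect any genuine obstacle here; the only point that needs a moment's care is the bookkeeping that identifies the removed degree-$d$ nodes with the simple vertices and the simplex facets, which rests on the lower bound of $d$ for the number of vertices of a facet and, dually, for the number of facets through a vertex.
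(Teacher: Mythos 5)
Your proposal is correct and is exactly the argument the paper intends (the corollary is stated with no written proof as an immediate consequence of Theorem~\ref{thm:almost_3degen}): you identify the degree-$d$ nodes of $\Gamma_P$ with the simple vertices and simplex facets, observe that either hypothesis bounds the degrees on one side of the bipartite graph $\Pi$ by $3$, and correctly conclude $3$-degeneracy by ordering that side first. No gaps.
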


This applies to simple and simplicial polytopes, of course. The corresponding natural guess is computed in \Cref{prop:ng}.
\begin{corollary}[Simple and Simplicial Polytope] \label{cor:simple_and_simplicial}
    Let $P$ be a $d$-polytope. 
    \begin{enumerate}[\rm(i)]
        \item If $P$ is simplicial, then $\RC{P}$ is a smooth manifold of dimension $df_0(P)$.
        \item If $P$ is simple, then $\RC{P}$ is a smooth manifold of dimension $df_{d-1}(P)$. \qed
    \end{enumerate}
\end{corollary}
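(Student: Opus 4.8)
The plan is to obtain both statements as immediate consequences of the preceding corollary together with the dimension computations recorded in Proposition \ref{prop:ng}. The only thing to verify is that the hypotheses of that corollary are met, and in each case they hold in the strongest possible way, with the relevant count being zero rather than three.

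For part~(i) I would first recall that a polytope is \emph{simplicial} precisely when each of its facets is a $(d-1)$-simplex, i.e.\ a simplex facet. Hence $P$ has \emph{no} non-simplex facets at all, and so every vertex of~$P$ lies in at most~$3$ (indeed in exactly~$0$) non-simplex facets. This verifies hypothesis~(i) of the preceding corollary, which yields that $\RC{P}$ is a smooth manifold of dimension $\NG{P}$. It then remains only to identify this dimension, which Proposition \ref{prop:ng}(i) gives as $\NG{P} = df_0(P)$.

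Part~(ii) is the polar-dual mirror image. Here $P$ \emph{simple} means that every vertex lies in exactly $d$ facets, so $P$ has no non-simple vertices, and therefore every facet contains at most~$3$ (in fact~$0$) non-simple vertices. This is hypothesis~(ii) of the preceding corollary, so again $\RC{P}$ is a smooth manifold of dimension $\NG{P}$, which Proposition \ref{prop:ng}(ii) evaluates to $df_{d-1}(P)$. One could alternatively prove only one of the two parts and deduce the other from the polarity isomorphism $\RC{P} \cong \RC{P^\Delta}$, since the polar of a simplicial polytope is simple; but as both parts of Proposition \ref{prop:ng} and both hypotheses of the preceding corollary are already available, proving the two cases in parallel is just as short.

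There is essentially no obstacle to this argument: the entire content sits in the preceding corollary (and ultimately in the Degeneracy Criterion, Theorem \ref{thm:degeneracy}, via Theorem \ref{thm:almost_3degen}), while the present statement only needs the observation that simpliciality and simplicity force the ``at most~$3$'' conditions to hold vacuously, plus the bookkeeping of Proposition \ref{prop:ng}. If one wanted an argument not routed through the corollary, the natural step would be to feed the $d$-degenerate ordering of $\Gamma_P$ that places all degree-$d$ nodes first directly into Theorem \ref{thm:almost_3degen}, whose auxiliary graph $\Pi$ is then empty and hence trivially $3$-degenerate.
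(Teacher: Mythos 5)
Your proposal is correct and matches the paper's (essentially unwritten) argument: the corollary is stated with a \qed precisely because simpliciality and simplicity make the ``at most $3$'' hypotheses of the preceding corollary hold vacuously, and the dimension is then read off from Proposition~\ref{prop:ng}(i)--(ii). Nothing to add.
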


Polygons are always simple and simplicial, so we get that their realization spaces are always manifolds.
\begin{corollary}[$2$-polytopes] \label{cor:2polytopes}
    Let $P$ be a $2$-polytope, then $\RC{P}$ is a smooth manifold of dimension $2f_0(P) = 2f_1(P)$. \qed
\end{corollary}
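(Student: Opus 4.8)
The plan is to deduce Corollary~\ref{cor:2polytopes} directly from the preceding Corollary~\ref{cor:simple_and_simplicial} by observing that every polygon is trivially both simple and simplicial. First I would note that a $2$-polytope $P$ is an $n$-gon for some $n \geq 3$: each vertex lies on exactly $2$ edges (which are the facets in dimension $2$), so $P$ is simple with $d = 2$, and each facet (edge) contains exactly $2$ vertices, so $P$ is also simplicial. Thus either part of Corollary~\ref{cor:simple_and_simplicial} applies.

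Applying part (ii) (simplicity) gives that $\RC{P}$ is a smooth manifold of dimension $df_{d-1}(P) = 2f_1(P)$, while applying part (i) (simpliciality) gives dimension $df_0(P) = 2f_0(P)$. These agree because for a polygon $f_0(P) = f_1(P) = n$, so the two formulas yield the same value, consistent with the statement $2f_0(P) = 2f_1(P)$. Alternatively, one could invoke Proposition~\ref{prop:ng}(iii), which gives $\NG{P} = f_1(P) + 6$ in dimension $3$; but here we are in dimension $2$, so the relevant computation is simply that $\NG{P} = d(f_0 + f_1) - f_{0,1} = 2(n + n) - 2n = 2n$, matching $2f_0 = 2f_1 = 2n$.

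There is essentially no obstacle here: the corollary is an immediate specialization of the already-established Corollary~\ref{cor:simple_and_simplicial} to the case $d = 2$. The only thing to verify is the combinatorial fact that every polygon is simultaneously simple and simplicial, which is clear from the definitions since in dimension $2$ the facets are edges, each vertex meets exactly two edges, and each edge has exactly two vertices. No genuine argument beyond this observation is required, which is why the statement is recorded as a corollary rather than a theorem.
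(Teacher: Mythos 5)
Your proposal matches the paper exactly: the corollary is stated with a \qed precisely because, as the surrounding text notes, polygons are always simple and simplicial, so it follows immediately from Corollary~\ref{cor:simple_and_simplicial} with $d=2$ and $f_0=f_1$. Your verification of the combinatorial facts and the agreement of the two dimension formulas is correct and is the same (implicit) argument the paper uses.
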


To get the analogue for $3$-polytopes, we use the following combinatorial observation for planar graphs. Planar for us means that we can draw the graph in the plane without edges crossing but we do not insist on the edges being line segments.
\begin{proposition} \label{prop:bipartite_planar}
    Bipartite planar graphs are $3$-degenerate.
\end{proposition}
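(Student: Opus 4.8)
The plan is to reduce the statement to a minimum-degree condition via the standard reformulation of degeneracy. A graph is $3$-degenerate precisely when every nonempty subgraph contains a vertex of degree at most $3$. The forward direction I need is easy to see directly from the definition in the paper: given a graph in which every subgraph has a vertex of degree at most $3$, one constructs a $3$-degenerate ordering greedily from the front. If $x_1, \ldots, x_{i-1}$ have already been chosen, pick $x_i$ to be a vertex of degree at most $3$ in the remaining subgraph $G - \{x_1, \ldots, x_{i-1}\}$; every neighbor of $x_i$ that appears later in the ordering lies in this subgraph, so $x_i$ has at most $3$ later neighbors. Hence it suffices to show that every subgraph of a bipartite planar graph has a vertex of degree at most $3$.

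Next I would observe that the class of bipartite planar graphs is closed under passing to subgraphs: deleting vertices or edges destroys neither planarity nor the absence of odd cycles. So it is enough to prove the single statement that every bipartite planar graph $H$ has a vertex of degree at most $3$.

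The core of the argument is an edge count via Euler's formula. If $H$ has at most two vertices the claim is trivial, so assume $H$ has $V \geq 3$ vertices and $E$ edges. Since $H$ is bipartite it contains no cycle of length three, so in any planar embedding every region is bounded by at least four edges; combined with the fact that each edge borders at most two regions, Euler's relation $V - E + F = 2$ gives the bipartite planar bound $E \leq 2V - 4$. Consequently the degree sum satisfies $2E \leq 4V - 8 < 4V$, so the average degree is strictly less than $4$, and therefore $H$ has a vertex of degree at most $3$.

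The only delicate points are the boundary cases in the Euler estimate, where the face-counting inequality degenerates: graphs that are forests (where one uses instead the trivial bound $E \leq V - 1 \leq 2V - 4$ for $V \geq 3$) and disconnected graphs (handled by applying the bound componentwise, or by noting that a component of minimum size already supplies a low-degree vertex). None of these can have minimum degree exceeding $3$, so the conclusion is uniform. The essential role of the bipartite hypothesis is the girth bound $g \geq 4$, which sharpens the generic planar estimate $E \leq 3V - 6$ (which would only yield $5$-degeneracy) to $E \leq 2V - 4$, and it is exactly this improvement that produces $3$-degeneracy.
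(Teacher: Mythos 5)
Your proof is correct and follows essentially the same route as the paper: the bipartite-planar edge bound $e \leq 2v-4$ via Euler's formula and the girth-four face count, followed by the observation that this forces a vertex of degree at most $3$ in every (bipartite planar, hence hereditary) subgraph, yielding the degenerate ordering by repeated deletion. You are merely more explicit about the greedy construction of the ordering and the boundary cases (forests, small or disconnected graphs), which the paper leaves implicit.
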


\begin{proof}
    Let $G$ be a bipartite planar graph with $v$ nodes and $e$ edges.
    We first show that $e \leq 2v - 4$. Consider a planar drawing of $G$ in the plane.
    Since the graph is bipartite, every connected component of the complement of $G$ (called a face of the drawing) is bounded by at least $4$ edges, while every edge is in $2$ faces.
    Thus, $4f \leq 2e$, where $f$ is the number of faces of the drawing of $G$.
    Using Euler's formula for planar graphs, we get $e=v+f-2 \leq v + \frac{1}{2}e-2$. Thus, $e \leq 2v-4$.

    Now we get a $3$-degenerate order of $G$ recursively by deletion because our face count shows that a bipartite planar graph always contains a vertex of degree at most $3$.
\end{proof}

\begin{corollary}[$3$-polytopes, see Borisov, Dickinson \& Hastings {\cite[Lemma 2.8]{BDH2008}}] \label{cor:3polytopes}
    Let $P$ be a $3$-polytope. Then $\RC{P}$ is a smooth manifold of dimension $\NG{P}=f_1(P) + 6$.
\end{corollary}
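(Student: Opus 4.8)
The plan is to deduce this corollary from the Almost 3-degenerate Criterion (\Cref{thm:almost_3degen}) specialized to $d=3$. That criterion asks us to delete from the vertex-facet incidence graph $\Gamma_P$ all nodes of degree $3$ --- the triangular facets and the simple vertices --- and then to check that the remaining graph $\Pi$ is $3$-degenerate. The linear-independence hypotheses of the underlying Degeneracy Criterion are then automatic: by \Cref{prop:three_rank_prop}, any at most three vertices lying on a common facet (resp.\ any at most three facet normals through a common vertex) of a centered $3$-polytope are linearly independent, and a $3$-degenerate ordering attaches at most three later neighbors to each remaining node. So the entire geometric content reduces to a purely combinatorial degeneracy statement.

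I would obtain the $3$-degeneracy of $\Pi$ for free once I know that $\Gamma_P$ itself is $3$-degenerate, because $\Pi$ is an induced subgraph of $\Gamma_P$ and $k$-degeneracy is inherited by subgraphs (keep the witnessing vertex order and delete the discarded nodes from it). Since $\Gamma_P$ is bipartite, \Cref{prop:bipartite_planar} tells me it suffices to show that $\Gamma_P$ is planar.

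This planarity is the real point, and it is where I expect the effort to go. I would argue via the radial-graph picture. By Steinitz's theorem the $1$-skeleton $G$ of $P$ is a planar, $3$-connected graph, and in a fixed planar embedding of $G$ the regions are in bijection with the facets of~$P$, each region being a topological disk whose bounding cycle runs exactly through the vertices of the corresponding facet. Placing one facet-node in the interior of each region and joining it by arcs, drawn inside that region, to the vertices on the region's boundary realizes precisely the incidences $\{v,F\}$ with $v \in F$. Arcs lying in distinct regions cannot cross, and the arcs inside a single region all emanate from one interior point to distinct boundary vertices of a disk, so they can be drawn pairwise disjointly. This exhibits a plane drawing of $\Gamma_P$, proving that $\Gamma_P$ is planar.

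With planarity established, \Cref{prop:bipartite_planar} gives that $\Gamma_P$, and hence its subgraph $\Pi$, is $3$-degenerate; \Cref{thm:almost_3degen} then applies and shows that $\RC{P}$ is a smooth manifold of dimension $\NG{P}$. Finally \Cref{prop:ng}(iii) evaluates this dimension as $\NG{P} = f_1(P) + 6$. The only genuinely delicate step is the verification that $\Gamma_P$ is planar; everything downstream of it is bookkeeping with results already proved.
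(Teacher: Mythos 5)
Your proposal is correct and follows essentially the same route as the paper: compute $\NG{P}=f_1(P)+6$ via Proposition~\ref{prop:ng}(iii), show that $\Gamma_P$ is planar, invoke Proposition~\ref{prop:bipartite_planar} for $3$-degeneracy, and conclude with Theorem~\ref{thm:almost_3degen}. The only (cosmetic) difference is in how planarity of $\Gamma_P$ is exhibited --- you go through Steinitz's theorem and the region--facet correspondence of a planar embedding of the $1$-skeleton, whereas the paper draws the same radial graph directly by placing a point on each facet and projecting from an interior point onto a sphere followed by stereographic projection.
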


\begin{proof}
    By Proposition \ref{prop:ng} (iii) $\NG{P} = f_1(P) + 6$.
	The vertex-facet incidence graph $\Gamma_P$ of~$P$ is planar,
    since we can draw it in the plane without crossing edges as follows.
    Draw one additional point on each facet of~$P$, and connect it by edges to the vertices of that facet.
    Then project the vertices of~$P$, the new points and the new edges from an interior point of~$P$ onto a sphere that contains $P$.
    Now apply a stereographic projection to the plane to get a planar graph.
    By Proposition \ref{prop:bipartite_planar}, $\Gamma_P$ is $3$-degenerate.
    We are done by Theorem \ref{thm:almost_3degen}.
\end{proof}

\subsection{Hypersimplices} \label{sec:hypersimplices}

The hypersimplices are key examples in polytope theory, which first
appeared in the work of Gabri\'elov, Gel'fand \& Losik \cite{GGL} on
characteristic classes. 

\begin{definition}[The hypersimplex; see {\cite{deLoeraRambauSantos2010} or \cite{Z99}}]
    The \emph{standard hypersimplex} $\Delta_d(k)$ is the polytope defined by
	\[
	\Delta_d(k) := \conv \bigg\{ \v \in \{0,1\}^d \mid \sum_{i=1}^{d} v_i = k \bigg\},
	\]
	where $1 \leq k \leq d-1$.
\end{definition}

Note that $\Delta_d(k) \subset \R^d$ is a $(d-1)$-polytope
since it lies in the affine hyperplane defined by $\sum_{i=1}^d x_i = k$.
It has $\binom{d}{k}$ vertices (the number of ways to choose exactly $k$ ones in a zero-one vector in $\R^d$).
Note also that $\Delta_d(k)$ is affinely isomorphic to $\Delta_d(d-k)$ under the map
$\vec{x} \mapsto \vec{1} - \vec{x}$.
For $k = 1$ or $k = d-1$, we get a $(d-1)$-simplex $\Delta_{d-1}$.
Thus, we are particularly interested in the cases when $k$ lies between $2$ and $d-2$.

We first collect information about the facets of hypersimplices. See 
Ziegler \cite[Sect.~3]{Z99}, De Loera, Rambau \& Santos \cite[Sect.~6.3.6]{deLoeraRambauSantos2010} and Paffenholz \& Ziegler \cite[Sect.~3.3.1]{Z89} 
for more information.

\begin{proposition}\label{prop:hypersimplex_structure}
	For $2 \leq k \leq d-2$, the following statements hold.
	\begin{enumerate}[\rm(i)]
		\item $\Delta_d(k)$ has $2d$ facets:
            $d$ of them are combinatorially isomorphic to $\Delta_{d-1}(k)$,
            and the other $d$ facets are combinatorially isomorphic to $\Delta_{d-1}(k-1)$.
		\item Each vertex of $\Delta_d(k)$ lies on $d$ facets:
            $d-k$ of them are combinatorially isomorphic to $\Delta_{d-1}(k)$,
            and the other $k$ facets are combinatorially isomorphic to $\Delta_{d-1}(k-1)$.
        \item Each $d-3$ facets of $\Delta_d(2)$ of the form $\Delta_{d-1}(2)$ are the facet set of a triangular $2$-face.
	\end{enumerate}
\end{proposition}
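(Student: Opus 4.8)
The plan is to work throughout from the standard half-space description
$\Delta_d(k) = \{\x \in \R^d : 0 \le x_i \le 1 \text{ for all } i,\ \sum_{i=1}^d x_i = k\}$,
which is the fact recorded in the references cited above. For part (i), I would read off the candidate facets from the $2d$ box inequalities: the slice $\{x_i = 0\}$ meets $\Delta_d(k)$ in exactly the vertices whose $i$th coordinate is $0$, and forgetting that coordinate identifies this slice with $\Delta_{d-1}(k)$; symmetrically, $\{x_i = 1\}$ meets $\Delta_d(k)$ in the vertices with $v_i = 1$, which after deleting coordinate $i$ is $\Delta_{d-1}(k-1)$ (one fewer one among $d-1$ coordinates). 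The only thing to check is that these $2d$ faces really are facets, i.e.\ have dimension $d-2$; this holds precisely because the hypothesis $2 \le k \le d-2$ keeps both parameters $k$ and $k-1$ in the range $[1,d-2]$ where $\Delta_{d-1}(\cdot)$ is a genuine $(d-2)$-polytope. Since the $2d$ box inequalities are all of the defining inequalities, these are all the facets.

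For part (ii), I would simply evaluate which inequalities are tight at a fixed vertex $\v$, a $0/1$-vector with exactly $k$ ones: the tight box inequalities are $x_i = 0$ at the $d-k$ coordinates with $v_i = 0$ and $x_i = 1$ at the $k$ coordinates with $v_i = 1$. Via the identification from part~(i), this places $\v$ on exactly $d-k$ facets of type $\Delta_{d-1}(k)$ and $k$ facets of type $\Delta_{d-1}(k-1)$, hence on $d$ facets in total.

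Part (iii) is where the real work lies, now specializing to $k = 2$. Here the facets of type $\Delta_{d-1}(2)$ are exactly the $d$ slices $F_i := \{x_i = 0\} \cap \Delta_d(2)$, so I would index a choice of $d-3$ of them by a set $I$ with $|I| = d-3$ and put $J := \{1,\dots,d\} \setminus I$, so $|J| = 3$. The intersection $\bigcap_{i \in I} F_i$ consists of the points of $\Delta_d(2)$ supported on $J$, whose vertices are the three $0/1$-vectors placing both ones among the coordinates of $J$; these three points are affinely independent, so their convex hull is a triangular $2$-face $T$. The crux is then to verify that $I$ is the \emph{entire} facet set of $T$, and this is the step I expect to be the main obstacle, since I must rule out any other facet containing $T$. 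A facet $\{x_i = 0\}$ contains all three vertices of $T$ if and only if $i \notin J$, that is $i \in I$; and a simplex facet $\{x_i = 1\}$ (of type $\Delta_{d-1}(1)$) would have to have some coordinate equal to $1$ at all three vertices, which never happens because each coordinate in $J$ is a one in only two of the three vertices. Thus exactly the $d-3$ chosen facets contain $T$, which completes the proof.
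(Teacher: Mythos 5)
Your proposal is correct and follows essentially the same route as the paper's proof: identifying the facets with the slices $\{x_i=0\}$ and $\{x_i=1\}$, counting zeros and ones of a vertex for (ii), and for (iii) intersecting $d-3$ facets of type $\Delta_{d-1}(2)$ to get the explicit triangle and then checking that no other supporting hyperplane contains it. Your version is slightly more explicit about why the $2d$ slices are genuinely facets, but the substance is the same.
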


 \begin{proof}
	 All three parts are easy to prove. Parts (i) and (ii) are also quite well-known, see \cite[Proposition~6.3.15]{deLoeraRambauSantos2010} and its proof.
	 
     (i) Facets are defined by two types of hyperplanes,
     \begin{align*}
         &H_i^{(0)} = \{ \x \in \R^d \mid x_i = 0 \} \text{ and } \\
         &H_i^{(1)} = \{ \x \in \R^d \mid x_i = 1 \},
     \end{align*}
     for $1 \leq i \leq d$.
     The first type $H_i^{(0)}$ produces facets of the form $\Delta_{d-1}(k)$,
     while the second type $H_i^{(1)}$ produces facets of the form $\Delta_{d-1}(k-1)$.

     (ii) Each vertex has $d-k$ zeros, and thus it lies on $d-k$ facets of the form $\Delta_{d-1}(k)$.
     It also has $k$ ones, and thus it lies on $k$ facets of the form $\Delta_{d-1}(k-1)$.

     (iii) Fix $d-3$ facets of the form $\Delta_{d-1}(2)$.
     Without loss of generality, assume that these $d-3$ facets are defined by the first $d-3$ hyperplanes
     \[
 	H_i^{(0)} = \{ \x \in \R^d \mid x_i = 0 \}, \text{ for } 1 \leq i \leq d-3.
 	\]
     Thus, the face of $\Delta_d(2)$ which they define has the following vertex set.
     \begin{align*}
         &\{\v \in \{0,1\}^d \mid v_1 = \cdots = v_{d-3} = 0, \; v_{d-2} + v_{d-1} + v_{d} = 2 \} \\
         = &\{(\vec{0}, 1, 1, 0), (\vec{0}, 1, 0, 1), (\vec{0}, 0, 1, 1) \} \subset \R^d,
     \end{align*}
     which is a triangular $2$-face.
     This face clearly does not lie on any of the following hyperplanes.
     \[
 	H_i^{(0)} = \{ \x \in \R^d \mid x_i = 0 \}, \text{ for } d-2 \leq i \leq d,
 	\]
     and
     \begin{align*}
         H_i^{(1)} &= \{ \x \in \R^d \mid x_i = 1 \}, \text{ for } 1 \leq i \leq d.
         \qedhere
     \end{align*}
 \end{proof}

\begin{theorem}
    The realization space of $\Delta_d(2)$ is a smooth manifold of dimension
	$\NG{\Delta_d(2)}=\frac{3}{2}(d^2 - d)$.
\end{theorem}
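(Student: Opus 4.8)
The plan is to apply the Degeneracy Criterion (\Cref{thm:degeneracy}), keeping in mind that $\Delta_d(2)$ is a $(d-1)$-polytope, so that the relevant degeneracy parameter is $d-1$ rather than $d$, and linear independence is demanded only of sets of at most $d-1$ vectors. The goal is to exhibit a $(d-1)$-degenerate ordering of the nodes of the vertex--facet incidence graph $\Gamma_{\Delta_d(2)}$ satisfying conditions (i) and (ii) of that criterion. I would order the nodes in three blocks: first the $d$ simplex facets (those of type $\Delta_{d-1}(1)=\Delta_{d-2}$), then the $\binom{d}{2}$ vertices, and finally the $d$ facets of type $\Delta_{d-1}(2)$. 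Using \Cref{prop:hypersimplex_structure}(i)--(ii), I would check that this ordering is $(d-1)$-degenerate: each simplex facet has exactly $d-1$ later neighbors (its $d-1$ vertices), each vertex has $d-2$ later neighbors (the $d-2$ facets of type $\Delta_{d-1}(2)$ through it, its two simplex facets having already been placed), and each $\Delta_{d-1}(2)$-facet has no later neighbor at all.

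Next I would verify the two rank conditions along this ordering. Condition (i) is vacuous at the $\Delta_{d-1}(2)$-facets (they have no later vertices), and at a simplex facet its $d-1$ later vertices are the vertices of a $(d-2)$-simplex, hence affinely independent and, the facet hyperplane missing the origin in any centered realization, linearly independent as well (this is the situation of \Cref{prop:rank_prop}(i), or of the argument in \Cref{prop:three_rank_prop}). The crux is condition (ii): for a vertex $v$, its later neighbors are the $d-2$ facets of type $\Delta_{d-1}(2)$ containing it, and I must show that their normal vectors are linearly independent. Here I would invoke \Cref{prop:rank_prop}(ii) with $k=d-2$: it suffices that some $k-1=d-3$ of these facets be the facet set of a $\big((d-1)-(d-2)+1\big)=2$-face, which is exactly the content of \Cref{prop:hypersimplex_structure}(iii).

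I expect this last point to be the main obstacle, and it is precisely where the simpler \Cref{thm:almost_3degen} does not suffice: after deleting the degree-$(d-1)$ nodes (the $d$ simplex facets), each vertex still has degree $d-2$ in the remaining graph, so that graph fails to be $3$-degenerate once $d\ge 6$, and one genuinely needs the higher linear-independence statement supplied by \Cref{prop:hypersimplex_structure}(iii) together with \Cref{prop:rank_prop}(ii). Granting conditions (i) and (ii), the Degeneracy Criterion yields that $\RC{\Delta_d(2)}$ is a smooth manifold of dimension $\NG{\Delta_d(2)}$. Finally I would read off this number from the incidence data $f_0=\binom{d}{2}$, $f_{d-2}=2d$, and $f_{0,d-2}=d\binom{d}{2}$, obtaining $\NG{\Delta_d(2)}=(d-1)\big(\binom{d}{2}+2d\big)-d\binom{d}{2}=\tfrac{3}{2}(d^2-d)$. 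The remaining case $d=4$, where $\Delta_4(2)$ is the simplicial octahedron, is already covered by \Cref{cor:simple_and_simplicial} and can be noted as a base case.
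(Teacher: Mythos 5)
Your proposal is correct and follows essentially the same route as the paper: the identical three-block ordering (simplex facets, then vertices, then $\Delta_{d-1}(2)$-facets), condition (i) handled via \Cref{prop:rank_prop}(i) at the simplex facets, and condition (ii) at the vertices via \Cref{prop:hypersimplex_structure}(iii) combined with \Cref{prop:rank_prop}(ii), followed by the same computation of $\NG{\Delta_d(2)}$. Your added remarks on why \Cref{thm:almost_3degen} fails for $d\ge 6$ and on the $d=4$ base case are accurate but not needed.
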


Grande, Padrol, Sanyal proved in \cite{grande2018} that the realization space of hypersimplices $\Delta_d(2)$ is topologically a ball of dimension $\frac32 (d^2-d)$.

\begin{proof}
    We will use the Degeneracy Criterion (Theorem \ref{thm:degeneracy}) to prove this statement.
    Each vertex of $\Delta_d(2)$ lies in $d$ facets, and we have $\binom{d}{2}$ vertices. Thus,
    \[
	f_{0,d-2}(\Delta_d(2)) = d\binom{d}{2}.
	\]
    In particular,
    \[
	\NG{\Delta_d(2)} = (d-1)\left( \binom{d}{2} + 2d \right) - d\binom{d}{2} = \frac{3}{2}(d^2 - d).
	\]

    Let $P \subset \R^{d-1}$ be a centered realization of $\Delta_d(2)$.
    The following ordering of the nodes of the vertex-facet incidence graph $\Gamma_P$ is $(d-1)$-degenerate.
    First put all the facets of the form $\Delta_{d-1}(1)$ at the beginning. These are $(d-2)$-simplices.
    Then put all the vertices, and at the end put all the facets of the form $\Delta_{d-1}(2)$.
    By Proposition \ref{prop:rank_prop} (i), condition (i) of the Degeneracy Criterion is satisfied at the simple nodes
    (these are the $(d-2)$-simplices).
    Thus, we only need to check the Degeneracy Criterion conditions at the nodes corresponding to the vertices.
    Let $\v_i$ be a vertex of~$P$.
    The corresponding node is adjacent to exactly $d-2$ later nodes $F_{i_1}, \cdots, F_{i_{d-2}}$.
    These nodes correspond to facets of the form $\Delta_{d-1}(2)$.
    By Proposition \ref{prop:hypersimplex_structure} (iii), any $d-3$ of them are the facet set of a $2$-face.
    Thus, by Proposition \ref{prop:rank_prop} (ii),
    condition (ii) of the Degeneracy Criterion is satisfied at the nodes corresponding to the vertices.
\end{proof}

\section{Negative Results}\label{sec:4dim}
In this section, we discuss negative results of various flavors. The first set of negative results shows that there are $4$-polytopes with the property that their realization spaces are smooth manifolds but the dimension is not the expected one, i.e.~not equal to the natural guess, providing explicit smooth counterexamples to the false claim in \cite{robertson1984}. In the second part, we show that the realization space of the 24-cell is not a smooth manifold, at least in its natural embeddings.

\subsection{The Smallest Polytope $P$ with $\dim \RC{P} \not = \NG{P}$}\label{subsec:three_smallest_counter-examples}
Testing Theorem \ref{thm:almost_3degen} on the database of
all $4$-polytopes with at most $9$ vertices \cite{Firsching-9vertices} gives the following table.

\begin{table}[H] \label{tbl:almost_3degen}
    \begin{center}
    \begin{tabular}{ |c|c|c|c| }
    \hline
        $n$     &   $ |\mathcal{P}_n^4|$    & $|\mathcal{A}_n^4|$   & $|\mathcal{P}_n^4 \setminus \mathcal{A}_n^4|$   \\
        \hline
        5       & 1                         & 1                     &   0     \\
        6       & 4                         & 4                     &   0     \\
        7       & 31                        & 31                    &   0     \\
        8       & 1294                      & 1287                  &   7     \\
        9       & 274148                    & 272668                &   1480  \\
    \hline
    \end{tabular}
    \caption{$\mathcal{P}_n^4$ is the set of all $4$-polytopes with $n$ vertices.
	$\mathcal{A}_n^4$ is the subset of $\mathcal{P}_n^4$ of all polytopes which satisfy the condition in Theorem \ref{thm:almost_3degen}.}
\end{center}
\end{table}

By examining closely the seven $4$-polytopes with $8$ vertices that
do not satisfy the condition in Theorem \ref{thm:almost_3degen},
we were able to show that four of them satisfy the Degeneracy Criterion (Theorem \ref{thm:degeneracy}).
Thus, in this section, we are going to look at the remaining three $4$-polytopes with $8$ vertices:
We will call them $P_1, P_2$ and $P_3$, and we will show that they do not satisfy the Degeneracy Criterion.
We will also find the dimensions of their realizations spaces,
which turn out to be different from the respective natural guess.
Thus, we found the smallest examples (in terms of the number of vertices)
where the dimensions of their realization spaces are different from their natural guesses.
These three polytopes can be constructed from a pyramid over a triangular prism $\mathrm{pyr}(\mathrm{prism}(\Delta))$ by adding a new point as follows:
\begin{enumerate}[(1)]
    \item In $P_1$, the new point should lie beyond the triangular prism facet, on the supporting hyperplane of a simplex facet and beneath all the other facets.
        This polytope has $8$ vertices, $9$ facets and $43$ vertex-facet incidences.
        A realization of $P_1$ is given by the following vertex description.
        \begin{align*}
            \kbordermatrix{
                & \v_0 & \v_1 & \v_2 & \v_3 & \v_4 & \v_5 & \v_6 & \v_7 \\
                & -1 & -1 & -1 & 1 & 1 & 1 & 1 & 1 \\
                & 1 & -1 & 0 & 1 & -1 & 0 & 0 & 0 \\
                & 1 & 1 & -1 & 1 & 1 & -1 & 0 & 0 \\
                & 0 & 0 & 0 & 0 & 0 & 0 & -1 & 1
            }.
        \end{align*}
    \item In $P_2$, the new point should lie beyond the triangular prism facets and beneath all the other facets.
		In other words, $P_2$ is a bipyramid over the triangular prism.
		
        This polytope has $8$ vertices, $10$ facets and $46$ vertex-facet incidences.
        A realization of $P_2$ is given by the following vertex description.
        \begin{align*}
            \kbordermatrix{
                & \v_0 & \v_1 & \v_2 & \v_3 & \v_4 & \v_5 & \v_6 & \v_7 \\
                & -1 & -1 & -1 & 1 & 1 & 1 & 0 & 0 \\
                & 1 & -1 & 0 & 1 & -1 & 0 & 0 & 0 \\
                & 1 & 1 & -1 & 1 & 1 & -1 & 0 & 0 \\
                & 0 & 0 & 0 & 0 & 0 & 0 & -1 & 1
            }.
        \end{align*}

    \item In $P_3$, the new point should lie beyond the triangular prism facet, beyond a simplex facet and beneath all the other facets.
        This polytope has $8$ vertices, $11$ facets and $50$ vertex-facet incidences.
        A realization of $P_3$ is given by the following vertex description.
        \begin{align*}
            \kbordermatrix{
                & \v_0 & \v_1 & \v_2 & \v_3 & \v_4 & \v_5 & \v_6 & \v_7 \\
                & -1 & -1 & -1 & 1 & 1 & 1 & 0 & 0 \\
                & 1 & -1 & 0 & 1 & -1 & 0 & 0 & 0 \\
                & 1 & 1 & -1 & 1 & 1 & -1 & 0 & 0 \\
                & 0 & 0 & 0 & 0 & 0 & 0 & -1 & 1
            }.
        \end{align*}
\end{enumerate}

To start with, we introduce notation for a classical notion of realization spaces.
\begin{definition}\label{def:RS}
    Let $P$ be a labeled $d$-polytope with $n$ vertices.
    The \emph{realization space} of~$P$ is the set
    $$\RS{P} = \{ V \in \R^{d \times n} \mid \conv{V} \textrm{ realizes } P \}.$$
\end{definition}

\begin{remark}\label{remark:Rh_and_RC}
    Our various models of realization spaces that we considered so far fit together nicely. Let $P\subset \R^d$ be a $d$-dimensional polytope and denote by $\wh{P}\subset \R^{d+1}$ the polyhedral cone generated by $\{(1,\x)\in\R^{d+1}\vert \, \x\in P\}$. The centered realization space is diffeomorphic to an open subset of $\RS{P}$ by the map $(V,A)\mapsto V$. The set $\RS{P}$ is diffeomorphic to an open subset of $\Rh(\wh{P})$ by the process of homogenization (with appropriate choice of scaling for the ray and facet description of the resulting cone, namely scaling the columns of these matrices to have norm $1$). In particular, if we can show that $\Rh(\wh{P})$ is a smooth manifold, then the spaces $\RS{P}$ and $\RC{P}$ are smooth manifolds of dimension $\dim \Rh(\wh{P})$. 
\end{remark}

Richter-Gebert introduced the following notion in \cite{Jurgen1996} for polytopes that we also use in a cone version.
\begin{definition}
    A $d$-polytope is \emph{necessarily flat} if any polyhedral embedding of its boundary complex in $\R^n$, where $n \geq d$, has affine dimension at most $d$.
    A $(d+1)$-dimensional closed and pointed polyhedral cone is \emph{necessarily flat} if any polyhedral embedding of its boundary fan in~$\R^n$, where $n \geq d+1$, has affine dimension at most $d+1$.
\end{definition}
Here a \emph{polyhedral embedding} of a $d$-dimensional polyhedral complex $\mathcal{C}$ in $\R^n$ is a mapping of the vertices of $\mathcal{C}$ into $\R^n$ such that the image of every $k$-face of $\mathcal{C}$ is a $k$-dimensional convex polyhedron combinatorially equivalent to that $k$-face, and for no two faces the images intersect in their relative interiors. 

In dimension $2$, the only necessarily flat polytope is the triangle.
In dimension $3$, Richter-Gebert in \cite[Lemma 3.2.6]{Jurgen1996} showed that pyramids, prisms and ``tents'' over $n$-gons are necessarily flat.
Using the same proof provided by Richter-Gebert, one can show that the polyhedral cone arises from a 3-dimensional prism is necessarily flat.
Another related result is by Schwartz \cite[Lemma 2.6]{Schwartz2004} who showed that every simple $d$-polytope is necessarily flat, for $d \geq 3$

\begin{proposition}\label{prop:smallest_examples}
    The realization spaces of $P_1$, $P_2$ and $P_3$ are smooth manifolds of the following dimensions:
    \begin{align*}
        \dim \RC{P_1} = 26  >  25 = \NG{P_1}, \\
        \dim \RC{P_2} = 27  >  26 = \NG{P_2}, \\
        \dim \RC{P_3} = 27  >  26 = \NG{P_3}.
    \end{align*}
\end{proposition}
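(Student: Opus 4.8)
The plan is to prove, for each $P_i$, that the Jacobian of the characteristic map $\Phi_{P_i}$ has constant rank $\mu_i-1$ at every centered realization, exactly one less than the full rank $\mu_i=f_{0,3}(P_i)$ demanded by the Jacobian Criterion (Theorem \ref{thm:jacobian}). A uniform corank of $1$ will force $\RC{P_i}$ to be a smooth manifold whose local dimension is everywhere one more than the natural guess. By the incidence counts recorded in the three constructions (and Proposition \ref{prop:ng}(v) for the bipyramid $P_2$) we have $\NG{P_1}=25$ and $\NG{P_2}=\NG{P_3}=26$, so the targets are $\NG{P_i}+1$. The argument splits into an upper bound on the rank, supplied by the geometry of the triangular prism, and a matching lower bound, supplied by a degeneracy ordering that deliberately omits one incidence.

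For the rank drop I would exploit the \emph{necessarily flat} triangular prism inside each $P_i$: every $P_i$ is obtained from $\pyr(\mathrm{prism}(\Delta))$, and $P_2=\bipyr(\mathrm{prism}(\Delta))$. By Richter-Gebert's theorem that the prism over a triangle is necessarily flat \cite[Lemma 3.2.6]{Jurgen1996}, together with its cone analogue noted above, the six prism vertices span only a $3$-flat in every realization. The crucial step is to convert this rigidity into a single nontrivial linear relation $\sum_{[i,j]}c_{ij}\,\nabla(\v_i^t\a_j-1)=\vec 0$ among the rows of the Jacobian that holds at \emph{every} point of $\RC{P_i}$ -- equivalently, coefficients $c_{ij}$ with $\sum_{j\ni i}c_{ij}\a_j=\vec 0$ at each vertex and $\sum_{i\in j}c_{ij}\v_i=\vec 0$ at each facet -- and to argue that this relation persists on the zero locus of the remaining incidence equations in a neighborhood of each realization. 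This exhibits one incidence equation as a consequence of the others and is the sole source of the extra dimension. Producing this relation explicitly, and checking that the rank drops by exactly one rather than more, is the main obstacle.

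For the matching bound I would invoke Remark \ref{remark:degeneracy}: delete from the vertex-facet incidence graph $\Gamma_{P_i}$ the single edge corresponding to the redundant incidence, and then produce a $4$-degenerate ordering of the remaining graph for which conditions (i) and (ii) of the Degeneracy Criterion (Theorem \ref{thm:degeneracy}) hold at every centered realization, verifying these via Propositions \ref{prop:rank_prop} and \ref{prop:three_rank_prop}; the explicit coordinates listed for each $P_i$ are useful for checking the handful of non-generic independence conditions. This yields a $((\mu_i-1)\times 4(n+m))$ submatrix of the Jacobian of full rank $\mu_i-1$ at all realizations, so the common zero set $N$ of the corresponding $\mu_i-1$ equations is, near $\RC{P_i}$, a smooth manifold of dimension $4(n+m)-(\mu_i-1)=\NG{P_i}+1$; in particular $\dim\RC{P_i}\le\NG{P_i}+1$ and the Jacobian rank is at least $\mu_i-1$.

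Combining the two halves gives rank exactly $\mu_i-1$ everywhere on $\RC{P_i}$. The redundancy established in the flatness step shows that the one omitted equation vanishes identically on $N$ near each realization, so that $\RC{P_i}$ is an open subset of $N$ (the remaining strict inequalities of Proposition \ref{prop:semialgebraic_desc} are open conditions); hence $\RC{P_i}$ is itself a smooth manifold of dimension $\NG{P_i}+1>\NG{P_i}$, as claimed. The three polytopes are handled by the same three steps, differing only in how the eighth vertex is attached; I expect the degeneracy ordering to need small case-by-case adjustments while the flatness mechanism stays identical. Alternatively, one could run the entire analysis in the homogeneous cone model $\Rh(\wh{P_i})$ through Remark \ref{remark:Rh_and_RC} and the homogeneous Degeneracy Criterion (Theorem \ref{thm:degeneracyhom}), where splitting off the apex directions may make the single redundancy easier to pin down.
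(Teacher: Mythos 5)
You have correctly isolated the key geometric input---the triangular prism is necessarily flat---and your target (Jacobian corank exactly $1$, hence dimension $\NG{P_i}+1$) is the right one. But both load-bearing steps of your plan are left open, and at least one is harder than you suggest. The step you yourself call ``the main obstacle'' is a genuine gap: to conclude that $\RC{P_i}$ is open in the manifold $N$ cut out by $\mu_i-1$ of the incidence equations, you need the omitted equation to vanish identically on $N$ near each realization, which forces the syzygy $\sum c_{ij}\,d(\v_i^t\a_j-1)=0$ to hold on a whole neighborhood of $\RC{P_i}$ \emph{inside $N$}, i.e.\ at configurations that are not realizations of $P_i$ at all; flatness of the prism is a statement about honest realizations and does not obviously propagate to such configurations, so a genuinely new argument is needed here. (Note also that constant corank on the fiber $\Phi_{P_i}^{-1}(\vec 0)$ alone does not imply the fiber is smooth.) The lower bound is not routine either: in $P_2$ every quadrilateral $2$-face has four linearly \emph{dependent} vertices, and every edge of the prism (horizontal or vertical) lies in exactly four facets whose normals span only a $3$-dimensional space, because they lie in a $2$-dimensional affine subspace avoiding the origin. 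These dependencies occur at all six prism vertices and all six quadrilateral-pyramid facets simultaneously, so the obvious orderings violate conditions (i) and (ii) of Theorem~\ref{thm:degeneracy} in many places at once; it is far from clear that deleting a single edge of $\Gamma_{P_2}$ as in Remark~\ref{remark:degeneracy} suffices, and you would in any case have to exhibit an explicit ordering and verify each block via Propositions~\ref{prop:rank_prop} and~\ref{prop:three_rank_prop}.

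The paper uses the flatness in a different and much more economical way: it never analyzes the Jacobian of $P_i$ at all. Because the cone over the triangular prism is necessarily flat, every realization of $\wh{P_i}$ consists of a realization of the cone $C$ over $\pyr(\mathrm{prism}(\Delta))$ together with one additional ray, which ranges over an open subset of dimension $4$ (dimension $3$ for $P_1$, owing to the extra coplanarity condition) varying smoothly with the base realization. Since $\Rh(C)$ is a smooth $23$-dimensional manifold by the Homogeneous Degeneracy Criterion (Theorem~\ref{thm:degeneracyhom}), this fibration yields smoothness together with $\dim\RC{P_2}=\dim\RC{P_3}=23+4=27$ and $\dim\RC{P_1}=23+3=26$ directly, via Remark~\ref{remark:Rh_and_RC}. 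If you want to keep your Jacobian-centric formulation, the cleanest repair is to establish this parametrization first and then read off the corank-one statement as a corollary, rather than trying to produce the syzygy and the pruned degeneracy ordering by hand.
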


\begin{proof}
    The boundary fan of each of the cones $\wh{P_1}, \wh{P_2}$ and $\wh{P_3}$ contains the boundary fan of the cone over a triangular prism (with a missing simplex facet for $\wh{P_1}$), which we call $\mathcal{F}$. 
    The fan $\mathcal{F}$ has dimension $4$ in any realization of $\wh{P_1}, \wh{P_2}$ and $\wh{P_3}$ since the cone of a triangular prism is necessarily flat. Due to convexity, we know that the span of $\mathcal{F}$ has dimension $4$ in any realization of $\wh{P_1}, \wh{P_2}$ and $\wh{P_3}$.
    Thus, every realization of $\wh{P_i}$ is obtained from a realization of $C$, where $C$ is the cone of a pyramid over a triangular prism.
    The apex ray $\x$ can be chosen in an open subset of a linear space that varies differentiably with the realization of $C$.
    In $P_2$ and $P_3$, this open subset has dimension $5-1$ since $\x$ should have norm $1$ and the only other constraints are strict inequalities which correspond to the conditions that $\x$ lies beneath or beyond a facet.
    In $P_1$, this open subset has dimension $5-2$ since, in addition to the constraints mentioned in $P_2$ and $P_3$, $\x$ should lie on the supporting hyperplane of a simplicial facet of $C$.
    We are done knowing that $\Rh(C)$ is a smooth manifold of dimension $23$ by the Homogeneous Degeneracy Criterion \Cref{thm:degeneracyhom}, and using \Cref{remark:Rh_and_RC}.
\end{proof}

\begin{theorem}\label{thm:4-polytopes-up-to-8-vertices}
    Let $P$ be a $4$-polytope with at most $8$ vertices. Then the realization space $\RC{P}$ is a smooth manifold.
    Its dimension is equal to $\NG{P}$, except for the three polytopes $P_1, P_2$ or $P_3$,
	where the dimension of the realization space is $\NG{P_i}+1$.
    \qed
\end{theorem}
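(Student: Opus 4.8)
The plan is to combine the computer-generated census in Table~\ref{tbl:almost_3degen} with the three criteria established above, organizing the argument by the number of vertices $n$.

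For $n\le 7$ the table records $|\mathcal{P}_n^4|=|\mathcal{A}_n^4|$, so every such $4$-polytope satisfies the hypothesis of Theorem~\ref{thm:almost_3degen}; hence each $\RC{P}$ is a smooth manifold of dimension exactly $\NG{P}$, and there are no exceptions in this range.

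For $n=8$ I would split $\mathcal{P}_8^4$ into three parts. The $1287$ polytopes counted by $\mathcal{A}_8^4$ again satisfy Theorem~\ref{thm:almost_3degen} and so give smooth manifolds of dimension $\NG{P}$. Of the seven polytopes in $\mathcal{P}_8^4\setminus\mathcal{A}_8^4$, four are treated by the finer, non-combinatorial Degeneracy Criterion (Theorem~\ref{thm:degeneracy}): for each I would produce an explicit $4$-degenerate ordering of the incidence graph $\Gamma_P$ and check conditions (i) and (ii), certifying the needed linear independences at every centered realization by means of Propositions~\ref{prop:rank_prop} and~\ref{prop:three_rank_prop}; this again yields dimension $\NG{P}$. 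The last three are exactly $P_1,P_2,P_3$, for which Proposition~\ref{prop:smallest_examples} supplies smoothness together with the anomalous dimension $\NG{P_i}+1$. Collecting the cases gives the stated dichotomy.

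The main obstacle is the handling of the seven exceptional $8$-vertex polytopes. Because the Degeneracy Criterion is not purely combinatorial, one cannot simply read off a good ordering from $\Gamma_P$; instead one must exhibit, for each of the four, an ordering whose late-neighbor sets are the vertex set or facet set of a face of the correct dimension, so that Proposition~\ref{prop:rank_prop} applies uniformly across $\RC{P}$. The complementary---and for the theorem essential---claim is that no such ordering exists for $P_1,P_2,P_3$, which is exactly why their realization spaces fail to have the natural-guess dimension; this obstruction is already quantified by Proposition~\ref{prop:smallest_examples}. I also note that the whole split relies on the completeness of the enumeration \cite{Firsching-9vertices} underlying Table~\ref{tbl:almost_3degen}, so the proof is in the end a verification rather than a closed-form argument.
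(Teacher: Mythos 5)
Your proposal follows essentially the same route as the paper: the theorem is stated with only a \qed because its proof is exactly the combination of Table~\ref{tbl:almost_3degen}, Theorem~\ref{thm:almost_3degen}, the Degeneracy Criterion for four of the seven exceptional $8$-vertex polytopes, and Proposition~\ref{prop:smallest_examples} for $P_1,P_2,P_3$. One small remark: the failure of $P_1,P_2,P_3$ to satisfy the Degeneracy Criterion is not what establishes the anomalous dimension (the criterion is only sufficient); the essential input is the positive computation in Proposition~\ref{prop:smallest_examples} via necessary flatness of the triangular prism, which you do invoke correctly.
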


\begin{proposition}\label{prop:simplenecessarilyflat}
    For $d\geq 3$, let $Q$ be a simple $d$-polytope that is not a simplex.
    Then, the realization space of $P:=\bipyr(Q)$, the bipyramid over $Q$, is also a manifold but of dimension strictly greater than $\NG{P}$.
\end{proposition}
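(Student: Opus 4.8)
The plan is to follow the strategy of Proposition~\ref{prop:smallest_examples}, generalizing the argument given there for the bipyramid $P_2$: I work in the homogeneous model $\Rh(\wh{P})$ and transfer the conclusion to $\RC{P}$ through Remark~\ref{remark:Rh_and_RC}. The first and conceptually central step is to pin down the equator. The boundary complex of $P=\bipyr(Q)$ contains a subcomplex isomorphic to the boundary complex $\partial Q$ (the faces of the equatorial copy of $Q$; its facets are exactly the ridges of $P$ along which the two half-pyramids meet). In any realization this subcomplex is a polyhedral embedding of $\partial Q$, and since $Q$ is a simple $d$-polytope with $d\ge 3$ it is necessarily flat; hence the equator spans only a hyperplane, and after homogenizing, the subfan $\mathcal{F}\cong\wh{Q}$ of the boundary fan of $\wh{P}$ has dimension exactly $d+1$ in every realization.

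Consequently every realization of $\wh{P}$ is obtained from a realization of the cone $C:=\wh{\pyr(Q)}$ over the upper half-pyramid -- which already carries the flat equator together with one apex -- by adjoining the second apex ray $\x$. As in Proposition~\ref{prop:smallest_examples}, $\x$ ranges over an open subset of the unit sphere in $\R^{d+2}$ that is cut out by the strict beneath/beyond inequalities, so it has dimension $(d+2)-1=d+1$ and varies smoothly with the realization of $C$. To see that $\Rh(C)$ is itself a smooth manifold I apply Theorem~\ref{thm:almost3degenhom}: deleting from the ray--facet incidence graph of $\pyr(Q)$ the nodes of degree $d+1$ removes every base vertex (each lies in the base facet and in the $d$ pyramids over the facets of $Q$ through it, for total degree $d+1$), and -- because $Q$ is not a simplex -- leaves the apex, the base facet, and the pyramids over the non-simplex facets of $Q$. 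What remains is a star centered at the apex together with one isolated node, which is $1$-degenerate, hence $3$-degenerate. Theorem~\ref{thm:almost3degenhom} then gives that $\Rh(C)$ is a smooth manifold of dimension $\NG{\pyr(Q)}$, which by Proposition~\ref{prop:ng}(iv) and $\NG{Q}=d\,f_{d-1}(Q)$ equals $d\,f_{d-1}(Q)+2(d+1)$. Adjoining the second apex then exhibits $\Rh(\wh{P})$, and with it $\RC{P}$, as a smooth manifold of dimension
\[
\dim\RC{P}=d\,f_{d-1}(Q)+3(d+1).
\]

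It remains to compare this with the natural guess. Using Proposition~\ref{prop:ng}(v) (applied with $P$ of dimension $d+1$) one finds
\[
\dim\RC{P}-\NG{P}=(d-1)f_0(Q)-d\,f_{d-1}(Q)+(d+1).
\]
I bound this below with the dual (simple) form of the Lower Bound Theorem, $f_0(Q)\ge (d-1)f_{d-1}(Q)-(d+1)(d-2)$, which yields
\[
\dim\RC{P}-\NG{P}\ \ge\ (d^2-3d+1)\big(f_{d-1}(Q)-(d+1)\big).
\]
Since $d^2-3d+1\ge 1$ for $d\ge 3$ and $f_{d-1}(Q)\ge d+2$ because $Q$ is not a simplex, the right-hand side is positive, so the dimension is strictly larger than the natural guess.

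The main obstacle is not any single computation but making the reduction rigorous: everything hinges on the necessary flatness of the equator (without it $\wh{P}$ could have realizations spanning $d+2$ dimensions and the clean ``base plus apex'' fibration would break down), and on verifying that it is $\pyr(Q)$ -- not $Q$ itself -- that falls under the almost-$3$-degenerate criterion. Confirming that the fibration adds exactly $d+1$ smooth dimensions (the second apex ray, with the new lower facet normals determined smoothly by the vertices) is the place where the argument must be carried out with the same care as in Proposition~\ref{prop:smallest_examples}; the final inequality, though it looks delicate, collapses neatly once the Lower Bound Theorem is inserted.
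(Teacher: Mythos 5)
Your proposal is correct and follows essentially the same route as the paper: necessary flatness of the simple polytope $Q$ (Schwartz) reduces every realization of $\bipyr(Q)$ to a flat realization of $Q$ plus two apex choices, and the Lower Bound Theorem together with $f_{d-1}(Q)\ge d+2$ yields the strict inequality. The only cosmetic differences are that you certify smoothness by applying the homogeneous almost-$3$-degenerate criterion to the cone over $\pyr(Q)$, whereas the paper builds directly on $\RC{Q}$ being a smooth manifold of dimension $d f_{d-1}(Q)$ for simple $Q$, and that your factored form $(d^2-3d+1)\bigl(f_{d-1}(Q)-(d+1)\bigr)$ of the final bound is an algebraically equivalent rearrangement of the paper's estimate.
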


\begin{proof}
    Using Proposition \ref{prop:ng}(v), we have
    \[\NG{P} = 2\dim \RC{Q} - (d-1)f_0(Q) + 2(d+1).\]
    By Schwartz \cite[Lemma 2.6]{Schwartz2004}, $Q$ is necessarily flat.
    Thus, any realization of $P$ can be obtained by embedding a realization of $Q$ into a hyperplane $H$ in $\R^{d+1}$
    and then adding two points $\v$ and $\v'$, each in a different open half space of $H$,
    such that the segment $[\v, \v']$ intersects the relative interior of $Q$. In particular,
    \[\dim \RC{P} = \dim \RC{Q} + 3(d+1).\]
    Thus,
    \[\dim \RC{P} - \NG{P} = (d+1) + (d-1)f_0(Q) - \dim \RC{Q}.\]
    Since $Q$ is simple,
    \[\dim \RC{P} - \NG{P} = (d+1) + (d-1)f_0(Q) - d f_{d-1}(Q).\]
    We are done if we show that the right-hand-side is positive.
    For this we use the Lower Bound Theorem by Barnette \cite{barnette1971} \cite{Brondsted}, to derive
    \begin{align*}
        f_0(Q) &\geq (d-1)f_{d-1}(Q) - (d+1)(d-2) \\
               &= \tfrac{d}{d-1} f_{d-1}(Q) + \big((d-1) - \tfrac{d}{d-1}\big) f_{d-1}(Q) - (d+1)(d-2) \\
               &> \tfrac{d}{d-1} f_{d-1}(Q) + \big((d-1) - \tfrac{d}{d-1}\big) (d+1) - (d+1)(d-2) \\
               &= \tfrac{d}{d-1} f_{d-1}(Q) - \tfrac{d+1}{d-1},
    \end{align*}
    where the strict inequality comes from the fact that $Q$ is not a simplex.
\end{proof}

Thus Proposition~\ref{prop:simplenecessarilyflat} implies that the realization space
is a manifold of dimension greater than $\NG{P}$ both for the bipyramid over a triangular prism $P_2$ of Theorem \ref{thm:4-polytopes-up-to-8-vertices},
as well as for the bipyramids over duals of cyclic polytopes of Example~\ref{examples:bipyramid-over-dual-to-cyclic}.
	
\subsection{The 24-cell}\label{subsec:24cell}
Arguably the 24-cell $\twentyfourcell$
is one of the most interesting and unique examples in polytope theory.
It was discovered by Ludwig Schläfli around 1850, but his work was published only in 1901 \cite{Schlafli1901}.
For a classical discussion see e.g.\ Coxeter \cite{Coxeter1963}.
Its symmetry group is the Coxeter--Weyl group $F_4$ of order $1152$.
The 24-cell is unique in many ways.
For example, it is the only centrally-symmetric self-dual regular polytope.
It is also the only regular polytope that is not simple or simplicial.
Thus, in particular, by Corollary \ref{cor:simple_and_simplicial}
for every regular polytope $P$ --- except for possibly the 24-cell --- 
the realization space is a smooth manifold of dimension $\NG{P}$.

The 24-cell has $24$ vertices (and $24$ facets, whence the name) and $144$ vertex-facet incidences.
Thus the Jacobian Matrix has the format $f_{0,3}\times d(f_0+f_3)=144 \times 192$.
Full rank would mean full row rank $144$, and the natural guess is
\[
	\NG{\twentyfourcell} = 192-144 = 48.
\]
The $24$ facets of the 24-cell are octahedra (which have $6$ vertices each), 
and each vertex of the 24-cell lies in exactly six of these octahedra.
Thus its vertex-facet incidence graph is $6$-regular, so it is not $4$-degenerate,
and thus the Degeneracy Criterion cannot be applied.
Instead, we will apply the Jacobian Criterion at specific realizations.

\begin{theorem}[Dimension estimates for the realization space of the 24-cell]
The dimension of the realization space of the 24-cell satisfies the estimates
\[
	48 \leq \dim \RC{\twentyfourcell} \leq 52.
\]
\end{theorem}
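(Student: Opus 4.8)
The plan is to prove the two inequalities separately, both through the Jacobian of the characteristic map $\Phi := \Phi_{\twentyfourcell}$, which has format $\mu \times d(f_0+f_{d-1}) = 144 \times 192$ and for which the natural guess is $\NG{\twentyfourcell} = 192-144 = 48$. For the lower bound I would invoke the Jacobian Criterion (Theorem~\ref{thm:jacobian}) at one cleverly chosen realization; for the upper bound I would invoke the corank estimate in the remark following Theorem~\ref{thm:jacobian}, but uniformly over \emph{all} realizations.

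\textbf{Lower bound.} First I would write down one explicit centered realization $(V_0,A_0) \in \RC{\twentyfourcell}$, most conveniently a member of the new parametrized families of realizations constructed earlier in this section, chosen to be as far as possible from the regular (symmetric) realization, since the regular 24-cell is exactly the point where full rank is expected to fail. Then I would verify that $J_\Phi(V_0,A_0)$ has full row rank $144$, for instance by exhibiting one nonvanishing $144 \times 144$ minor; at a realization with explicit (rational) coordinates this is a finite exact computation. Full row rank at $(V_0,A_0)$ lets Theorem~\ref{thm:jacobian} apply, so $\RC{\twentyfourcell}$ is locally a smooth manifold of dimension $\NG{\twentyfourcell}=48$ there, whence $\dim \RC{\twentyfourcell} \geq 48$.

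\textbf{Upper bound.} Here I would show that $\rank J_\Phi(V,A) \geq 140$ at \emph{every} $(V,A) \in \RC{\twentyfourcell}$. Granting this, the corank bound in the remark after Theorem~\ref{thm:jacobian} gives $\dim_{(V,A)}\RC{\twentyfourcell} \leq 192-140 = 52$ at each point, and since the dimension of a semi-algebraic set is the maximum of its local dimensions, $\dim \RC{\twentyfourcell} \leq 52$ follows. The subtle point is that the rank bound must hold at all realizations, not just generically, since a drop of the rank on a special (symmetric) stratum could in principle raise the local dimension there. Equivalently, one must bound by $4$ the dimension of the space of linear dependencies among the $144$ rows, i.e.\ the cokernel of $J_\Phi$: a vector $(\lambda_{ij})$ lies in it precisely when $\sum_{j \colon \v_i \in F_j}\lambda_{ij}\a_j = \vec{0}$ for every vertex $\v_i$ and $\sum_{i \colon \v_i \in F_j}\lambda_{ij}\v_i = \vec{0}$ for every facet $F_j$.

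\textbf{Main obstacle.} This is where the 24-cell resists the general machinery. Its vertex-facet incidence graph is $6$-regular, hence not $4$-degenerate, and one checks that deleting only $4$ edges cannot make it $4$-degenerate (a $6$-regular graph always retains a subgraph of minimum degree at least $5$ after so few deletions); so neither the Degeneracy Criterion (Theorem~\ref{thm:degeneracy}) nor its edge-deletion refinement (Remark~\ref{remark:degeneracy}) can certify $140$ everywhere-independent rows. Instead I would analyze the cokernel directly: each octahedral facet $F_j$ forces $(\lambda_{ij})_{i \in F_j}$ into the $2$-dimensional space of linear dependencies of its six vertices (which span a hyperplane off the origin, hence have rank $4$), each vertex figure imposes the dual $2$-dimensional constraint on the six incident facet normals, and the crux is to show that, glued together over the whole self-dual incidence structure of the 24-cell, these local constraints leave a solution space of dimension at most $4$ at every realization. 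This coordinate-independent combinatorial-geometric bookkeeping is the heart of the argument and the step I expect to be hardest.
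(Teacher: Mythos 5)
Your lower bound argument is exactly the paper's: exhibit a non-regular realization (the paper uses its new $1$-parameter families from Proposition~\ref{prop:24-cell}(3)), verify full row rank $144$ of the Jacobian there by exact computation, and apply Theorem~\ref{thm:jacobian}. That half is fine.

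The upper bound, however, contains a genuine error and then a gap. You assert that deleting only $4$ edges cannot make the $6$-regular incidence graph $\Gamma_{\twentyfourcell}$ $4$-degenerate, "since a $6$-regular graph always retains a subgraph of minimum degree at least $5$ after so few deletions." This is false, and it is precisely the route the paper takes. Choose two adjacent octahedra $F,G$ and two vertices $\v,\u$ of the triangular ridge $F\cap G$, and delete the four incidence edges among $\{\v,\u,F,G\}$. In the resulting graph each of these four nodes has degree $4$, so they can head the ordering; every further node is appended only once it already has an edge (two earlier vertices) or a ridge (two earlier facets) among the earlier nodes, hence has at least $2$ earlier neighbours and at most $4$ later ones. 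The process exhausts all nodes because the edge--ridge graph of the 24-cell is connected (Sallee), so the ordering is $4$-degenerate. Your "minimum degree $5$ subgraph" claim conflates the minimum degree of the whole graph with the non-emptiness of the $5$-core; the iterative peeling that empties the $5$-core is exactly the reverse of this ordering. (Already $K_7$ minus a suitable $4$ edges is $4$-degenerate, so the general claim fails too.) The geometric conditions (i) and (ii) of Theorem~\ref{thm:degeneracy} hold for this ordering because the at most $4$ later vertices of any octahedral facet omit two vertices forming an edge, hence are affinely independent, hence linearly independent by Proposition~\ref{prop:rank_prop}; dually for facet normals. Remark~\ref{remark:degeneracy} with $r=4$ then gives $\dim\RC{\twentyfourcell}\le \NG{\twentyfourcell}+4=52$. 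Having (wrongly) dismissed this approach, you propose instead to bound the cokernel of $J_{\Phi}$ directly at every realization, but you explicitly leave the decisive step --- that the glued local dependency spaces intersect in dimension at most $4$ everywhere --- unproven, calling it "the step I expect to be hardest." As it stands, the upper bound is therefore not established.
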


\begin{proof}
    For the lower bound, we give a realization of the 24-cell where the Jacobian matrix has full rank
    (See the third item of the following proposition).
    
    For the upper bound, we use the Remark \ref{remark:degeneracy} to find a $(140 \times 192)$-submatrix of the Jacobian matrix, which has full rank at all centered realizations.
    For this, we construct an ordering of the nodes of the vertex-facet incidence graph of ${\twentyfourcell}$.
    Start with four nodes $\v, \u, F, G$ such that $\v, \u \in F \cap G$.
    For two vertices in the ordering which form an edge, add the facets which contain it and are not already in the ordering.
    For two facets in the ordering which intersect in a ridge, add the vertices which form the ridge and are not already in the ordering.
    Repeat this until all the vertices and the facets are ordered.
    This process does not get stuck because the edge-ridge graph is connected; see Sallee \cite{sallee1967}.
    If we remove the four edges between the nodes $\{\v, \u, F, G\}$, we get a $4$-degenerate ordering which satisfies the degeneracy criterion.
    A facet in this ordering was added because an edge of it appeared before.
    So for each facet, there are at most $4$ of its vertices appearing after it,
    and these four vertices are always affinely independent because they miss two vertices which form an edge of the octahedral facet.
    Similar argument for the vertices.
    We are done by Remark \ref{remark:degeneracy}.
\end{proof}

The group of projective transformations on $\R^4$ has dimension $24$,
thus this establishes that the dimension of the realization space of $\twentyfourcell$ modulo projective transformations satisfies
\[
		\dim(\RC{\twentyfourcell}/\PGL{4})\ \ge\ \ 24,
\]
where the best previous estimate, due to Paffenholz \cite{Paffenholz-diss,Paffenholz2006} was
\[
		\dim(\RC{\twentyfourcell}/\PGL{4})\ \ge\ \  4,
\]
and thus
\[
		\dim(\RC{\twentyfourcell})\ \ge\ \ 28.
\]

\begin{proposition}\label{prop:24-cell}
	The Jacobian matrix for the 24-cell $\twentyfourcell$ has different ranks at different realizations:
	\begin{enumerate}[\rm(1)]
		\item For any realization of the 24-cell as a regular polytope, the Jacobian matrix has rank $140$ (i.e., rank deficit $4$).
		\item For the non-regular Paffenholz realizations of the 24-cell,
            the Jacobian matrix has rank $\leq 142$ (i.e., rank deficit $\geq 2$).
        \item For eight $1$-parameter families of non-regular realizations
            of the 24-cell with a symmetry group of size $24$,
            the Jacobian has full rank $144$ (i.e., full rank).
	\end{enumerate}
\end{proposition}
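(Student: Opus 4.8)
The plan is to convert each rank statement into a computation of the \emph{left kernel} of the Jacobian, which has a clean geometric description. Working in compressed notation, a tuple $(\lambda_{[i,j]})_{\v_i\in F_j}\in\R^{\mu}$ lies in the left kernel of $J_{\Phi_P}(V,A)$ if and only if
\[
\sum_{j\,:\,\v_i\in F_j}\lambda_{[i,j]}\,\a_j=\vec{0}\ \text{ for every vertex }\v_i,\qquad \sum_{i\,:\,\v_i\in F_j}\lambda_{[i,j]}\,\v_i=\vec{0}\ \text{ for every facet }F_j.
\]
I will call such a tuple a \emph{stress}. Since the Jacobian has format $144\times192$, its rank equals $144$ minus the dimension of the space of stresses, so each of the three claims is equivalent to determining this dimension at the respective realization. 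Moreover, the proof of the preceding theorem exhibits $140$ rows that are linearly independent at \emph{every} centered realization; hence the rank is always at least~$140$ and the stress space has dimension at most~$4$ throughout. Thus claims~(1) and~(2) reduce to \emph{exhibiting} stresses (four, respectively two, linearly independent ones), whereas claim~(3) is the genuinely different assertion that \emph{no} nonzero stress exists.

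For claim~(1) it then suffices to produce a $4$-dimensional space of stresses at a regular realization, and combined with $\rank\ge140$ this forces $\rank=140$. Here I would use the symmetry group $F_4$: the stress space is an $F_4$-subrepresentation of the permutation module $\R^{\mu}$ on the vertex--facet incidences, and the two stress equations are $F_4$-equivariant maps into copies of $\R^4$, so Schur's lemma reduces the kernel computation to a few small isotypic blocks. Alternatively, in the root-system coordinates (vertices the $24$ vectors $\pm e_i\pm e_j$, facet normals $\pm e_i$ and $\tfrac12(\pm1,\pm1,\pm1,\pm1)$) the per-vertex and per-facet stress equations are small explicit linear systems that can be solved by hand and then matched up globally. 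Claim~(2) follows the same ``exhibit a stress'' pattern: I would write down two linearly independent stresses adapted to the residual symmetry of the Paffenholz realizations, yielding a stress space of dimension at least~$2$ and hence $\rank\le142$.

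Claim~(3) is the crux and the hardest part, because it asserts that the stress space is \emph{trivial} along each of the eight families, and showing that a kernel vanishes cannot be done merely by writing down solutions. My plan is to exploit that full rank is an open condition---the non-vanishing of some fixed $144\times144$ minor---so that it suffices to certify $\rank=144$ at representative realizations and then control the finitely many parameter values at which the chosen minor could drop. The order-$24$ symmetry of these families is the essential tool: it block-diagonalizes the Jacobian along the isotypic decomposition, reducing the vanishing of the whole stress space to the vanishing of several much smaller blockwise kernels, or equivalently producing a single explicit minor that is rational (and not identically zero) in the family parameter. I expect the main obstacle to be exactly this symmetry bookkeeping: correctly identifying the blocks, and verifying that each has trivial relevant kernel for all but isolated parameter values, so that full rank holds along the entire one-parameter family rather than just generically.
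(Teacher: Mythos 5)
Your reformulation via the left kernel is correct and genuinely clarifying: a vector $\lambda\in\R^{144}$ annihilates the rows of $J_{\Phi_P}$ exactly when it satisfies the two families of ``stress'' equations you write, so $\rank J_{\Phi_P}=144-\dim(\text{stress space})$; and your observation that the $(140\times 192)$-submatrix exhibited in the proof of the preceding dimension-estimate theorem bounds the stress space by $4$ at \emph{every} centered realization is a nice point the paper does not make explicit. This correctly splits the proposition into an ``exhibit stresses'' half (items (1) and (2)) and a ``no nonzero stress'' half (item (3)). The paper itself proceeds differently: it writes down explicit coordinates for the regular realization, for Paffenholz's $4$-parameter family, and for eight newly constructed $1$-parameter families, and then verifies the three rank statements by symbolic computation in \textsc{SageMath}.

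As it stands, though, your proposal has two genuine gaps. First, item (3) refers to ``eight $1$-parameter families of non-regular realizations with a symmetry group of size $24$'' that are not defined anywhere before the proposition; constructing them is the substantive content of the paper's proof of (3) (start from the $4$-cube $[-1,1]^4$, add eight points beyond its facets subject to prescribed order-$24$ symmetries, derive the conditions $b^2=c^2=d^2=-a^2+2a$ with $1<a\le 2$, and parametrize rationally). Without producing these families there is nothing on which to run your minor/openness argument, so (3) cannot even be stated precisely, let alone proved. Second, none of the verifications is actually carried out: you do not exhibit the four independent stresses at the regular realization (so (1) is unproved even granted the upper bound of $4$), you do not write down the two stresses for the non-regular Paffenholz realizations, and for (3) you yourself flag the isotypic decomposition and the control of exceptional parameter values as unresolved. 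The $F_4$-equivariance argument for (1) is plausible and would be a more conceptual route than the paper's computation---the stress space is indeed a subrepresentation of the permutation module on incidences once the action on $\R^4$ is accounted for---but until the blocks are identified and their kernels computed, the proposal remains a strategy outline rather than a proof.
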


\begin{proof}~

\begin{enumerate}[(1)]
    \item A regular realization of $\twentyfourcell$ is given by the following vertex description.
    \begin{align*}
        V_\mathrm{reg} &= \conv \left\{ \pm \vec{e}_i \pm \vec{e}_j \in \R^4 \mid 1 \leq i \leq j \leq 4 \right\}.
    \end{align*}
    At this realization, the Jacobian matrix has rank $140$, which is not full.

    \item Paffenholz \cite[Table 3.6]{Paffenholz-diss} \cite[Table 4.4]{Paffenholz2006} constructed a $4$-parameter family of realizations of the 24-cell.
    One way to see his construction is to start with the standard $4$-cube $[-1,1]^4$,
    and choose a point $(a,b,c,d) \in (-1,1)^4$ in the interior of that cube.
    Now reflect this point through the $8$ supporting hyperplanes of the facets of that cube to get $8$ new points.
    The convex hull of the $16$ vertices of the standard cube
    and the $8$ new points gives the $4$-parameter family.
    The rows of the following matrix describe the vertices of this $4$-dimensional family of realizations.

    \begin{table}[H]
        \begin{align*}
        \left[\begin{array}{rrrr}
            -1 & -1 & -1 & -1 \\
            1 & 1 & -1 & -1 \\
            1 & -1 & 1 & -1 \\
            -1 & 1 & 1 & -1 \\
            1 & -1 & -1 & 1 \\
            -1 & 1 & -1 & 1 \\
            -1 & -1 & 1 & 1 \\
            1 & 1 & 1 & 1 \\
            1 & -1 & -1 & -1 \\
            -1 & 1 & -1 & -1 \\
            -1 & -1 & 1 & -1 \\
            1 & 1 & 1 & -1 \\
            -1 & -1 & -1 & 1 \\
            1 & 1 & -1 & 1 \\
            1 & -1 & 1 & 1 \\
            -1 & 1 & 1 & 1 \\
            a & b & c & -d - 2 \\
            a & b & -c + 2 & d \\
            a & b & c & -d + 2 \\
            a & b & -c - 2 & d \\
            a & -b + 2 & c & d \\
            -a - 2 & b & c & d \\
            a & -b - 2 & c & d \\
            -a + 2 & b & c & d
        \end{array}\right].
        \end{align*}
    \end{table}

    If all the parameters are zero, we get a regular realization,
    and the Jacobian matrix of the corresponding characteristic map at this realization has rank $140$.
    Otherwise, using \textsc{SageMath} \cite{sagemath}, we found two linear dependencies between the rows of the (symbolic) Jacobian matrix.

    \item Finally, we give a new construction of eight $1$-parameter families of realizations of the 24-cell.
	
    To get these families, start with the standard $4$-cube $[-1,1]^4$.
    Let $\v_i$ denote a point beyond the facet of $C_4$ defined by
    $\{ \x \in \R^4 \mid x_i = 1 \}$ and beneath all the other facets.
    Similarly, let $\u_i$ denote a point beyond the facet of $C_4$ defined by
    $\{ \x \in \R^4 \mid x_i = -1 \}$ and beneath all the other facets.
    Our goal is to construct a 24-cell whose vertex set is
    $\{-1,1\}^4 \cup \{\v_1, \cdots, \v_4, \u_1, \cdots, \u_4\}$
    such that it has the following symmetries:
    \begin{align*}
        \u_i = -\v_i \quad& \forall 1 \leq i \leq 4. \\
        \v_i \mapsto \v_j \mapsto \u_i \mapsto \u_j \mapsto \v_i \quad& \forall 1 \leq i \not= j \leq 4. \\
        \v_i \mapsto \u_j \mapsto \u_i \mapsto \v_j \mapsto \v_i \quad& \forall 1 \leq i \not= j \leq 4.
    \end{align*}
    Letting $\v_1 = (a,b,c,d)$ and looking at the symmetries of $C_4$ which map the points as described above,
    we see that we have only two options for the coordinates of the $\v_i$'s.

    \begin{minipage}{0.5\textwidth}
        \begin{align*}
        \kbordermatrix{
                    &       &       &       &       \\
            \v_1    &   a   &   b   &   c   &   d   \\
            \v_2    &   -b  &   a   &   -d  &   c   \\
            \v_3    &   -c  &   d   &   a   &   -b  \\
            \v_4    &   -d  &   -c  &   b   &   a
        }
        \end{align*}\\
    \end{minipage}%
    \begin{minipage}{0.5\textwidth}
        \begin{align*}
        \kbordermatrix{
                    &       &       &       &       \\
            \v_1    &   a   &   b   &   c   &   d   \\
            \v_2    &   -b  &   a   &   d   &   -c  \\
            \v_3    &   -c  &  -d   &   a   &   b   \\
            \v_4    &   -d  &   c   &   -b  &   a
        }
        \end{align*}\\
    \end{minipage}

    The convex hull of
    $\{-1,1\}^4 \cup \{\v_1, \cdots, \v_4, \u_1, \cdots, \u_4\}$
    is a 24-cell if and only if each line segment between a pair $(\v,\v')$ of non-opposite points of
    $\{\v_1, \cdots, \v_4, \u_1, \cdots, \u_4\}$
    intersects the (relative) interior of the $2$-face of $C_4$ defined by $F_{\v} \cap F_{\v'}$,
    where $F_{\v}$ (resp.\ $F_{\v'}$) is the facet of $C_4$ which $\v$ (resp.\ $\v'$) lies beyond.
    Writing up these conditions we see that they are equivalent to the following conditions.
    \[
        b^2, c^2, d^2 = -a^2 + 2a, \quad
        1 < a \leq 2.
    \]
    The above equations can rationally parameterized; Putting $a =  \frac{2}{(x^2+1)}$ gives
    \[
        b,c,d = \pm \frac{2x}{(x^2+1)}, \quad
        0 \leq x < 1.
    \]
    The plus-minus signs are independent, so this corresponds to $2^3 = 8$ families.
    The following matrix describes the vertex description of these families,
    where $s_1, s_2, s_3 \in \{-1,1\}^3$ and $0 \leq x < 1$.
    
    \begin{align*}
        \left[\begin{array}{rrrr}
        -1 & -1 & -1 & -1 \\
        -1 & -1 & -1 & 1 \\
        -1 & -1 & 1 & -1 \\
        -1 & -1 & 1 & 1 \\
        -1 & 1 & -1 & -1 \\
        -1 & 1 & -1 & 1 \\
        -1 & 1 & 1 & -1 \\
        -1 & 1 & 1 & 1 \\
        1 & -1 & -1 & -1 \\
        1 & -1 & -1 & 1 \\
        1 & -1 & 1 & -1 \\
        1 & -1 & 1 & 1 \\
        1 & 1 & -1 & -1 \\
        1 & 1 & -1 & 1 \\
        1 & 1 & 1 & -1 \\
        1 & 1 & 1 & 1 \\
        -\frac{2}{x^{2} + 1} & -\frac{2 \, s_{1} x}{x^{2} + 1} & -\frac{2 \, s_{2} x}{x^{2} + 1} & -\frac{2 \, s_{3} x}{x^{2} + 1} \\
        \frac{2}{x^{2} + 1} & \frac{2 \, s_{1} x}{x^{2} + 1} & \frac{2 \, s_{2} x}{x^{2} + 1} & \frac{2 \, s_{3} x}{x^{2} + 1} \\
        \frac{2 \, s_{1} x}{x^{2} + 1} & -\frac{2}{x^{2} + 1} & \frac{2 \, s_{3} x}{x^{2} + 1} & -\frac{2 \, s_{2} x}{x^{2} + 1} \\
        -\frac{2 \, s_{1} x}{x^{2} + 1} & \frac{2}{x^{2} + 1} & -\frac{2 \, s_{3} x}{x^{2} + 1} & \frac{2 \, s_{2} x}{x^{2} + 1} \\
        \frac{2 \, s_{2} x}{x^{2} + 1} & -\frac{2 \, s_{3} x}{x^{2} + 1} & -\frac{2}{x^{2} + 1} & \frac{2 \, s_{1} x}{x^{2} + 1} \\
        -\frac{2 \, s_{2} x}{x^{2} + 1} & \frac{2 \, s_{3} x}{x^{2} + 1} & \frac{2}{x^{2} + 1} & -\frac{2 \, s_{1} x}{x^{2} + 1} \\
        \frac{2 \, s_{3} x}{x^{2} + 1} & \frac{2 \, s_{2} x}{x^{2} + 1} & -\frac{2 \, s_{1} x}{x^{2} + 1} & -\frac{2}{x^{2} + 1} \\
        -\frac{2 \, s_{3} x}{x^{2} + 1} & -\frac{2 \, s_{2} x}{x^{2} + 1} & \frac{2 \, s_{1} x}{x^{2} + 1} & \frac{2}{x^{2} + 1}
        \end{array}\right]
        \end{align*}

    If $x=0$, we get a regular realization,
    and the Jacobian matrix of the corresponding characteristic map at this realization has rank $140$.
    Otherwise, when $0 < x < 1$, we get a non-regular realization.
    Using \textsc{SageMath}, we were able to show that the Jacobian matrix at the open interval
    $0 < x < 1$ has full rank.
\end{enumerate}
\end{proof}

\begin{corollary}\label{cor:smoothmanifold24}
	There is an open subset of the realization space $\RC{\twentyfourcell}$
	that is a smooth manifold of dimension $48$.
\end{corollary}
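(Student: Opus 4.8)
The plan is to deduce the corollary directly from the Jacobian Criterion (Theorem~\ref{thm:jacobian}) together with the full-rank realizations exhibited in Proposition~\ref{prop:24-cell}(3). Recall that for the 24-cell we have $\NG{\twentyfourcell}=48$ and $f_{0,3}(\twentyfourcell)=144$, so that full row rank of the Jacobian means rank exactly $144$. Proposition~\ref{prop:24-cell}(3) guarantees that at every realization in the open interval $0<x<1$ of the eight one-parameter families constructed there, the Jacobian matrix $J_{\Phi_{\twentyfourcell}}$ attains this full rank $144$. In particular, the set of points of $\RC{\twentyfourcell}$ at which the Jacobian has full rank is nonempty.

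The key observation is that this set is moreover open. First I would recall that the rank of a matrix whose entries depend continuously (here, polynomially) on the point is lower semicontinuous: the vanishing of every $144\times 144$ minor is a closed condition, so the locus where at least one such minor is nonzero is open, and full rank is exactly the statement that some such minor is nonzero. Since the entries of $J_{\Phi_{\twentyfourcell}}$ are polynomial functions of the coordinates $(V,A)$, the set
\[
	U := \{ (V,A)\in\RC{\twentyfourcell} \mid \rank J_{\Phi_{\twentyfourcell}}(V,A) = 144 \}
\]
is open in $\RC{\twentyfourcell}$, and it is nonempty by the previous paragraph.

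Finally I would apply the Jacobian Criterion pointwise. At each $(V_0,A_0)\in U$ the Jacobian has full row rank $144 = f_{0,3}(\twentyfourcell)$, so Theorem~\ref{thm:jacobian} shows that $\RC{\twentyfourcell}$ is, in a neighborhood of $(V_0,A_0)$, a smooth manifold of dimension $\NG{\twentyfourcell}=48$. Since this holds at every point of the open set $U$, the subset $U$ is itself a smooth $48$-dimensional manifold, which is precisely the assertion.

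There is no serious obstacle remaining: the computational heart of the argument, namely verifying that the symbolic Jacobian genuinely has rank $144$ on the whole interval $0<x<1$, has already been carried out in Proposition~\ref{prop:24-cell}(3) using \textsc{SageMath}. The only point that still needs a careful (but entirely routine) justification is the openness of the full-rank locus $U$, which follows from lower semicontinuity of matrix rank; everything else is a direct invocation of the Jacobian Criterion.
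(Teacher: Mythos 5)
Your proposal is correct and follows essentially the same route as the paper: both invoke the Jacobian Criterion (Theorem~\ref{thm:jacobian}) at the full-rank realizations produced in Proposition~\ref{prop:24-cell}(3), which already yields a neighborhood that is a smooth $48$-dimensional manifold. Your additional remark on the openness of the full-rank locus via lower semicontinuity of rank is a harmless elaboration of what the paper leaves implicit.
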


\begin{proof}
	This holds in the neighborhood of the new realizations of Proposition~\ref{prop:24-cell}(3),
	where the Jacobian has full rank.
\end{proof}

However, in contrast to the local situation announced by Corollary~\ref{cor:smoothmanifold24},
there are other points (e.g.\ at the regular realization) where the Jacobian property fails.
The following theorem shows that the $\RC{\twentyfourcell}$ is not smooth at any point that
corresponds to a realization that is (projectively-equivalent to) a regular polytope.

\begin{theorem}\label{thm:c24notsmooth}
	The realization space $\RC{\twentyfourcell}\subset\R^{192}$ is not smooth at 
	any point that corresponds to a realization of the 24-cell as a regular polytope.
\end{theorem}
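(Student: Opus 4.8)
The plan is to reduce to a single explicit regular realization by symmetry, to set up a local model in which the rank deficit of $4$ from Proposition~\ref{prop:24-cell}(1) is concentrated in four equations, and then to detect non-smoothness through a second-order (tangent cone) analysis. First I would record that $GL(4)$ acts on the ambient space $\R^{192}$ by $(V,A)\mapsto (hV,(h^{-1})^tA)$. This action is linear, hence a diffeomorphism of $\R^{192}$, and it preserves $\RC{\twentyfourcell}$ because it preserves every incidence product $\w_i^t\b_j$. Any two labeled regular realizations of the $24$-cell differ by a similarity of $\R^4$, which lies in $GL(4)$, so the regular realizations form a single $GL(4)$-orbit. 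Since smoothness of $\RC{\twentyfourcell}$ at a point is invariant under a global diffeomorphism of the ambient space, it suffices to prove non-smoothness at one regular realization $p_0$ --- conveniently the one obtained as the limit $x\to 0$ of the family in Proposition~\ref{prop:24-cell}(3).

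Next I would build the local model. By Proposition~\ref{prop:24-cell}(1) the Jacobian $J:=J_{\Phi_{\twentyfourcell}}(p_0)$ has rank $140$, so $\dim\ker J=52$. Choosing $140$ linearly independent rows of $J$ and applying the implicit function theorem to the corresponding equations cuts out a smooth $52$-dimensional manifold $M$ through $p_0$ with $T_{p_0}M=\ker J$. The four remaining equations $g_1,\dots,g_4$ have gradients lying in the span of the chosen $140$, so each restriction $g_i|_M$ vanishes at $p_0$ together with its differential; thus, in a chart of $M$ centered at $p_0$ and identifying $T_{p_0}M\cong\R^{52}$, one has $g_i=\tfrac12 Q_i+O(\norm{u}^3)$ for the Hessian quadratic forms $Q_1,\dots,Q_4$, and near $p_0$ the realization space is an open subset of $\{g_1=\dots=g_4=0\}\cap M$.

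Now suppose, for contradiction, that $\RC{\twentyfourcell}$ is a smooth manifold near $p_0$. The non-regular realizations of Proposition~\ref{prop:24-cell}(3) are locally smooth $48$-manifolds by Corollary~\ref{cor:smoothmanifold24} and accumulate at $p_0$ as $x\to0$, so the local dimension of $\RC{\twentyfourcell}$ at $p_0$ is at least $48$; hence the tangent space $T:=T_{p_0}\RC{\twentyfourcell}\subseteq\R^{52}$ is a linear subspace of dimension at least $48$. Differentiating a smooth local parametrization twice and using that each $g_i|_M$ has a critical point at $p_0$ shows that $T$ is totally isotropic for every $Q_i$. The theorem then follows once we show that $Q_1,\dots,Q_4$ admit \emph{no} common totally isotropic subspace of dimension $48$ inside $\R^{52}$.

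I would establish this by computing the four Hessians explicitly (symbolically, as with the other rank computations in this section); it already suffices to exhibit a single $Q_i$ whose signature $(p_i,q_i)$ satisfies $\max(p_i,q_i)\ge 5$, since then even that one form is isotropic on no subspace of dimension exceeding $52-\max(p_i,q_i)\le 47$. The main obstacle is exactly this computation: correctly identifying the four dependent equations and the $52$-dimensional reduction, evaluating the Hessian forms on $\ker J$, and certifying the isotropy bound. Conceptually, what makes $p_0$ singular is that its Zariski tangent space $\ker J$ has the ``too large'' dimension $52$, while the tangent cone is cut out inside it by the nonzero quadrics $Q_i$ and is therefore not a $48$-dimensional plane; the regular realization is thus a genuine singular point even though the fiber $\Phi_{\twentyfourcell}^{-1}(\vec{0})$ looks unobstructed to first order.
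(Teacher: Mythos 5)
Your strategy is genuinely different from the paper's. The paper works at nearby \emph{smooth} points: along each of the eight explicit one-parameter families of Proposition~\ref{prop:24-cell}(3) the Jacobian has full rank, so its kernels give $48$-dimensional tangent spaces, and a symbolic computation shows that their limits as $x\to 0$ yield four \emph{distinct} $48$-planes; a smooth embedded submanifold through the regular realization (necessarily of dimension $48$, by the accumulation argument you also use) would force a unique limit, a contradiction. You instead work at the singular point itself, reducing the $144$ equations to a $52$-dimensional slice $M$ plus four functions $g_i$ that are critical at $p_0$, and aim to show that the Hessians $Q_1,\dots,Q_4$ admit no common $48$-dimensional totally isotropic subspace of $\ker J\cong\R^{52}$. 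This framework is sound: $p_0$ is indeed a critical point of each $g_i|_M$ (its differential is a row of $J$, hence vanishes on $\ker J$), the tangent space of a putative smooth manifold through $p_0$ would be totally isotropic for every $Q_i$ by differentiating $g_i\circ\gamma\equiv 0$ twice, and the bound $52-\max(p_i,q_i)$ on totally isotropic subspaces is correct.

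The gap is that the decisive step is never carried out, and nothing you (or the paper) establish guarantees it would succeed. You need either a single $Q_i$ with $\max(p_i,q_i)\ge 5$ or, failing that, an analysis of the \emph{common} isotropic subspaces of all four quadrics; it is entirely possible a priori that every $Q_i$ has $\max(p_i,q_i)\le 4$, and even that the four quadrics share a $48$-dimensional isotropic subspace, in which case your criterion is silent --- non-existence of such a subspace is sufficient for non-smoothness but not necessary. Note also that the paper's computed data (four distinct $48$-dimensional limit planes) gives you no leverage here: the cone $x^2+y^2-z^2=0$ already shows that limits of tangent planes at nearby smooth points need not be totally isotropic for the Hessian at the singular point, so those limit planes say nothing about your isotropy obstruction. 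As written, the argument therefore reduces the theorem to an unverified computation that might come out the wrong way. A smaller issue: the $GL(4)$-reduction does not cover all regular realizations, since a centered realization of a regular $24$-cell need only contain the origin in its interior, not have its center there; two such realizations differ by an affine (indeed projective) transformation, so you should enlarge the group accordingly (these still act by diffeomorphisms on the relevant open subsets of $\R^{192}$, so the reduction itself survives).
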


\begin{proof}
    For each of the $8$ families we constructed above, we construct the corresponding Jacobian matrix $J_{s_1,s_2,s_3}$ as a function of $x$.
    These Jacobian matrices have full rank in the interval $0 < x < 1$. Thus, their kernels define ($48$-dimensional) tangent spaces along these families.
    We computed the limits of these $8$ (symbolic) tangent spaces at $x=0$, and we 
	obtained that these limits give $4$ different $48$-dimensional subspaces.
    Thus the regular realization, given by $x=0$, is not a smooth point in $\RC{\twentyfourcell}$.

    These computations were done using \textsc{SageMath} \cite{sagemath}.
    To be able to compute the limits, we did the following.
    We computed the bases for the kernels in an echelon form and then we orthogonalized (but not normalized!) the rows of these bases using Gram-Schmidt process. The entries of this echelon form are now rational functions in $x$. This produced rows in the Jacobian matrix with entries that go to infinity as $x$ goes to $0$. Those rows, we multiplied by $x$,
	 after which the entries all became convergent.
\end{proof}

\begin{remark}
	Bates, Hauenstein, Peterson \& Sommese \cite{bates2009} and Wampler, Hauenstein \& Sommese \cite{wampler2011} 
	introduced a local dimension numerical test based on the growth rate of the corank of the Macaulay matrix of the given variety after adding to it some number of random linear equations passing through the point at which we want to compute the local dimension. We used this test and we got that
    \begin{itemize}
        \item the local dimension of the realization space of the 24-cell at the regular realization is at most $50$, and
        \item the local dimension of the realization space of the 24-cell at a Paffenholz realization is at most $49$.
    \end{itemize}
\end{remark}

\section{Comparison to Other Models}
In what follows, we discuss various other models for realization spaces of polytopes and how they compare to the centered realization space. In particular, we argue that the results from \Cref{subsec:24cell} translate to these other models.

The most naive model perhaps is to record the vertex coordinates of the realization, which leads to the set $\{V\in \R^{d\times n}\colon \conv(V) \text{ realizes } P\}$, see Definition \ref{def:RS}. 
This was, for example, used in Mn\"ev's original statement of the Universality Theorem for polytopes \cite{Mnev1988}.
The centered realization space is diffeomorphic to an open subset of the naive model, and the action of the affine group acts transitively on the naive model. So this naive approach is essentially the same as the centered realization space model. One might want to factor out the action of the affine group or related transformation group actions. Several different ways to do this have been proposed in the literature and they lead to slightly different models for the realization space of a polytope, see 
Gouveia, Macchia \& Wiebe~\cite{gouveia2020combining}.

\subsection{Realization spaces modulo transformation group actions}

We begin with the model favored by Richter-Gebert in his work on the universality theorem for $4$-polytopes \cite{Jurgen1996,Z45}. Here, we factor out affine transformations by fixing points, which by the combinatorial structure of the polytope have to be affinely independent in every realization, to be the origin and the standard basis vectors. The following proposition tells us how we can identify this model explicitly with a subset of the centered realization space (of lower dimension).

\begin{proposition}\label{prop:rgaffinetrans}
    Let $P$ be a $d$-dimensional polytope. Let $\mathcal{RG}(P)$ be the realization space of~$P$ in Richter-Gebert's model, which fixes vertices $\v_0,\v_1,\ldots,\v_d$ of~$P$ to be $\vec{e}_0 = \vec{0}$, $\vec{e}_1$, \ldots, $\vec{e}_d$.
    Let $\x_0,\x_1,\ldots,\x_d$ be affinely independent vectors in $\R^d$. Richter-Gebert's model is diffeomorphic to the space of realizations of~$P$ with $\v_i = \x_i$. 
\end{proposition}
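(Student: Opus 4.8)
The plan is to exhibit the diffeomorphism explicitly as the action of a single affine transformation of $\R^d$ on the vertices of a realization. Both $\mathcal{RG}(P)$ and the target are subsets of the vertex-coordinate realization space $\RS{P}$ (Definition~\ref{def:RS}): the former consists of those $V$ with $\v_i = \vec{e}_i$ for $0 \le i \le d$, the latter of those $V$ with $\v_i = \x_i$ for $0 \le i \le d$, while in both cases the remaining vertices are free. First I would observe that, since $\vec{e}_0 = \vec{0}, \vec{e}_1, \ldots, \vec{e}_d$ are affinely independent and, by hypothesis, so are $\x_0, \x_1, \ldots, \x_d$, there is a \emph{unique} affine isomorphism $T \colon \R^d \to \R^d$ with $T(\vec{e}_i) = \x_i$ for every $i$. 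Concretely, writing $T(\y) = M\y + \x_0$, the condition $T(\vec{e}_i) = \x_i$ forces the $i$-th column of $M$ to equal $\x_i - \x_0$; linear independence of $\x_1 - \x_0, \ldots, \x_d - \x_0$ (which is equivalent to affine independence of the $\x_i$) makes $M$ invertible, so $T$ is a bijective affine map with affine inverse $T^{-1}$.

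Next I would use the fact that affine isomorphisms preserve the combinatorial type of a polytope: if $\conv(V)$ realizes $P$, then so does $\conv(T\cdot V)$, where $T \cdot V$ denotes the result of applying $T$ to each column of $V$. Because $T$ sends $\vec{e}_i$ to $\x_i$, applying $T$ columnwise carries a realization with the first $d+1$ vertices pinned at $\vec{e}_0, \ldots, \vec{e}_d$ to a realization with those same vertices pinned at $\x_0, \ldots, \x_d$, and it maps the free vertices to free vertices. This defines a map $\Psi \colon \mathcal{RG}(P) \to \{V \mid \conv(V)\text{ realizes }P,\ \v_i = \x_i\}$, $\Psi(V) = T\cdot V$, which lands in the target by construction.

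Finally I would check that $\Psi$ is a diffeomorphism. It is the restriction of the global affine map $V \mapsto T\cdot V$ on $\R^{d\times n}$, hence smooth; applying $T^{-1}$ columnwise gives a smooth inverse $\Psi^{-1}$ that carries the target back into $\mathcal{RG}(P)$, again by affine invariance of the combinatorial type. Since the pinned vertices are determined in both models, on the free-vertex coordinates $\Psi$ is an invertible affine map with smooth inverse, so it is a diffeomorphism. (This also records that the dimension is the same and that the natural semi-algebraic structure is preserved.)

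There is no serious obstacle here: the entire content is that the affine group acts on the vertex-coordinate realization space preserving the combinatorial type, and that this action is simply transitive on ordered affinely independent $(d+1)$-tuples (affine frames). The only points requiring a word of care are the well-definedness of $\Psi$ --- that $T\cdot V$ really still \emph{realizes}~$P$ --- and the uniqueness of $T$, both of which are immediate from affine independence and the affine invariance of face lattices.
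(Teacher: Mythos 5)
Your proposal is correct and follows essentially the same route as the paper: both construct the unique affine transformation $\x\mapsto M\x+\x_0$ sending $\vec{e}_i$ to $\x_i$ (invertible because affine independence of the $\x_i$ makes the linear part $M$ of full rank) and let it act columnwise to induce the diffeomorphism. You simply spell out a few details the paper leaves implicit, namely that affine maps preserve the face lattice and that the inverse map is again smooth.
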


\begin{proof}
    Let $A$ be the linear map that maps $\vec{e}_i$ to $\x_i-\x_0$. This linear transformation has full rank. Therefore, the affine transformation $\x\mapsto A\x+\x_0$ that maps $\vec{e}_i$ to $\x_i$ is invertible. This map induces a diffeomorphism of the described realization spaces.
\end{proof}

A recent way to encode realizations of a polytope is the \emph{slack realization space} introduced 
by {Gouveia}, {Macchia}, {Thomas} \& {Wiebe} \cite{zbMATH07105541}. Essentially, the authors show that realizations of a polytope correspond to $n\times m$ matrices of rank $d+1$ (whose rows are indexed by vertices and columns by facets) with nonnegative entries,
where zero entries appear only at the positions that correspond to vertex-facet incidences. In this model it is particularly easy to interpret and analyze the quotients modulo transformation groups. The connection of the slack model to Richter-Gebert's model discussed above as well as to the point of view of chirotopes (or oriented matroids with the same face lattice as the polytope) are explored in Gouveia, Macchia \& Wiebe~\cite{gouveia2020combining}. All different models modulo transformation groups are at least \emph{birational}, which is to say isomorphic on an open subset (as subsets of algebraic varieties, so in particular also locally diffeomorphic wherever the map is defined).

For instance, if $P\subset \R^d$ be a $d$-dimensional polytope whose first $d+1$ vertices are affinely independent in any realization of $P$, Richter-Gebert's model fixes the first $d+1$ vertices of the polytope to be $\vec{0}$, $\vec{e}_1$, \ldots, $\vec{e}_d$. In the Grassmannian model of the realization space, a realization $V$ of $P = \conv(V)$ is mapped to the column space of $\wh{V}$. By the choice of the first $d+1$ columns of $V$, the Plücker vector of such a realization always ends up in the same canonical affine chart of the Grassmannian given by the first $(d+1)\times(d+1)$ block of $\wh{V}$ having full rank (so that the corresponding entry in the Plücker vector is non-zero, more precisely $1$). So Richter-Gebert's model is naturally a subset of the Grassmannian model of realizations of $\wh{P}$. The slack realization space is birational to the Grassmannian model by \cite[Theorem~4.7]{gouveia2020combining}. 

We discuss how to translate smoothness results from the centered realization space model to these quotient models exemplarily for the $24$-cell in the following section.

\subsection{The 24-cell in other models for the realization space}

\begin{proposition}\label{prop:24cellRG}
    There is an open neighborhood of the regular realization of the 24-cell in $\mathcal{RG}(\twentyfourcell)$ that is diffeomorphic to a transversal affine section of $\RC{\twentyfourcell}$. In particular, the regular realization is also a singular point in $\mathcal{RG}(\twentyfourcell)$.
\end{proposition}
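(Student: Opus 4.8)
The plan is to realize $\mathcal{RG}(\twentyfourcell)$ as a transversal slice of the affine-group action on the naive realization space $\RS{\twentyfourcell}$, and then to transport both the slice structure and the non-smoothness statement of Theorem~\ref{thm:c24notsmooth} between the three models $\RC{\twentyfourcell}$, $\RS{\twentyfourcell}$, and $\mathcal{RG}(\twentyfourcell)$ using the local diffeomorphisms recorded in Remark~\ref{remark:Rh_and_RC} and Proposition~\ref{prop:rgaffinetrans}. The upshot is a local product decomposition $\RS{\twentyfourcell}\cong\mathrm{Aff}(\R^4)\times\mathcal{RG}(\twentyfourcell)$, under which smoothness of the ambient space and of the slice are equivalent.

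First I would set up the action. The affine group $\mathrm{Aff}(\R^4)$, of dimension $d(d+1)=20$, acts on $\RS{\twentyfourcell}$ by $V\mapsto MV+\vec{t}\vec{1}^t$, and I would check that this action is free near the regular realization: fixing $d+1=5$ vertices $\v_0,\ldots,\v_4$ that are affinely independent at the regular realization (hence in a neighborhood), any affine map fixing an entire realization must fix these $5$ affinely independent points and is therefore the identity. Choosing the target positions $\x_0,\ldots,\x_d$ to be the regular-realization vertices, the corresponding slice is the affine section $\{V\in\RS{\twentyfourcell}\mid \v_i=\x_i,\ 0\le i\le d\}$; by Proposition~\ref{prop:rgaffinetrans} this slice is diffeomorphic to $\mathcal{RG}(\twentyfourcell)$ (which pins the same vertices to $\vec{0},\vec{e}_1,\ldots,\vec{e}_d$).

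The key step is the local product structure, obtained from an explicit trivialization. Let $g_V\in\mathrm{Aff}(\R^4)$ be the unique affine map carrying $\vec{e}_0:=\vec{0},\vec{e}_1,\ldots,\vec{e}_d$ to $\v_0(V),\ldots,\v_d(V)$; this $g_V$ depends rationally (hence smoothly) on $V$ wherever those $5$ columns are affinely independent, so $\widetilde\Psi\colon V\mapsto(g_V,\,g_V^{-1}V)$ is a diffeomorphism of ambient open sets onto $\mathrm{Aff}(\R^4)\times L$, with smooth inverse $(g,W)\mapsto gW$, where $L=\{W\mid \v_i(W)=\vec{e}_i\}$. Since $\RS{\twentyfourcell}$ is affine-invariant, $\widetilde\Psi$ restricts to a diffeomorphism carrying $\RS{\twentyfourcell}$ onto $\mathrm{Aff}(\R^4)\times\mathcal{RG}(\twentyfourcell)$. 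The point is then the biconditional that a product $N\times S$ with $N$ a smooth manifold is a smooth submanifold near $(g_0,s_0)$ exactly when $S$ is a smooth submanifold near $s_0$: the forward direction is immediate, and for the reverse one slices $N\times S$ by the affine subspace $\{g_0\}\times\R^M$, which is transversal to $N\times S$ at $(g_0,s_0)$ because the $N$-directions $\R^p\oplus\{0\}$ already lie in $T(N\times S)$, so the transversal intersection $\{g_0\}\times S$ is smooth.

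Finally I would transfer and conclude. By Remark~\ref{remark:Rh_and_RC}, the map $(V,A)\mapsto V$ is a local diffeomorphism $\RC{\twentyfourcell}\cong\RS{\twentyfourcell}$, so the slice above corresponds to the transversal affine section $\{(V,A)\in\RC{\twentyfourcell}\mid \v_i=\x_i,\ 0\le i\le d\}$ of the centered model — transversal to the $20$-dimensional orbit foliation transported to $\RC{\twentyfourcell}$ — establishing the first assertion. By Theorem~\ref{thm:c24notsmooth}, $\RC{\twentyfourcell}$, and hence $\RS{\twentyfourcell}$, is not smooth at the regular realization; by the product decomposition and the biconditional above, $\mathcal{RG}(\twentyfourcell)$ is therefore not smooth there either. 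The main obstacle is not the dimension count but making the slice identification a genuine restriction of ambient diffeomorphisms, so that \emph{non-smoothness}, rather than merely matching local dimensions, actually transfers: concretely, one must verify freeness of the action near the regular realization (affine independence of the chosen $5$ vertices) and justify the transversal-slice step that turns smoothness of the product into smoothness of the factor.
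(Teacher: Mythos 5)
Your proposal is correct, and for the key \emph{in particular} assertion it takes a genuinely different route from the paper. Both arguments agree on the setup: pin the vertices $\x_0,\dots,\x_4$ at their positions in the regular (centered) realization, so that \Cref{prop:rgaffinetrans} identifies $\mathcal{RG}(\twentyfourcell)$ near that point with the affine section $\RC{\twentyfourcell}\cap L$. The difference is in how non-smoothness is pushed into the section. The paper does it by computation: it transports the four curves of \Cref{thm:c24notsmooth} into the slice, intersects the four limiting $48$-dimensional tangent spaces with $\mathrm{lin}(L)$, and checks that this yields four \emph{distinct} $28$-dimensional subspaces --- a verification that is genuinely needed in that approach, since a priori the four distinct limits in $\R^{192}$ could all meet $\mathrm{lin}(L)$ in the same subspace. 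You avoid this computation entirely via the explicit trivialization $V\mapsto(g_V,\,g_V^{-1}V)$, which exhibits $\RS{\twentyfourcell}$ locally as a product of $\mathrm{Aff}(\R^4)$ with the slice; smoothness of the slice would then force smoothness of the product, hence of $\RS{\twentyfourcell}$ and (via the graph map $V\mapsto(V,A(V))$) of $\RC{\twentyfourcell}$, contradicting \Cref{thm:c24notsmooth}. Your route is more structural, needs only the easy direction of your biconditional, and in fact makes rigorous the paper's informal appeal to ``generic smoothness of the quotient map''; what it does not deliver is the extra quantitative output of the paper's computation, namely the four distinct limiting $28$-dimensional tangent planes of the slice itself. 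One small inconsistency to fix: the trivialization is written with $L=\{W\mid \v_i(W)=\vec{e}_i\}$, a slice that cannot meet $\RC{\twentyfourcell}$ because a centered polytope cannot have $\vec{0}$ as a vertex; for the stated identification with a transversal affine section of $\RC{\twentyfourcell}$ you must, as you do only in your final paragraph, take $L=\{W\mid \v_i(W)=\x_i\}$ through the regular realization and define $g_V$ accordingly.
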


\begin{proof}
    Choose the vectors $\x_0 = (-1,1,1,1)$, $\x_1 = (1,1,1,1)$, $\x_2 = (0,2,0,0)$, $\x_3 = (0,0,2,0)$, $\x_4 = (0,0,0,2)$. These are affinely independent so that \Cref{prop:rgaffinetrans} implies that $\mathcal{RG}(\twentyfourcell)$ is diffeomorphic to all realizations of the 24-cell such that $5$ vertices have the above coordinates, which we call $\mathcal{RG}'(\twentyfourcell)$. These are chosen as they are vertices of a regular realization. 
There is an open neighborhood of this regular realization in $\mathcal{RG}'(\twentyfourcell)$ that lies in $\RC{\twentyfourcell}$. This neighborhood is an affine section $\RC{\twentyfourcell}\cap L$ of the centered realization space determined by the affine conditions that the $5$ vertices $\v_7, \v_{15}, \v_{19}, \v_{21}, \v_{23}$ are equal to $\x_0, \cdots, \x_4$ respectively. We can now show that claim by a computation. We consider the same four curves as in \Cref{thm:c24notsmooth} in $\RC{\twentyfourcell}$ that approach the regular realization. We transform them into curves in $\mathcal{RG}'(\twentyfourcell)$ by the affine transformation moving the five chosen vertices to the fixed $x_i$. Sufficiently close to the regular realization (given by the parameter value $0$), these transformed curves in $\mathcal{RG}'(\twentyfourcell)$ pass through smooth points in $\mathcal{RG}'(\twentyfourcell)$ by generic smoothness of the quotient map. The tangent space to $\mathcal{RG}'(\twentyfourcell)$ is a $28$-dimensional linear space depending on $m$. The limit for $m=0$ can be computed by intersecting the limits of the tangent spaces of the original curves inside $\RC{\twentyfourcell}$ with the linear space ${\rm lin}(L)$ corresponding to the affine subspace $L$, which we compute to be a $28$-dimensional subspace. In fact, as in the proof of \Cref{thm:c24notsmooth}, we obtain four different $28$-dimensional subspaces, which shows that $\mathcal{RG}'(\twentyfourcell)$, and therefore $\mathcal{RG}(\twentyfourcell)$, is not smooth at the regular realization.
\end{proof}

The fact that the realization space of the 24-cell is not a smooth manifold in Richter-Gebert's model locally around a regular realization also shows that it is not a smooth manifold in the Grassmannian model and therefore neither in the slack model (see~\cite{gouveia2020combining}) because the transition maps between the models are defined locally around the regular realizations.

% \bibliographystyle{siam}
% \bibliography{references}
\end{document}